\numberwithin{equation}{section}
\begin{document}
\newcommand\A{\mathbb{A}}
\newcommand\C{\mathbb{C}}
\newcommand\G{\mathbb{G}}
\newcommand\N{\mathbb{N}}
\newcommand\T{\mathbb{T}}
\newcommand\B{\mathcal{B}}
\newcommand\sO{\mathcal{O}}
\newcommand\sE{{\mathcal{E}}}
\newcommand\tE{{\mathbb{E}}}
\newcommand\sF{{\mathcal{F}}}
\newcommand\sG{{\mathcal{G}}}
\newcommand\GL{{\mathrm{GL}}}
\newcommand\Ham{\mathbb{H}}
\newcommand\HH{{\mathrm H}}
\newcommand\mM{{\mathrm M}}
\newcommand\fS{\mathfrak{S}}
\newcommand\fP{\mathfrak{P}}
\newcommand\fQ{\mathfrak{Q}}
\newcommand\Qbar{{\bar{\Q}}}
\newcommand\sQ{{\mathcal{Q}}}
\newcommand\sP{{\mathbb{P}}}
\newcommand{\Q}{\mathbb{Q}}
\newcommand{\tH}{\mathbb{H}}
\newcommand{\Z}{\mathbb{Z}}
\newcommand{\F}{\mathbb{F}}
\newcommand\gP{\mathfrak{P}}
\newcommand\Lim{\mathrm{Lim}}
\newcommand\Sup{\mathrm{Sup}}
\newcommand\Max{\mathrm{Max}}
\newcommand\Gal{{\mathrm {Gal}}}
\newcommand\SL{{\mathrm {SL}}}
\newcommand\Hom{{\mathrm {Hom}}}
\newcommand{\legendre}[2] {\left(\frac{#1}{#2}\right)}
\newcommand\iso{{\> \simeq \>}}
\newcommand{\f}{\matcal{f}}
\def\g{\mathfrak{g}}
\def\k{\mathfrak{k}}
\def\z{\mathfrak{z}}
\def\h{{\mathfrak h}}
\def\gl{\mathfrak{gl}}

\def\Ext{{\rm Ext}}
\def\Hom{{\rm Hom}}
\def\Ind{{\rm Ind}}
\def\Sym{{\rm Sym}}
\def\Coh{{\rm Coh}}

\def\GL{{\rm GL}}
\def\SO{{\rm SO}}
\def\O{{\rm O}}

\def\R{\mathbb{R}}
\def\C{\mathbb{C}}
\def\Z{\mathbb{Z}}
\def\Q{\mathbb{Q}}
\def\A{\mathbb{A}}

\def\w{\wedge}

\def\Cat{\mathcal{C}}
\def\HC{{\rm HC}}
\def\HCat{\Cat^\HC}
\def\proj{{\rm proj}}

\def\to{\rightarrow}
\def\To{\longrightarrow}

\def\1{1\!\!1}
\def\dim{{\rm dim}}

\def\th{^{\rm th}}
\def\isom{\approx}

\def\CE{\mathcal{C}\mathcal{E}}
\def\cE{\mathcal{E}}
\def\cO{\mathcal{O}}
\def\cP{\mathcal{p}}
\def\cI{\mathcal{I}}

\newtheorem{thm}{Theorem}
\newtheorem{theorem}[thm]{Theorem}
\newtheorem{cor}[thm]{Corollary}
\newtheorem{conj}[thm]{Conjecture}
\newtheorem{prop}[thm]{Proposition}
\newtheorem{lemma}[thm]{Lemma}
\theoremstyle{definition}
\newtheorem{definition}[thm]{Definition}
\newtheorem{remark}[thm]{Remark}
\newtheorem{example}[thm]{Example}
\newtheorem{claim}[thm]{Claim}

\newtheorem{lem}[thm]{Lemma}

\newtheorem{dfn}{Definition}

\theoremstyle{remark}
\newtheorem*{fact}{Fact}
\makeatletter
\def\imod#1{\allowbreak\mkern10mu({\operator@font mod}\,\,#1)}
\makeatother

\title{$p$-adic $L$-functions for $\GL_n$}
\author{\bf Debargha Banerjee \ \ \& \ \ A. Raghuram}
\address{Department of Mathematics\\ Indian Institute of Science Education and Research, Pashan, Pune Maharashtra 411021, India.} 
\email{debargha@iiserpune.ac.in}
\email{raghuram@iiserpune.ac.in}
\date{\today}
\begin{abstract} 
These are the expanded notes of a mini-course of four lectures by the same title given in the workshop ``$p$-adic aspects of modular forms" held at IISER Pune, in June, 2014. 
We give a brief introduction of  $p$-adic $L$-functions attached to certain types of  
automorphic forms on ${\rm GL}_n$ with the specific aim to understand 
the $p$-adic symmetric cube $L$-function attached to cusp forms on $\GL_2$ over rational numbers. 
\end{abstract}

\subjclass[2010]{Primary: 11F67, Secondary: 11F70, 11F75, 22E55}
\keywords{Modular symbols; special values of $L$-functions; distributions and measures; $p$-adic $L$-functions}
\maketitle
 
\setcounter{tocdepth}{1}
\tableofcontents{}

The aim of this survey article is to bring together some known constructions of the $p$-adic $L$-functions associated to cohomological, cuspidal
automorphic representations on $\GL_n/\Q$. In particular, we wish to briefly recall the various approaches to construct $p$-adic $L$-functions 
with a focus on the construction of the $p$-adic $L$-functions for the $\Sym^3$ transfer of a cuspidal automorphic representation $\pi$ of $\GL_2/ \Q$. 
We note that $p$-adic $L$-functions for modular forms or automorphic representations are defined using $p$-adic measures. 
In almost all cases, these $p$-adic measures are constructed using the fact that the $L$-functions 
have integral representations, for example as suitable Mellin transforms. 
Candidates for distributions corresponding to automorphic forms can be written down using such integral representations of the $L$-functions at the critical points.  The well-known Prop.\,\ref{distributions} is often used to prove that they are indeed distributions, which is usually a consequence of the defining relations of the Hecke operators. Boundedness of these distributions are shown by proving certain finiteness or integrality properties, giving the sought after $p$-adic measures.  

\smallskip

In Sect.\,\ref{sec:p-adic-l-fn}, we discuss general notions concerning $p$-adic $L$-functions, including our working definition of 
what we mean by a $p$-adic $L$-function.  As a concrete example, we discuss the construction of the 
$p$-adic $L$-functions that interpolate critical values of $L$-functions attached to modular forms. Manin~\cite{Manin} and Mazur and Swinnerton-Dyer~\cite{Mazur} discovered how to construct those $p$-adic measures by defining 
a distribution such that  (1) it takes value in $\overline{\Q}$, and (2) these take value in a finite generated $\Z_p$-module. 
The last condition will ensure that these distributions are indeed  $p$-adic measures. 

\smallskip

In Sect.\,\ref{sec:sym-powers}, we discuss some basic facts about Langlands principle of functoriality, focusing 
mainly on the $\Sym^n$ transfer of an automorphic representation of $\GL_2/\Q$ giving an automorphic representation of $\GL_{n+1}/\Q$. We approach $L$-functions attached to $\Sym^3$ transfer of automorphic representations 
via instances of Langlands functoriality.  

\smallskip

In Sect.\,\ref{sec:lfnGL4}, we study the $p$-adic $L$-functions for cuspidal automorphic representation for $\GL_4/\Q$ that admit a so-called `Shalika model,' following the exposition of Ash and Ginzburg \cite{ash-ginzburg}.
(The reader is also referred to a forthcoming article by Dimitrov, Januszewski and the second author
\cite{dimitrov-januszewski-r}.) The symmetric cube transfer of a cuspidal representation $\pi$ of $\GL_2/\Q$ is a representation of $\GL_4/\Q$, whose standard degree four $L$-function is the symmetric cube $L$-function, and to which the results of \cite{ash-ginzburg} are applicable.

\smallskip

In Sect.\,\ref{sec:gl3-gl2}, we discuss $p$-adic $L$-functions for $\GL_3 \times \GL_2/ \Q.$  
We construct $p$-adic $L$-functions for the $\Sym^3$ transfer of a cuspidal representation $\pi$ of $\GL_2/\Q$ 
as a quotient of the $p$-adic $L$-function for $\GL_3 \times \GL_2$ applied to $\Sym^2(\pi) \times \pi$, and 
the $p$-adic $L$-function for $\GL_2$ attached to $\pi.$  This method produces the symmetric cube $p$-adic 
$L$-function in the quotient field of the Iwasawa algebra. We hope to get an element of the Iwasawa algebra, corresponding to the $\Sym^3$ transfer of automorphic representations $\pi$ of $\GL_2/ \Q;$ see the discussion in 
Sect.\,\ref{sec:sym-3-II}. We end the introduction by pointing to a tantalizing possibility that one can get 
$p$-adic $L$-functions for symmetric cube transfers using the integral representations of symmetric cube 
$L$-functions in Bump, Ginzburg and Hoffstein~\cite{Bump}.

\bigskip

\section{What is a $p$-adic $L$-function?}
\label{sec:p-adic-l-fn}
We follow the exposition in \cite{PollackDuke} to define $p$-adic $L$-functions. Fix an odd prime $p$ and an embedding 
$i_p:\overline{\Q} \rightarrow \C_p=\widehat{\overline{\Q}}_p$. The field $\C_p$ is called the Tate field. 
Fix a valuation $v_p$ on the Tate field and let $O_p$ be the ring of integers of $\C_p$.
We also fix an embedding  $i_{\infty}: \overline{\Q} \rightarrow \C$.

\medskip
\subsection{\bf The weight space $X_p$}
Let $X_p$ be the set of continuous homomorphisms $ \Z_p^{\times} \rightarrow \C_p^{\times}$, i.\,e.\,, 
\[
X_p=\Hom_{\rm{Cont}}(\Z_p^{\times}, \C_p^{\times}).
\]
We call $X_p$ the weight space. The elements of $X_p$ are called $p$-adic characters. Recall, 
we have $\Z_p^{\times}=(\Z/p\Z)^{\times} \times (1+p \Z_p)$. 
A character is said to be tame if it is trivial on $1+p \Z_p$ and it is called wild if the character is trivial on $(\Z/p\Z)^{\times}$. 
Every character can be uniquely written as $\chi=\chi_t \cdot \chi_w$ with $\chi_t$ tame and $\chi_w$ wild.

\begin{lemma}
\label{opendisc}
The weight space $X_p$ can be identified with a disjoint union of $p-1$ copies of 
the open unit disc $\B := \{u \in \C_p \ | \ |u-1|_p<1 \}$ of $\C_p.$
\end{lemma}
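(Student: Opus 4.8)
The plan is to use the decomposition $\Z_p^\times = (\Z/p\Z)^\times \times (1 + p\Z_p)$ to split any continuous character $\chi \in X_p$ as $\chi = \chi_t \cdot \chi_w$, exactly as recalled just before the statement, and then to parametrise the tame and wild parts separately. For the tame part: a continuous homomorphism $(\Z/p\Z)^\times \to \C_p^\times$ is just a homomorphism from a cyclic group of order $p-1$, so it is determined by where a fixed generator goes, and that image must be a $(p-1)$st root of unity in $\C_p^\times$. Since $p \neq p$ in $\C_p$ (the residue characteristic of $\C_p$ is $p$, and $\gcd(p-1,p)=1$), there are exactly $p-1$ such roots of unity, hence exactly $p-1$ tame characters. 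This accounts for the ``$p-1$ copies'' in the statement.

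Next I would identify the wild characters $\chi_w : 1+p\Z_p \to \C_p^\times$ with the open unit disc $\B$. The key structural fact is that $1 + p\Z_p$ is a free pro-$p$ group on one generator, topologically generated by $1+p$ (here one uses $p$ odd, so that $1+p\Z_p \cong \Z_p$ as a topological group via $(1+p)^s \mapsto s$). Given any $u \in \B$, i.e.\ $|u - 1|_p < 1$, one defines $\chi_w$ by sending $1+p$ to $u$, and extends continuously: for $a \in 1+p\Z_p$ write $a = (1+p)^s$ with $s \in \Z_p$ and set $\chi_w(a) = u^s := \lim_n u^{s_n}$ where $s_n \in \Z$ with $s_n \to s$. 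Convergence of this limit is precisely the condition $|u^p - 1|_p < 1$, which follows from $|u-1|_p < 1$ by the ultrametric inequality since $u^p - 1 = (u-1)^p + p\cdot(\text{integer combination of lower powers of }u-1)$, each term of which has absolute value $< 1$. Conversely, any continuous $\chi_w$ is determined by $u := \chi_w(1+p)$, and continuity of $\chi_w$ at $1$ forces $|u^{p^n} - 1|_p \to 0$, which in turn forces $|u-1|_p < 1$, so $u \in \B$. This sets up a bijection between wild characters and $\B$; one checks it is a homeomorphism by noting that on both sides the topology is that of uniform convergence / the $p$-adic metric, and that $\chi_w \mapsto \chi_w(1+p)$ and its inverse are continuous.

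Putting the two pieces together: the map $X_p \to \{(p-1)\text{st roots of unity}\} \times \B$ sending $\chi \mapsto (\chi_t(g), \chi_w(1+p))$ for a fixed generator $g$ of $(\Z/p\Z)^\times$ is a bijection, and it is a homeomorphism because $X_p$ carries the compact-open topology and $\Z_p^\times$ is topologically finitely generated (by $g$ and $1+p$), so convergence in $X_p$ is convergence of the images of these two generators. Since the finite set of $(p-1)$st roots of unity is discrete, the right-hand side is exactly a disjoint union of $p-1$ copies of $\B$, which is the claim.

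The main obstacle, and the only point requiring genuine care, is the construction and continuity of the wild character attached to a point $u \in \B$ — that is, verifying that $u^s = \lim_n u^{s_n}$ is well-defined for $s \in \Z_p$ and depends continuously on both $s$ and $u$. This rests on the estimate $|u^p - 1|_p \le \max(|u-1|_p^p,\, |p|_p\,|u-1|_p) < 1$, after which an induction gives $|u^{p^n}-1|_p \to 0$ and hence Cauchy-ness of $(u^{s_n})$; here the hypothesis that $p$ is odd is what guarantees $1+p\Z_p$ is procyclic and torsion-free, so no root-of-unity subtleties intrude on the wild side. Everything else — the count of tame characters, the disjointness, the homeomorphism assertion — is then formal.
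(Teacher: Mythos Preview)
Your argument is correct and follows essentially the same route as the paper's proof: both use the tame/wild factorisation of characters, count the $p-1$ tame characters, and parametrise the wild characters by $\B$ via evaluation at a topological generator of $1+p\Z_p$. Your version is in fact more careful than the paper's in justifying why $u^s$ is well-defined for $s\in\Z_p$ and why continuity forces $\chi_w(1+p)\in\B$; the paper simply asserts the latter. One small slip: the phrase ``since $p\neq p$ in $\C_p$'' is garbled --- presumably you meant that $p-1$ is coprime to the residue characteristic $p$, so that $\C_p$ (being algebraically closed of characteristic $0$) contains exactly $p-1$ distinct $(p-1)$st roots of unity.
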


\begin{proof}
Fix a topological generator $\gamma$ of $1+p\Z_p$. For $u \in \C_p^{\times}$ with 
$|u-1|<1$ define a particular wild character $\chi_u \in X_p$ as 
\begin{equation}
\label{tamewild}
\Z_p^{\times} \rightarrow 1+p\Z_p \rightarrow \C_p^{\times},
\end{equation}
where the first map sends the topological generator $\gamma$ to $1+p$ and the second map sends\,$1+p$ 
to $u$. The set $\{\chi_u| \, u \in \C_p, \, |u-1|_p<1 \}$ is the set of all wild characters,
since the continuity of a character $\chi$ requires that $|\chi(\gamma)-1|_p<1$. 
Let $\psi$ be a tame character on $\Z_p^{\times}$ . The mapping $u \rightarrow \psi \chi_u$
identifies the open unit disc of $\C_p$ with the set of characters on $\Z_p^{\times}$
with tame part equal to $\psi$. Since there are only $p-1$ distinct tame characters on $\Z_p^{\times}$, we have that
$X_p$ is a union of as many copies of $\B.$
\end{proof}

We list some properties of $X_p$ which are relevant for this article. 
\begin{itemize}
 \item 
 The set $X_p$ is a group under pointwise multiplication. 
 \item 
 The torsion subgroup of $X_p$ is exactly the set of all Dirichlet characters of $p$-power conductor.
 \item 
 $X_p$ has the structure of a $p$-adic Lie group. 
 \item 
 $X_p$ contains the $p$ components of all adelic characters $\chi=\prod_{l \leq \infty} \chi_l: \Q^{\times} \backslash \A^{\times} \rightarrow \C^{\times}$, 
 which are of $p$-power conductor and of finite order ($\chi_{\infty}=\1$ or $\chi_{\infty}=$ sgn). 
\end{itemize}
We give some examples of $p$-adic characters.
\begin{enumerate}
\item
The characters of the form 
\[
x^j \chi(x)
\]
where $j$ is an integer, and $\chi(x)$ is a character of finite order.

\item 
For $x \in \Z_p^{\times}$, we write $x=\omega(x) <x>$ with $\omega(x)$ a $(p-1)$ root of unity and $<x>$ lies in $1+p \Z_p$. For $s \in \Z_p$, we 
define 
\[
\chi_s(x)=<x>^s=\sum_{r=0}^{\infty} \frac{s^r}{r!} (\log<x>)^r.
\]
\end{enumerate}

A $p$-adic $L$-function is a $p$-adic analytic function 
$L_p:X_p \rightarrow \C_p$ that 
interpolates the {\it algebraic parts of the complex values} of some $L$-function associated to an automorphic representation (see \S\,\ref{automorphic}) or a motive (see \S\,\ref{Motive}).
The $p$-adic $L$-function attached to an automorphic representation $\pi$ will be denoted by $L_{p, \pi}$ and the $p$-adic $L$-function attached to a motive $M$ will be  denoted by $L_{p,M}$.
For a Dirichlet character $\psi$, the value of $L_{p, \pi}$ at the special elements of $X_p$ of the form $\chi_{k,\psi}:x_p \rightarrow \psi(x) x_p^k$ coincides with the algebraic parts of the special $L$-values
of $\pi \otimes \psi$ at the integer $k$. 
A $p$-adic function is analytic if it is given by power series with $p$-adic coefficients on copies of the 
unit disc of $\C_p$.

\medskip
\subsection{\bf \texorpdfstring{$p$}{X}-adic measures}
\label{measures}
We will now define  $p$-adic distributions and $p$-adic measures. Let $X$ be a compact, open subset of $\Q_p$ such as $\Z_p$ or $\Z_p^{\times}$. 
A $p$-adic distribution $\mu$ on $X$ 
is a continuous linear map from the $\C_p$ vector space $C^\infty(X, \C_p)$ of locally constant functions on 
$X$ to $\C_p$, which we write as: 
$$
\mu \ \in \ {\rm Hom}_{\C_p}(C^\infty(X, \C_p), \, \C_p).
$$
If $f$ is a locally constant function then $\mu(f)$ is also denoted $\int_X f d\mu$. Equivalently, a $p$-adic distribution $\mu$ on $X$ is an additive 
map from the set of compact, open subsets of $X$ to $\C_p$.  
The following proposition (see, for example, Koblitz~\cite[II.3]{Koblitz}) is 
very effective in constructing distributions. 

\begin{prop}
\label{distributions}
An interval is a set of the form $a+p^n \Z_p.$ A map $\mu$ from the set of intervals of $X$ to $\Q_p$, which satisfies the equality 
\[
\mu(a+p^n \Z_p) \ = \ \sum_{b=0}^{p-1} \mu(a+b p^n +p^{n+1} \Z_p)
\]
for $a+p^n \Z_p \subset X$, extends uniquely to a $p$-adic distribution on $X$.  
\end{prop}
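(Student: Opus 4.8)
The plan is to extend $\mu$ in two stages: first from intervals to all compact open subsets of $X$, as a finitely additive set function, and then from characteristic functions of such sets to all locally constant functions, by $\C_p$-linearity. Two elementary facts about intervals in $\Z_p$ drive everything. First, any two intervals $a+p^n\Z_p$ and $a'+p^m\Z_p$ are either disjoint or nested, one inside the other. Second, every compact open $U\subseteq X$ is a finite disjoint union of intervals: $U$ being open contains an interval around each of its points, $U$ being compact is covered by finitely many of them, and nesting lets one replace these by disjoint ones. Iterating the hypothesized relation gives, for any interval $I=a+p^n\Z_p$ and any $m\ge n$, the identity $\mu(I)=\sum_{I'\subseteq I}\mu(I')$ where $I'$ runs over the $p^{m-n}$ level-$m$ subintervals of $I$. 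Given a decomposition $U=I_1\sqcup\dots\sqcup I_r$ into intervals, I would then set $\mu(U):=\sum_{j=1}^r\mu(I_j)$.

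The heart of the argument --- and the step I expect to take the most care --- is showing this value does not depend on the chosen decomposition. Given two decompositions $U=\bigsqcup_j I_j=\bigsqcup_k I'_k$, I would pick a level $m$ so large that every $I_j$ and every $I'_k$ has level at most $m$, and refine both into the finite collection of all level-$m$ intervals contained in $U$. By the iterated additivity relation, $\sum_j\mu(I_j)$ and $\sum_k\mu(I'_k)$ each equal $\sum\mu(I'')$ over exactly that common collection of level-$m$ intervals, so they agree. The same refinement argument shows finite additivity: if $U=V\sqcup W$ with $V,W$ compact open, splicing together interval decompositions of $V$ and of $W$ yields one of $U$, whence $\mu(U)=\mu(V)+\mu(W)$.

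Finally I would pass to functions. A locally constant $f\colon X\to\C_p$ on the compact set $X$ takes finitely many values and so can be written $f=\sum_j c_j\1_{I_j}$ for finitely many disjoint intervals $I_j$ covering $X$; put $\mu(f):=\sum_j c_j\mu(I_j)$, equivalently $\int_X f\,d\mu$. Independence of the representation of $f$ again reduces, by refining to a common level, to the finite additivity of $\mu$ just established, and $\C_p$-linearity of $f\mapsto\mu(f)$ is then formal. Continuity for the natural topology on $C^\infty(X,\C_p)$ is automatic, since it is a locally convex direct limit of finite-dimensional spaces on which every linear functional is continuous; so $\mu$ is a $p$-adic distribution on $X$. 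Uniqueness is immediate: the functions $\1_{a+p^n\Z_p}$ span $C^\infty(X,\C_p)$ over $\C_p$, so any distribution extending $\mu$ is forced to be the one constructed.
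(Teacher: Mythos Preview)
Your argument is correct and is the standard one. The paper itself does not give a proof of this proposition; it only states the result and refers the reader to Koblitz~\cite[II.3]{Koblitz}, where essentially the same argument appears: refine two interval decompositions of a compact open set to a common level, invoke the iterated distribution relation to see that both sums coincide with the sum over the level-$m$ subintervals, and then extend by $\C_p$-linearity to all of $C^\infty(X,\C_p)$.
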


Following Vishik~\cite{Vishik} and Amice-Velu~\cite{AmiceVelu}, we define $h$-admissible 
measures.

\begin{definition}[$h$-admissible measure]
Let $C^h(\Z_p^{\times})$ be the space of functions $f:\Z_p^{\times} \rightarrow \C_p$ which are locally given by polynomials of degree at most
$h$. Let $C^{la}$ be the $\Z_p$-module of all locally analytic functions and  $C(\Z_p^{\times})$ be the space of all continuous functions. 
We have  inclusions:  
\[
 C^1(\Z_p^{\times}) \subset \cdots  \subset C^h (\Z_p^{\times}) \subset \cdots \subset C^{la}(\Z_p^{\times}) \subset  C(\Z_p^{\times}). 
\]
Let $\chi_X$ be the characteristic function of the set $X$. 
An $h$-admissible measure $\mu$ on $\Z_p^{\times}$ is a continuous linear map $\mu:C^h(\Z_p^{\times}) \rightarrow \C_p$ such that 
\[
|\mu((x-a)^i \chi_{a+p^n \Z_p})|=O(p^{e(h-i)})
\]
for $0 \leq i \leq h$ and $e$ tends to infinity. 
\end{definition}

\begin{thm}[\cite{Vishik}]
 An $h$-admissible measure $\mu$ extends to a linear map on the space of all locally analytic
functions on $\Z_p^{\times}$.
\end{thm}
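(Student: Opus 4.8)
The plan is to realise the extension as the limit of ``Riemann sums'' in which, on each small residue disc, one integrates against $\mu$ the degree-$\le h$ truncation of the local Taylor expansion of $f$ --- this truncation lies in $C^h(\Z_p^{\times})$, so $\mu$ is already defined on it --- and then to use the $h$-admissibility bound to show these sums converge as the discs shrink. Concretely, fix $f\in C^{la}(\Z_p^{\times})$. By compactness of $\Z_p^{\times}$ there is an integer $n_0$ such that for every $n\ge n_0$ and every $a$ the restriction $f|_{a+p^n\Z_p}$ is given by a power series $f(x)=\sum_{j\ge 0}c^{(n,a)}_j(x-a)^j$ converging on $a+p^{n_0}\Z_p$; re-expanding from the finitely many discs of a fixed covering one gets a single sequence $\delta_j\to 0$ with $|c^{(n,a)}_j|_p\le \delta_j\,p^{\,n_0 j}$ for all such $n,a$. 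For $n\ge n_0$ I would set
\[
\mu_n(f)\;:=\;\sum_{a}\;\sum_{j=0}^{h} c^{(n,a)}_j\,\mu\big((x-a)^j\,\chi_{a+p^n\Z_p}\big),
\]
the sum over $a$ running over $\Z_p^{\times}\bmod p^n$.

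The heart of the argument is to prove that $(\mu_n(f))_{n\ge n_0}$ is Cauchy in $\C_p$. I would refine each disc $a+p^n\Z_p$ into the $p$ sub-discs $a+bp^n+p^{n+1}\Z_p$, use the additivity of $\mu$ (i.e. $\chi_{a+p^n\Z_p}=\sum_b\chi_{a+bp^n+p^{n+1}\Z_p}$ and linearity), and expand $(x-a)^j=\big((x-a-bp^n)+bp^n\big)^j$; the point is that the degree-$\le h$ part of $f$ re-centred at $a+bp^n$ is exactly what enters $\mu_{n+1}(f)$, so that only the Taylor tail $j>h$ survives:
\begin{multline*}
\mu_{n+1}(f)-\mu_n(f)\\
=\;\sum_{a,\,b}\;\sum_{k=0}^{h}\Big(\sum_{j>h}c^{(n,a)}_j\tbinom{j}{k}(bp^n)^{j-k}\Big)\,\mu\big((x-a-bp^n)^k\,\chi_{a+bp^n+p^{n+1}\Z_p}\big).
\end{multline*}
Then I estimate term by term: $|(bp^n)^{j-k}|_p\le p^{-n(j-k)}$, $|c^{(n,a)}_j|_p\le \delta_j p^{\,n_0 j}$, and $h$-admissibility gives $|\mu((x-a')^k\chi_{a'+p^{m}\Z_p})|_p=O(p^{(h-k)m})$ for $0\le k\le h$. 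Multiplying, with $m=n+1$, the resulting exponent of $p$ is $n_0 j+n(h-j)+(h-k)$, which --- since $j>h$ forces the coefficient of $j$ to be $n_0-n<0$ once $n>n_0$ --- is maximised at $j=h+1,\ k=0$, giving $|\mu_{n+1}(f)-\mu_n(f)|_p=O(p^{\,n_0(h+1)+h-n})=O(p^{-n})$; because $\C_p$ is ultrametric the outer sums over $a$ and $b$ cost nothing. Hence the sequence is Cauchy.

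Since $\C_p$ is complete, $\int_{\Z_p^{\times}}f\,d\mu:=\lim_{n\to\infty}\mu_n(f)$ exists. I would then dispatch the routine points: the limit is independent of $n_0$ and of the choice of disc representatives (it is a tail of the same sequence); $f\mapsto\int f\,d\mu$ is $\C_p$-linear; and it restricts to the given $\mu$ on $C^h(\Z_p^{\times})$, because for $f$ locally polynomial of degree $\le h$ one has $c^{(n,a)}_j=0$ for all $j>h$ once $n$ is large, so $\mu_n(f)=\mu(f)$ for those $n$. The same estimates show the functional is continuous for the natural inductive-limit topology on $C^{la}(\Z_p^{\times})$; as the locally polynomial functions are dense there, the extension is unique.

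The step I expect to be the main obstacle is exactly the Cauchy estimate: one must track the exponents carefully and see that the admissibility growth $p^{(h-k)m}$ --- which on its own makes $\mu$ unbounded --- is outweighed by the decay $p^{-n(j-k)}$ coming from the re-centring factors $(bp^n)^{j-k}$, and this works precisely because only indices $j>h$ contribute, i.e. it is the exponent ``$h-i$'' in the definition of $h$-admissibility that does the job. A secondary, more technical point is the uniform control $|c^{(n,a)}_j|_p\le \delta_j p^{\,n_0 j}$ of the local Taylor coefficients across the infinitely many residue discs, which reduces to analyticity of $f$ on a single finite covering and hence to compactness of $\Z_p^{\times}$.
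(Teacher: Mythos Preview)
The paper does not supply its own proof of this theorem; it is stated with a citation to Vishik and no argument is given. Your proposal is correct and is essentially the classical argument of Vishik (and, independently, Amice--V\'elu): approximate $f$ on each residue disc by its Taylor jet of order $\le h$, feed these jets into $\mu$, and show the resulting Riemann sums $\mu_n(f)$ are Cauchy by re-centring and observing that only Taylor terms of degree $j>h$ survive in $\mu_{n+1}(f)-\mu_n(f)$. Your exponent bookkeeping is right --- the key point being that the admissibility growth $p^{(h-k)(n+1)}$ is beaten by the decay $p^{-n(j-k)}$ from the factors $(bp^n)^{j-k}$ precisely because $j>h$ --- and this is exactly the mechanism Vishik isolates. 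The auxiliary points (uniform control of Taylor coefficients via compactness, linearity, agreement on $C^h$, uniqueness by density) are handled correctly.

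One cosmetic remark: in your bound $|c^{(n,a)}_j|_p\le \delta_j\,p^{n_0 j}$, the re-centring actually gives $\sup_{l\ge j}\delta_l$ rather than $\delta_j$, but since $\delta_j\to 0$ this makes no difference after replacing $\delta_j$ by $\tilde\delta_j:=\sup_{l\ge j}\delta_l$.
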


Let $K$ be a finite extension of $\Q_p$ and let $\mu$ be a 
$K$-valued measure on $\Z_p^{\times}$. We wish to understand how we can integrate functions with 
respect to this measure. Let $R_m$ be a system of representatives from $(\Z_p \slash p^m \Z_p)^{\times}$ in $\Z_p^{\times}$,
and let $f : \Z_p^{\times} \rightarrow K$ be a function. Consider the
``Riemann sum"
\[
S(f;R_m)=\sum_{b \in R_m} f(b) \mu(b+p^m \Z_p).
\]

The following fundamental theorem is due to Manin \cite{Manin}.
\begin{thm}
\label{Manin}
There exists a unique limit
\[
\lim \, S(f, R_m):=\int_{\Z_p^{\times}} f d \mu,
\]
taken over all $R_m$ as $m$ tends to $\infty$, provided that the following conditions are satisfied
\begin{itemize}
 \item 
 The measure $\mu$ is of moderate growth; that is, by definition,
\[
\epsilon_m=\mathrm{{Max}_b} \, |\mu(b+p^m \Z_p)| p^{-m} \rightarrow 0
\]
as $m \rightarrow \infty$.
\item 
 The function $f$ satisfies ``Lipschitz condition'', i.e., there exists a constant $C$
such that if $b \equiv b' \pmod {p^m}$ then 
\[
 |f(b)-f(b')| < Cp^{-m},
\]
as $m \rightarrow \infty$.
\end{itemize}
\end{thm}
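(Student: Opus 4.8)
The plan is to prove convergence of the Riemann sums $S(f;R_m)$ by the Cauchy criterion, using that $K$, being a finite extension of $\Q_p$, is complete and non-archimedean, so that a sequence $(a_m)$ in $K$ converges as soon as $|a_{m+1}-a_m|\to 0$. The structural inputs are: the additivity of $\mu$, which lets us refine a level-$m$ sum into a level-$(m+1)$ sum; the moderate-growth hypothesis, which gives $|\mu(b+p^m\Z_p)|\le \epsilon_m p^m$ with $\epsilon_m\to 0$; the Lipschitz condition on $f$; and, crucially, the ultrametric inequality, by which the total error in a sum of $\phi(p^m)$ terms is at most the maximum of the individual errors rather than their sum. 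This last point is exactly what makes a bound of size $p^m$ on $|\mu(b+p^m\Z_p)|$ compatible with a bound of size $p^{-m}$ coming from the Lipschitz estimate.

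First I would compare two Riemann sums at the same level $m$. If $R_m$ and $R_m'$ are two systems of representatives, then for each $b\in R_m$ there is a unique $b'\in R_m'$ with $b\equiv b'\pmod{p^m}$, and $b+p^m\Z_p=b'+p^m\Z_p$ as sets, so $\mu$ takes equal values on them. Hence
\[
S(f;R_m)-S(f;R_m')\;=\;\sum_{b\in R_m}\bigl(f(b)-f(b')\bigr)\,\mu(b+p^m\Z_p),
\]
and the ultrametric inequality together with the Lipschitz and moderate-growth bounds gives
\[
|S(f;R_m)-S(f;R_m')|\;\le\;\max_{b}|f(b)-f(b')|\cdot|\mu(b+p^m\Z_p)|\;<\;Cp^{-m}\cdot\epsilon_m p^m\;=\;C\epsilon_m\;\longrightarrow\;0.
\]

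Next I would compare consecutive levels. Given $R_{m+1}$, let $R_m$ consist of the reductions modulo $p^m$ of the elements of $R_{m+1}$, and for $c\in R_{m+1}$ write $b(c)\in R_m$ for the class of $c$ modulo $p^m$. By additivity of $\mu$, $\mu(b+p^m\Z_p)=\sum_{c:\,b(c)=b}\mu(c+p^{m+1}\Z_p)$, so regrouping the level-$m$ sum along the fibres of $c\mapsto b(c)$ yields
\[
S(f;R_m)-S(f;R_{m+1})\;=\;\sum_{c\in R_{m+1}}\bigl(f(b(c))-f(c)\bigr)\,\mu(c+p^{m+1}\Z_p).
\]
Since $b(c)\equiv c\pmod{p^m}$, the Lipschitz condition gives $|f(b(c))-f(c)|<Cp^{-m}$, while moderate growth gives $|\mu(c+p^{m+1}\Z_p)|\le\epsilon_{m+1}p^{m+1}$; by the ultrametric inequality, $|S(f;R_m)-S(f;R_{m+1})|<Cp\,\epsilon_{m+1}\to 0$.

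Finally I would combine the two estimates to handle an arbitrary sequence of systems $(R_m)$: writing $R_m'$ for the reduction of $R_{m+1}$ modulo $p^m$, we have
\[
S(f;R_m)-S(f;R_{m+1})\;=\;\bigl(S(f;R_m)-S(f;R_m')\bigr)+\bigl(S(f;R_m')-S(f;R_{m+1})\bigr),
\]
where the first summand is $O(\epsilon_m)$ by the same-level comparison and the second is $O(\epsilon_{m+1})$ by the consecutive-level comparison; hence $|S(f;R_{m+1})-S(f;R_m)|\to 0$ and $(S(f;R_m))$ converges by completeness of $K$. The same-level estimate $|S(f;R_m)-S(f;R_m')|<C\epsilon_m$ then shows the limit does not depend on any of the choices, and we define $\int_{\Z_p^\times}f\,d\mu$ to be this common value. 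The only delicate point is the bookkeeping of choices — the $R_m$ are selected independently at each level, so one passes through the auxiliary compatible system $R_m'$ to link the same-level and consecutive-level comparisons — and the repeated appeal to the ultrametric inequality to prevent the $\sim p^m$ summands from accumulating, which is precisely the mechanism that reconciles moderate growth with Lipschitz decay.
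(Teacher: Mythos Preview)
The paper does not supply its own proof of this theorem: it is stated as a result due to Manin with a citation, and the exposition immediately moves on. So there is nothing to compare against, and your argument stands on its own merits.

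Your proof is correct and is essentially the standard one. The key ingredients are exactly those you isolate: additivity of $\mu$ to pass between levels, the moderate-growth bound $|\mu(b+p^m\Z_p)|\le \epsilon_m p^m$, the Lipschitz bound $|f(b)-f(b')|<Cp^{-m}$, and the ultrametric inequality to control a sum of $\phi(p^m)$ terms by the maximum term. The two-step scheme (same-level comparison, then consecutive-level comparison via a compatible auxiliary system $R_m'$) is clean and gives both existence and independence of the limit from the choice of representatives.

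One small point of wording: when you write ``let $R_m$ consist of the reductions modulo $p^m$ of the elements of $R_{m+1}$'', the reductions are elements of $(\Z/p^m\Z)^\times$, not of $\Z_p^\times$, and moreover each class mod $p^m$ is hit by $p$ elements of $R_{m+1}$. What you mean is: for each residue class in $(\Z/p^m\Z)^\times$, choose one element of $R_{m+1}$ lying over it, and let $R_m'$ be the resulting set; then $b(c)$ is the chosen representative congruent to $c$ modulo $p^m$. With that clarification the fibre-regrouping identity and the estimate $|S(f;R_m')-S(f;R_{m+1})|<Cp\,\epsilon_{m+1}$ go through exactly as you wrote.
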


We note that the set of locally constant functions on $\Z_p$ are dense in the set of continuous functions on 
$\Z_p$. 
A $p$-adic distribution is called a $p$-adic measure if it is bounded, i.e., if there is a real number $N$ such that 
$|\mu(U)| \leq N$ for all compact, open subsets $U$ of $X$.

\medskip
\subsection{\bf \texorpdfstring{$p$}{X}-adic $L$ functions}
Kubota and Leopoldt first constructed 
$p$-adic meromorphic functions that interpolate {\it special values} of Riemann zeta function and more generally special values of Dirichlet $L$-functions. 
The existence of these meromorphic functions is equivalent to congruences of (generalized) Bernoulli numbers. 
An integer $k$ can be viewed as a character $x_p^k : x \rightarrow  x^k$. The
construction of Kubota and Leopoldt is equivalent to the existence of a $p$-adic
analytic function $\zeta_p : X_p \rightarrow  \C_p$ with a single pole at the point $x=x_p^{-1},$  which
$p$-adically becomes a bounded holomorphic function (given by power series) on $X_p$ after multiplication by the
elementary factor $(x_p x - 1) (x \in  X_p )$, and is uniquely determined by the
interpolation property 
\[
 \zeta_p(x_p^k)=(1-p^k) \zeta(-k). 
\]
The $p$-adic $\zeta$-function is constructed by defining a $p$-adic measure on $\Z_p^{\times}$ 
with values in $\Z_p$ such that 
\[
\int_{\Z_p^{\times}} x_p^k d \mu=(1-p^k)\zeta(-k).
\]
(See, for example, Koblitz~\cite[II.6]{Koblitz}.)

\begin{definition}[$p$-adic $L$-functions]
A $p$-adic measure $\mu$ on $\Z_p^\times$ gives a $p$-adic $L$-function $L_{p,\mu} : X_p \to \C_p$ 
whose value on a character $\chi \in X_p$ is given by: 
\[
L_{p,\mu}(\chi)=\int_{\Z_p^{\times}}\chi d \mu.
\]
\end{definition}


\medskip
\subsection{\bf \texorpdfstring{$p$}{X}-adic measures on \texorpdfstring{$\Z_p^{\times}$}{X} and power series}
\label{powerseries}
The following theorem of Manin~\cite{Manin} gives an explicit connection between bounded measures and elements of the Iwasawa algebra. 

\begin{thm}
 Let $\mu$ be a $K$-valued measure on $\Z_p^\times$ of moderate growth (see Thm.\,\ref{Manin}). 
 For each tame character $\chi_{\tt t}$ of $\Z_p^\times$ there is a unique power series 
 $g_{\mu, \chi_{\tt t}}  \in K[[T]]$ that is convergent for any specialization of $T \in p\Z_p$, such that 
 for all $\chi \in X_p$ we have 
 \[
L_{p,\mu}(\chi) \ = \ g_{\mu, \chi_0}(\chi_1(1+p)-1). 
\]
where $\chi_0$ (resp., $\chi_1$) is the tame (resp., wild) component of $\chi.$ 
\end{thm}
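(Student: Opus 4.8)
The plan is to identify the assignment $\chi\mapsto L_{p,\mu}(\chi)$ with an Amice (Mahler) transform after separating the tame and wild variables: in effect, to realize $K[[T]]$ as the algebra of bounded $K$-valued measures on $1+p\Z_p$ and to read the sought power series off the Mahler coefficients of $\mu$. Throughout write $\Z_p^\times=(\Z/p\Z)^\times\times(1+p\Z_p)$ and $x=\omega(x)\langle x\rangle$ as in the text, so that $\chi=\chi_0\chi_1$ with $\chi_0$ tame and $\chi_1$ wild; since $p$ is odd, $1+p$ is a topological generator of $1+p\Z_p$.

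First I would reduce to $1+p\Z_p$. For a fixed tame character $\chi_0$, define a $K$-valued distribution $\mu_{\chi_0}$ on $1+p\Z_p$ by integrating against $\mu$ the locally constant function $x\mapsto\chi_0(\omega(x))\,f(\langle x\rangle)$ on $\Z_p^\times$, for each locally constant $f$ on $1+p\Z_p$. Additivity is immediate from linearity of $\mu$, so $\mu_{\chi_0}$ is a distribution by Prop.~\ref{distributions}; and since $\chi_0$ takes values in roots of unity, $\mu_{\chi_0}$ is bounded by the same constant as $\mu$, hence is again a measure of moderate growth. Unwinding the definitions, $L_{p,\mu}(\chi)=\int_{\Z_p^\times}\chi\,d\mu=\int_{1+p\Z_p}\chi_1\,d\mu_{\chi_0}$, which reduces the theorem to the case of a bounded measure on $1+p\Z_p$.

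Next I would transport $\mu_{\chi_0}$ to a bounded measure $\nu$ on $\Z_p$ along the topological isomorphism $\Z_p\xrightarrow{\;\sim\;}1+p\Z_p$, $a\mapsto(1+p)^a$. Under this isomorphism a wild character $\chi_1$ pulls back to the continuous $\C_p^\times$-valued character of $\Z_p$ given by $a\mapsto(1+T_{\chi_1})^a:=\sum_{n\ge0}\binom{a}{n}T_{\chi_1}^n$, where $T_{\chi_1}:=\chi_1(1+p)-1$ satisfies $|T_{\chi_1}|_p<1$ by continuity of $\chi_1$ (as in the proof of Lemma~\ref{opendisc}); this series converges and defines a continuous function of $a$ because $\binom{a}{n}\in\Z_p$ and $|T_{\chi_1}|_p<1$, and it extends $n\mapsto(1+T_{\chi_1})^n$ from $\Z$ to $\Z_p$. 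I then set
\[
g_{\mu,\chi_0}(T)\ :=\ \int_{\Z_p}(1+T)^a\,d\nu(a)\ =\ \sum_{n\ge0}c_n\,T^n,\qquad c_n:=\int_{\Z_p}\binom{a}{n}\,d\nu(a),
\]
the second equality being term-by-term integration of $(1+T)^a=\sum_n\binom{a}{n}T^n$, which by Mahler's theorem converges uniformly in $a\in\Z_p$. Since $\binom{a}{n}\in\Z_p$ and $\nu$ is bounded, say $|\nu(U)|_p\le N$ for all compact open $U$, the Mahler coefficients satisfy $|c_n|_p\le N$; hence $g_{\mu,\chi_0}\in K[[T]]$ and the series converges for every $T$ with $|T|_p<1$, in particular for every $T\in p\Z_p$, as claimed. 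The interpolation property then follows by specialization: for $\chi=\chi_0\chi_1$ and $T_{\chi_1}=\chi_1(1+p)-1$,
\[
L_{p,\mu}(\chi)\ =\ \int_{1+p\Z_p}\chi_1\,d\mu_{\chi_0}\ =\ \int_{\Z_p}(1+T_{\chi_1})^a\,d\nu(a)\ =\ g_{\mu,\chi_0}\big(\chi_1(1+p)-1\big).
\]
For uniqueness, observe that as $\chi_1$ ranges over all wild characters the values $\chi_1(1+p)-1$ range over the whole open unit disc of $\C_p$, in particular over all of $p\Z_p$; so if $h\in K[[T]]$ converges on $p\Z_p$ and agrees with $g_{\mu,\chi_0}$ at every such value, then $h-g_{\mu,\chi_0}$ is a power series convergent on $p\Z_p$ that vanishes on all of $p\Z_p$. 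Substituting $T=pt$ turns it into a power series in $t$ whose coefficients tend to $0$ and which vanishes on all of $\Z_p$, so by Strassmann's theorem it is identically zero; hence $h=g_{\mu,\chi_0}$.

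The one step that needs genuine care is the term-by-term integration defining $g_{\mu,\chi_0}$: it combines Mahler's theorem — the binomial polynomials $\binom{a}{n}$ form an orthonormal Banach basis of $C(\Z_p,\C_p)$, so the binomial expansion of $(1+T)^a$ converges uniformly in $a$ — with the boundedness of $\mu$, and it is exactly here that the hypothesis ``$\mu$ is a measure of moderate growth'' is used, guaranteeing that the output is a genuine element of $K[[T]]$ convergent on $p\Z_p$ rather than a merely formal power series. Everything else is bookkeeping with the splitting $\Z_p^\times\cong(\Z/p\Z)^\times\times(1+p\Z_p)$ together with the standard rigidity of $p$-adic power series used in the uniqueness step.
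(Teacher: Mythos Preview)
The paper does not actually supply a proof of this theorem; it is stated with attribution to Manin, followed only by the one-line remark that $\chi_1(1+p)-1\in p\Z_p$ so that the right-hand side makes sense. There is therefore nothing in the paper to compare your argument against step by step.

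That said, your approach is the standard one and is essentially what the paper is setting the reader up for in the surrounding sections: split off the tame part to reduce to a measure on $1+p\Z_p$, transport to $\Z_p$ along $a\mapsto(1+p)^a$, and then recognize the resulting power series as the Amice transform of \S1.6 (Def.~\ref{Amice} and Lem.~\ref{inte}). Your uniqueness argument via Strassmann is clean and correct.

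One point worth tightening: you slide between ``moderate growth'' and ``bounded,'' and the key estimate $|c_n|_p\le N$ uses boundedness, not moderate growth. In the paper's own conventions this is harmless, since a \emph{measure} is by definition a bounded distribution (end of \S\ref{measures}), so the phrase ``measure of moderate growth'' is redundant and your use of a uniform bound $N$ is justified. But if one reads the hypothesis as genuinely allowing an unbounded distribution of moderate growth in the sense of Thm.~\ref{Manin}, then the Mahler-coefficient bound $|c_n|_p\le N$ is no longer available and a more delicate estimate would be needed even to get convergence on $p\Z_p$. It would strengthen your write-up to state explicitly which reading you adopt.
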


It is easy to see that $\chi_1(1+p)-1$ lies in $p\Z_p$ and so the right hand side is convergent.

\medskip
\subsection{\bf Relations between $h$-admissible measures and power series with bounded growth}
Following~\cite{Vishik} and~\cite{AmiceVelu}, we recall the relation between $h$-admissible $p$-adic measures 
and $p$-adic power series of bounded growth. 
Recall, the open disc $\B = \{u \in \C_p \ | \ |u-1|_p<1 \}.$  Suppose $f$ is an analytic function 
on $\B$ with the Taylor series expansion around $1$ given by 
$f(X)=\sum_{n \geq 0} b_n (X-1)^n$. 

\begin{definition}
[Modulus function]
We define the modulus function of $f$ to be 
$$
M_f(r) \ = \ \Sup_{|x-1|=r}|f(x)| \ = \ 
\Max_n \, |b_n r^n|.
$$
\end{definition}

\begin{definition}
[Big $O$ and small $o$ for $p$-adic analytic functions]
Suppose $f$ and $g$ be two $p$-adic analytic functions on $\B$, we say that 
\begin{enumerate}
\item $f=O(g)$ if 
$\lim_{r \rightarrow 1^{-}} \frac{M_f(r)}{M_g(r)}$ is finite as $r \rightarrow 1$, and 
\item $f=o(g)$ if they satisfy the stronger condition 
$\lim _{r \rightarrow 1^{-}}\frac{M_f(r)}{M_g(r)} = 0$. 
\end{enumerate}
\end{definition}

For example, if $g(X)=\log_p(X)^k$ and $f(X)=\sum_{n \geq 0} b_n (X-1)^n$ then $f=o(g)$ 
if and only if $|b_n|=o(n^k)$. 
For function $f$ and $g$ analytic on $X_p$, we say $f=O(g)$ or $f=o(g)$ if on each of the component 
isomorphic to $\B$, the functions $f$ and $g$ have the property.

\medskip
\subsection{\bf \texorpdfstring{$p$}{X}-adic measures on \texorpdfstring{$\Z_p$}{X} and power series}
In this section, we explore the connection between 
$p$-adic measures on $\Z_p$ and various power series ring \cite{Lang}. 
Measures on $\Z_p$ give rise to measures on $\Z_p^{\times}$ by restriction. On the other hand, measures 
on $\Z_p^{\times}$ produce measures on $\Z_p$ by first restricting to $1+p\Z_p$ and then via 
the identification of $1+p\Z_p$ with $\Z_p$.

Recall, a measure $\mu$ on $\Z_p$ is a bounded linear functional on the $\C_p$-vector space 
$\C(\Z_p, \C_p)$ of all continuous $\C_p$-valued functions on $\Z_p,$ i.e., 
there exists a constant $B > 0$ satisfying $|\mu(f)| < B |f|$ for all $f \in  C(\Z_p,\C_p)$.
The smallest possible $B$ is called the norm of the measure $\mu$ and is denoted $||\mu||_p$. 
With this norm, the set $M(\Z_p,\C_p)$ of measures on $\Z_p$ becomes a $\C_p$-Banach space.

Let $\C_p\{\{T\}\}$ be the $\C_p$-algebra of power series whose coefficients are in $\C_p$ and are bounded with respect to $v_p$. Define the norm  $\C_p\{\{T\}\}$ as the maximum of the absolute values of the coefficients. This is also a 
$\C_p$-Banach space. The {\it Amice transform} gives an isometry between these two Banach spaces, which we now proceed to describe.

\begin{definition}
\label{Amice}
(Amice Transforms)
The Amice transform of $\mu \in M(\Z_p,\C_p)$ is the power series 
\[
A_{\mu}(T) \ := \ \sum_{n=0}^{\infty} (\int_{\Z_p} {x \choose n} d\mu(x)) \, T^n \ = \ \int_{\Z_p} (1+T)^x d \mu(x). 
\]
\end{definition}
In the other direction, given a power series $F  = \sum_{n \geq  0} F_n T^n \in  \C_p\{\{T\}\}$ 
define $\mu_F$ on the `binomial coefficient functions' via: 
 \[
\int_{\Z_p} {x \choose n} d\mu_F \ = \ F_n. 
\]
Using a well-know theorem due to Mahler, one can show that this uniquely determines the measure 
$\mu_F$.

\begin{prop}
The map $\mu \rightarrow A_{\mu}$ is an isometry from $M(\Z_p,\C_p)$ to $\C_p\{\{T\}\}$.
\end{prop}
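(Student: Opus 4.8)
The plan is to show that $\mu \mapsto A_\mu$ is a bijection which preserves norms; combined with the obvious linearity this gives the isometry. First I would verify that $A_\mu$ actually lands in $\C_p\{\{T\}\}$, i.e.\ that the coefficients $\int_{\Z_p}\binom{x}{n}\,d\mu$ are bounded in $n$. This is immediate: the binomial coefficient functions $x \mapsto \binom{x}{n}$ are continuous on $\Z_p$ with sup-norm equal to $1$ (they take integer values, and on $\Z_p$ take values in $\Z_p$), so $|\int_{\Z_p}\binom{x}{n}\,d\mu| \le \|\mu\|_p \cdot \bigl\|\binom{\cdot}{n}\bigr\| = \|\mu\|_p$ for every $n$. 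Hence $A_\mu \in \C_p\{\{T\}\}$ and already $\|A_\mu\| \le \|\mu\|_p$.

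The reverse inequality, and simultaneously the surjectivity, is where Mahler's theorem does the work, and this is the main point of the argument. Mahler's theorem asserts that every $f \in C(\Z_p,\C_p)$ has a unique uniformly convergent expansion $f(x) = \sum_{n\ge 0} a_n(f)\binom{x}{n}$ with $a_n(f) \to 0$, and moreover $\|f\| = \sup_n |a_n(f)|$; the coefficients are given by the finite differences $a_n(f) = (\Delta^n f)(0)$. Granting this, for any measure $\mu$ and any $f \in C(\Z_p,\C_p)$ we have, by continuity and linearity of $\mu$, that $\mu(f) = \sum_{n\ge 0} a_n(f)\,\int_{\Z_p}\binom{x}{n}\,d\mu = \sum_{n\ge 0} a_n(f)\,F_n$ where $F_n$ is the $n$-th coefficient of $A_\mu$. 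This shows first that $\mu$ is determined by $A_\mu$ (injectivity), and second that $|\mu(f)| \le \bigl(\sup_n|F_n|\bigr)\cdot\sup_n|a_n(f)| = \|A_\mu\|\cdot\|f\|$, whence $\|\mu\|_p \le \|A_\mu\|$. Together with the previous paragraph this gives $\|\mu\|_p = \|A_\mu\|$, the isometry property.

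For surjectivity, given $F = \sum_{n\ge 0}F_n T^n \in \C_p\{\{T\}\}$ with $|F_n| \le B$ for all $n$, I would define $\mu_F$ on a continuous function $f$ with Mahler expansion $f = \sum a_n(f)\binom{\cdot}{n}$ by the formula $\mu_F(f) := \sum_{n\ge 0} a_n(f)\,F_n$. The series converges in $\C_p$ because $a_n(f) \to 0$ while the $F_n$ are bounded, and the estimate $|\mu_F(f)| \le B\,\|f\|$ shows $\mu_F$ is a bounded linear functional, i.e.\ a genuine measure, with $\|\mu_F\|_p \le B$. By construction $\int_{\Z_p}\binom{x}{n}\,d\mu_F = F_n$, so $A_{\mu_F} = F$. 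This is exactly the ``$\mu_F$'' constructed before the proposition, and it realizes $F$ as the image of a measure, completing surjectivity.

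The one genuine obstacle is Mahler's theorem itself — both the convergence/uniqueness of the Mahler expansion and the identity $\|f\| = \sup_n|a_n(f)|$ — but since the excerpt explicitly invokes ``a well-known theorem due to Mahler'' for exactly this purpose, I would simply cite it (e.g.\ from \cite{Lang} or \cite{Koblitz}) rather than reprove it. Everything else is the interchange of $\mu$ with an infinite sum, justified by uniform convergence of the Mahler expansion together with continuity of $\mu$, and the two one-sided norm estimates, both of which follow from the ultrametric inequality applied termwise.
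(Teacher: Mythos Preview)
The paper does not actually prove this proposition: it is stated without a \texttt{proof} environment, and the only argument offered is the sentence preceding it, which says that Mahler's theorem shows the prescription $\int_{\Z_p}\binom{x}{n}\,d\mu_F = F_n$ uniquely determines $\mu_F$. Your proposal supplies exactly the details the paper omits, and along the same line the paper gestures at: you use that $\|\binom{\cdot}{n}\|=1$ to get $\|A_\mu\|\le\|\mu\|_p$, then invoke Mahler's expansion $f=\sum a_n(f)\binom{\cdot}{n}$ with $\|f\|=\sup_n|a_n(f)|$ to obtain the reverse inequality and injectivity, and finally define $\mu_F(f)=\sum a_n(f)F_n$ for surjectivity. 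This is the standard argument (as in \cite{Lang}) and is correct; there is nothing to compare against beyond the paper's one-line hint, with which your approach is fully consistent.
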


Since $A_\mu$ has bounded coefficients, for any specialization of $T = z$ with $v_p(z) >0$, the series 
$A_\mu(z)$ will converge. From the above definition, we have: 
\begin{lemma}
\label{inte}
If $v_p(z) >0$, then $$\int_{\Z_p} (1+z)^x d \mu(x)=A_{\mu}(z).$$ 
\end{lemma}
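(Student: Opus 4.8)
The plan is to unwind the notation and then interchange a continuous linear functional with a convergent series. By Definition~\ref{Amice} the symbol $A_{\mu}(T)$ is the formal power series $\sum_{n\ge 0}\big(\int_{\Z_p}\binom{x}{n}\,d\mu(x)\big)T^n$, and the content of the lemma is that its value at $T=z$ for $v_p(z)>0$ (which the discussion preceding the statement guarantees is a convergent specialization) equals $\mu$ applied to the function $x\mapsto(1+z)^x$. So I would first make precise what that function is, and then feed it to $\mu$.

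First I would recall the classical fact that $\binom{x}{n}=\tfrac{x(x-1)\cdots(x-n+1)}{n!}\in\Z_p$ for every $x\in\Z_p$ and $n\ge 0$ (it holds on the dense subset $\Z\subset\Z_p$, and the polynomial is continuous), so that $x\mapsto\binom{x}{n}$ lies in $C(\Z_p,\C_p)$ with sup norm $\le 1$. Since $v_p(z)>0$, the tail of the partial sums $S_N(x)=\sum_{n=0}^{N}\binom{x}{n}z^n$ is bounded in sup norm by $\max_{n>N}|z|_p^{\,n}\to 0$, so $(S_N)_N$ is Cauchy in the Banach space $C(\Z_p,\C_p)$ and converges uniformly to a continuous function; this limit is by definition the function $x\mapsto(1+z)^x$, and $\int_{\Z_p}(1+z)^x\,d\mu(x)$ denotes $\mu$ evaluated at it.

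Next, because $\mu\in M(\Z_p,\C_p)$ is bounded, it is continuous for the sup norm, hence $\mu(S_N)\to\mu\big((1+z)^{(\cdot)}\big)$. Expanding $\mu(S_N)=\sum_{n=0}^{N}z^n\int_{\Z_p}\binom{x}{n}\,d\mu(x)$ by linearity and passing to the limit gives $\int_{\Z_p}(1+z)^x\,d\mu(x)=\sum_{n\ge 0}\big(\int_{\Z_p}\binom{x}{n}\,d\mu(x)\big)z^n=A_{\mu}(z)$, which is the assertion.

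The only point requiring care --- the ``main obstacle,'' though a mild one --- is to establish convergence of the binomial series $\sum_n\binom{x}{n}z^n$ not merely pointwise in $x$ but uniformly, i.e.\ in the very norm against which $\mu$ is continuous; this is exactly where the uniform bound $|\binom{x}{n}|_p\le 1$ and the strict inequality $|z|_p<1$ are both used. Granting that, the rest is the routine commutation of a bounded linear functional with a norm-convergent series.
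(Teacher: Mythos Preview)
Your argument is correct. The paper does not supply a separate proof of this lemma: it simply records it as an immediate consequence of Definition~\ref{Amice}, where the formal identity $A_\mu(T)=\int_{\Z_p}(1+T)^x\,d\mu(x)$ is already built into the notation, together with the remark that $A_\mu(z)$ converges for $v_p(z)>0$. What you have written is exactly the justification that underlies this: the uniform bound $|\binom{x}{n}|_p\le 1$ on $\Z_p$, uniform convergence of the binomial series in $C(\Z_p,\C_p)$, and continuity of the bounded functional $\mu$ to pass the limit through. So your proof is the natural (and essentially only) way to make the paper's ``from the above definition'' precise.
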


\smallskip

We briefly review power series with integral coefficients. For a finite extension $K$ of $\Q_p$,  define 
\[
A(K)= \{f \in K[[T]] \ |\ \mbox{$f(z)$ is convergent for any $z \in \C_p$ with $v_p(z) >0$} \}.
\]
The power series with coefficients in $O_K$ can be characterized in terms of their zeros (see \cite{PollackDuke}): 

\begin{lemma}
\label{finitezeros}
Let $K$ be a finite extension of $\Q_p$ . Then $f(T) \in A(K)$ has finitely many zeros if and
only if $f(T) \in O_K[[T]] \otimes K$.
\end{lemma}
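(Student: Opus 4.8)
The plan is to prove both implications of Lemma \ref{finitezeros} using the Weierstrass preparation theorem for the ring $A(K)$. Recall that $A(K)$ consists of power series over $K$ converging on the open unit disc $v_p(z)>0$; these are precisely the ``entire'' functions of the open disc, and the relevant structural fact is that $A(K)$ is a Bezout domain admitting a Weierstrass factorization: any nonzero $f \in A(K)$ factors as $f = \pi^m \cdot u \cdot P$, where $\pi$ is a uniformizer of $O_K$, $m \in \Z$, $u$ is a unit in $A(K)$ (a power series with no zeros in the disc, normalized so its leading coefficient is a unit in $O_K$), and $P$ is a distinguished polynomial, i.e. a monic polynomial in $O_K[T]$ whose non-leading coefficients lie in the maximal ideal $\mathfrak{m}_K$. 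The degree of $P$ equals the number of zeros of $f$ in the disc (counted with multiplicity), since $u$ contributes none.

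First I would handle the direction $f(T) \in O_K[[T]] \otimes K \Rightarrow f$ has finitely many zeros. Writing $f = c \cdot g$ with $c \in K^\times$ and $g \in O_K[[T]]$, it suffices to treat $g$. Apply Weierstrass preparation in $O_K[[T]]$: after dividing out the largest power of $\pi$ dividing all coefficients, $g = \pi^m \cdot g_1$ with $g_1 \in O_K[[T]]$ not divisible by $\pi$, and then $g_1 = U \cdot P$ with $U \in O_K[[T]]^\times$ and $P$ a distinguished polynomial of degree $d = $ the index of the first coefficient of $g_1$ that is a unit. A unit $U$ of $O_K[[T]]$ has no zeros in the open disc (its constant term is a unit and the Newton polygon has no finite slopes), so the zeros of $g$ in the open unit disc are exactly the zeros of the polynomial $P$, of which there are at most $d < \infty$.

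Conversely, suppose $f \in A(K)$ has finitely many zeros. I would use the factorization in $A(K)$ mentioned above: $f = \pi^m u P$ with $u$ a zero-free element of $A(K)$ and $P \in O_K[T]$ a distinguished polynomial. The point is then to show that a zero-free element $u \in A(K)$, suitably normalized, in fact lies in $O_K[[T]]$ and is a unit there. This follows from examining the Newton polygon of $u$: having no zeros in $v_p(z)>0$ forces all Newton-polygon slopes to be $\le 0$, which combined with the convergence on the whole open disc and the normalization of the leading coefficient pins the coefficients into $O_K$ with unit constant term. Hence $u \in O_K[[T]]^\times$, and therefore $f = \pi^m u P \in O_K[[T]] \otimes K$ since $P \in O_K[T] \subset O_K[[T]]$ and $\pi^m \in K^\times$.

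The main obstacle is the converse direction, specifically establishing that a zero-free entire function on the open disc is (up to scaling) a power series with integral coefficients and unit constant term — equivalently, controlling the Newton polygon of an element of $A(K)$ at the ``boundary'' $r \to 1^-$. One must rule out, e.g., coefficients whose valuations drift to $-\infty$ slowly enough to preserve convergence yet fast enough to leave $O_K[[T]] \otimes K$; the admissibility/growth bookkeeping here is exactly what the reference \cite{PollackDuke} supplies, and I would cite it for this step rather than reproduce the Newton-polygon estimates. The forward direction, by contrast, is essentially a direct invocation of classical Weierstrass preparation over the complete discrete valuation ring $O_K$ and presents no real difficulty.
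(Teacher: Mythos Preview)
Your sketch is correct and is the standard argument via Weierstrass preparation and Newton polygons. Note, however, that the paper does not actually supply a proof of this lemma: it simply records the statement and cites \cite{PollackDuke}. So there is no ``paper's own proof'' to compare against beyond that reference; your write-up is already more detailed than what the paper provides, and the approach you outline (classical Weierstrass preparation in $O_K[[T]]$ for the forward direction, and for the converse dividing out a distinguished polynomial carrying the finitely many zeros and then using the Newton polygon to see that a zero-free element of $A(K)$ has coefficients of bounded valuation) is exactly the argument one finds in Pollack's paper.

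One small remark on the converse direction: you do not need to invoke the full Bezout/Lazard structure of $A(K)$. Since $f$ has only finitely many zeros $\alpha_1,\dots,\alpha_d$ in the open disc (over $\C_p$), the monic polynomial $P(T)=\prod_i (T-\alpha_i)$ already lies in $O_K[T]$ (the zero-set is Galois-stable because $f$ has coefficients in $K$, and each $\alpha_i$ has positive valuation, so $P$ is distinguished). Then $u=f/P \in A(K)$ is zero-free, and your Newton-polygon argument applies directly to $u$. This avoids citing a general Weierstrass factorization in $A(K)$ and keeps the proof entirely elementary.
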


\medskip
\subsection{\bf Convolution of two measures}
\label{Convolution} 
Let $\lambda$ and $\mu$ be two measures on $\Z_p$ with values in $K$, their convolution $\lambda * \mu$ is defined to be the measure 
\[
\int f d (\lambda * \mu)=\int \int f(x+y) d \lambda (x) d \mu (y). 
\]
Since $f$ is uniformly continuous, so $f \rightarrow \int_{\Z_p} f(x+y)d\mu(x)$ is continuous. 

\begin{lemma}
The multiplication of power series correspond to the convolution of measures on the additive group $\Z_p$, i.e., 
 $A_{\lambda *\mu}=A_{\lambda} A_{\mu}.$
 \end{lemma}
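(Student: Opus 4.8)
The plan is to compute the Amice transform of the convolution $\lambda * \mu$ directly from its defining integral formula and recognize the result as the product of the two Amice transforms. The key observation is that the function $x \mapsto (1+T)^x$ is multiplicative in $x$, so that $(1+T)^{x+y} = (1+T)^x (1+T)^y$ as formal power series in $T$ with coefficients in $K$; this exponential-type identity is exactly what turns the additive convolution on $\Z_p$ into multiplication of power series.

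First I would write, using Definition \ref{Amice} applied to $\lambda * \mu$,
\[
A_{\lambda * \mu}(T) \ = \ \int_{\Z_p} (1+T)^z \, d(\lambda * \mu)(z).
\]
Next I would apply the definition of the convolution from Section \ref{Convolution} with $f(z) = (1+T)^z$, which gives
\[
A_{\lambda * \mu}(T) \ = \ \int_{\Z_p} \int_{\Z_p} (1+T)^{x+y} \, d\lambda(x) \, d\mu(y).
\]
Then I would substitute the identity $(1+T)^{x+y} = (1+T)^x (1+T)^y$ and pull the factor $(1+T)^y$, which does not depend on $x$, out of the inner integral:
\[
A_{\lambda * \mu}(T) \ = \ \int_{\Z_p} (1+T)^y \left( \int_{\Z_p} (1+T)^x \, d\lambda(x) \right) d\mu(y) \ = \ \int_{\Z_p} (1+T)^y A_\lambda(T) \, d\mu(y),
\]
and since $A_\lambda(T)$ is a fixed power series independent of $y$, factoring it out of the outer integral yields $A_\lambda(T) A_\mu(T)$, as desired.

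The main point requiring care — and the step I expect to be the only real obstacle — is the manipulation of $T$ as a formal variable inside the integrals: one must justify interchanging the $\C_p$-linear functionals $\lambda, \mu$ with the (infinite) sums defining $(1+T)^z = \sum_n \binom{z}{n} T^n$, and with the product of two such series. The clean way to do this is to work coefficient-by-coefficient: for each fixed $n$, the coefficient of $T^n$ in $A_{\lambda*\mu}(T)$ is $\int_{\Z_p} \binom{z}{n} d(\lambda*\mu)(z) = \int\int \binom{x+y}{n} d\lambda(x) d\mu(y)$, and using the Vandermonde identity $\binom{x+y}{n} = \sum_{j=0}^{n} \binom{x}{j}\binom{y}{n-j}$ together with the boundedness (hence the finite-sum exchange being legitimate for each $n$) gives $\sum_{j=0}^n \left(\int \binom{x}{j} d\lambda\right)\left(\int \binom{y}{n-j} d\mu\right)$, which is precisely the $n$-th coefficient of the Cauchy product $A_\lambda(T) A_\mu(T)$. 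Since the Amice transform lands in $\C_p\{\{T\}\}$ and both sides have bounded coefficients, matching all coefficients proves the equality of power series. I would present the proof via the multiplicative identity for $(1+T)^z$ as the conceptual argument, noting that it is made rigorous by the coefficient-wise Vandermonde computation just described.
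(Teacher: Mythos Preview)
Your proof is correct and rests on the same idea as the paper's: the exponential identity $(1+T)^{x+y}=(1+T)^x(1+T)^y$ converts additive convolution into multiplication. The execution differs slightly. The paper specializes $T$ to an actual $z\in\C_p$ with $v_p(z)>0$, invokes Lemma~\ref{inte} to identify $A_\nu(z)=\int_{\Z_p}(1+z)^x\,d\nu(x)$ as a convergent $\C_p$-number, and then uses the ordinary Fubini/multiplicativity for $\C_p$-valued integrals to get $A_{\lambda*\mu}(z)=A_\lambda(z)A_\mu(z)$ for all such $z$; equality of the bounded power series is then implicit from equality of all their values on the open disc. You instead keep $T$ formal and justify the manipulation coefficient-by-coefficient via the Vandermonde identity $\binom{x+y}{n}=\sum_{j}\binom{x}{j}\binom{y}{n-j}$. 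Your route is a bit more self-contained (it avoids both the appeal to Lemma~\ref{inte} and the unstated identity principle for power series), while the paper's is shorter on the page; either is perfectly acceptable here.
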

 
\begin{proof}
Consider the function $f(x)=z^x$ and $v_p(z-1)>0$.  By Lemma~\ref{inte}, we have 
\[
A_{\lambda * \mu}(z) \ =\ 
\int_{\Z_p} z^{x} (\lambda* \mu)(x) \ = \int_{\Z_p} z^{x+y} \lambda(x) \mu(y) \ = \ 
\int_{\Z_p} z^x \lambda(x)  \int_{\Z_p} z^y  \mu(y) \ = \ A_{\lambda}(z) A _{\mu}(z).
\]
\end{proof}

\medskip
\subsection{\bf $p$-adic $L$-functions for modular forms}
\label{Modularforms}
For modular forms, $p$-adic $L$-functions were constructed by Manin \cite{Manin}, Mazur and 
Swinnerton-Dyer \cite{Mazur} for ordinary primes using modular symbols. 
The construction has been extended for non-ordinary primes by Vishik~\cite{Vishik}, Amice-V\'elu~\cite{AmiceVelu} and Pollack~\cite{PollackDuke}. 
There are two known methods of construction of $p$-adic $L$-functions for modular forms at the ordinary primes:
(1)  Modular symbols, (2) Kato's  Euler systems. 
In the first method, $p$-adic measures are defined using the properties of these modular symbols and then the 
$p$-adic $L$-functions are given by 
integrating the characters of $p$-power conductors with respect to these $p$-adic measures. 
Several known constructions of 
the $p$-adic $L$-functions for automorphic forms use and generalize this method, as we will see later. 
We will not be dealing with Euler systems in this article. 

\smallskip

Let $f \in S_k(N,\epsilon)$ 
be a normalized holomorphic cusp form for $\Gamma_0(N)$ of weight $k \geq 2$ and character 
$\epsilon;$ assume that $f$ is a Hecke eigenform. Let $K(f)$ be the finite extension of $\Q$ 
generated by the Fourier coefficients of the modular form $f$ and let $\mathcal{O}(f)$ be the ring of integers 
of $K(f)$. Let $\alpha$ and $\beta$ be roots of the Hecke polynomial 
at $p$, i.e., 
\begin{equation}
\label{Hecke}
X^2-a_p X +\epsilon(p)p^{k-1}=(X-\alpha)(X-\beta).
\end{equation}
If $v_p(\alpha)=0$ ($p$ is ordinary for $f$)  then the $p$-adic $L$-function 
is a power series with coefficients in $\Z_p$. By the $p$-adic  Weierstrass preparation theorem, there are only finitely 
many zeros of this power series. If $v_p(\alpha)>0$, the $p$-adic $L$-function is not a 
power series with coefficients in $\Z_p$, and hence it has infinitely many zeros (see Lem.\,\ref{finitezeros}). 
If $0 < v_p(\alpha) < k-1$, Vishik and Amice-Velu studied $p$-adic $L$-functions for modular forms of weight greater than $2$; these are  power series (may 
not be with bounded coefficients) of bounded $p$-adic growth like $\log_p(T)$. 
If $a_p=0$ ($p$ is a supersingular prime for $f$), then it is not possible to apply 
the method of Vishik and Amice-V\'elu. 
In this case, Pollack discovered a method to remove certain special zeros of this power series
and constructed $p$-adic $L$-functions with co-efficients in $\Z_p$. 
The $p$-adic analytic function $L_{p,f}$ of bounded growth on $X_p$ is
exactly the Mellin transforms of $p$-adic $h$-admissible measures $\mu_f$ on $\Z_p^{\times}$: 
$L_{p,f}(\xi) \ = \ 
\int_{\Z_p^{\times}} \xi d \mu_f.$
In particular, $p$-adic $L$-functions are obtained  by integrating $p$-adic characters against $p$-adic $h$-admissible measures. We now describe the admissible measure $\mu_f$ corresponding to $f \in S_k(N,\epsilon)$ as above. 

\smallskip

For $f$ as above and a polynomial $P$ of degree less than $k-1$, define 
\[
\phi(f, P,r) \ = \ 2 \pi i \int_{i \infty}^r f(z) P(z) dz. 
\]
Let $L_f$ be the $\Z$-module generated by all $\phi(f, P, r)$ for all $r \in \Q$; then $L_f$ is finitely generated over $\Z$. 
We call a root $\alpha$ of $X^2-a_p X+\epsilon(p) p^{k-1}=0$ allowable if $ord_p(\alpha)<k-1$. 
For $$\eta(f,P, a, m) \ := \ \phi(f, P(mz-a),-\frac{a}{m}), $$ we define the plus and minus parts 
of $\eta$ by 
\[
\eta^{\pm}(f,P, a, m) \ = \ \frac{\eta(f,P, a, m) \pm \eta(f,P, -a, m)}{2}. 
\]
By a well known theorem of Manin, there exist ${\Omega_f^{\pm}}  \in \C^{\times}$ 
such that $\frac{\eta^{\pm}(f,P, a, m)}{\Omega_f^{\pm}} \in \mathcal{O}(f)$. 
We now define the period integral of $f$ by 
$\lambda^{\pm}(f,P, a, m)=\frac{\eta^{\pm}(f,P, a, m)}{\Omega_f^{\pm}} \in \mathcal{O}(f)$.
An admissible measure on  $\Z_p^{\times}$ associated to $f$ and $\alpha$  
is defined by the formula 
\begin{equation}
\label{explicitmeasure}
\mu_{f, \alpha}( P, a+p^n \Z_p) \ = \  \frac{1}{\alpha^n} \lambda^{\pm}(f,P,a, p^n)-\frac{\epsilon(p)p^{k-2}}{\alpha^{n+1}} \lambda^{\pm}(f,P, a, p^{n-1}).
\end{equation}
The $p$-adic $L$-function $L_{p, f, \alpha}$ is obtained by evaluating the 
characters of $p$-power orders on $\mu_{f, \alpha}$. The following is the main result on $p$-adic $L$-functions for modular forms:

\begin{theorem}
\label{Mazur}
(Manin, Mazur--Swinnerton-Dyer, Mazur--Tate--Teitelbaum, Vishik, Amice-V\'elu) Let $f$ be a
cuspidal normalized eigenform of weight $k $, level $N$ and character $\epsilon$.
Assume that $N$ is prime to $p$. Let $\alpha, \beta$  be the two roots of $X^2-a_p X + p^{k-1} \epsilon(p)=0$, and
choose one, say $\alpha$, with $v_p(\alpha) < k - 1$. 
There exists a unique function $L_{p, f, \alpha}:X_p \rightarrow \C_p $
that satisfies the following properties:
\begin{itemize}
 \item 
 (interpolation) For any character $\chi : \Z_p^{\times} \rightarrow \C_p^{\times}$ of finite image and of conductor $p^n$, and any
integer $j$ such that $0 \leq j \leq  k-2$, we have 
\[
 L_{p, f, \alpha}(x_p^j \chi) \ = \ e_{p, f, \alpha}(\chi, j) \, 
\frac{p^{n(j+1)} j!}{\Omega_f^{\pm}  G(\chi^{-1})\alpha^ n (-2\pi i)^j} \, 
L(f, {\chi}^{-1} , j + 1), 
\]
where $G(\chi^{-1})$ is the Gauss sum of $\chi^{-1}$ and 
$e_{p, f, \alpha}(\chi,j)=(1-\frac{\overline{\chi(p)} \epsilon(p)p^{k-2-j}}{\alpha})(1-\frac{\chi(p)p^j}{\alpha}).$
\smallskip
\item
(growth rate) The order of growth of $L_{p, f, \alpha}$ is $\leq v_p(\alpha)$.
\end{itemize}
\end{theorem}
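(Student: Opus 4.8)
The plan is to carry out the construction in four stages: (i) use Prop.\,\ref{distributions} to promote the explicit formula \eqref{explicitmeasure} to a genuine $\C_p$-valued distribution on $\Z_p^\times$; (ii) upgrade this distribution to an $h$-admissible measure $\mu_{f,\alpha}$ with $h$ the least integer strictly exceeding $v_p(\alpha)$ (so $h\le k-1$ by the hypothesis $v_p(\alpha)<k-1$, and when $v_p(\alpha)=0$ the measure is in fact bounded); (iii) define $L_{p,f,\alpha}(\chi):=\int_{\Z_p^\times}\chi\,d\mu_{f,\alpha}$ and read off the order of growth; (iv) verify the interpolation property and deduce uniqueness. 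For stage (i), I would feed the assignment $(P,\,a+p^n\Z_p)\mapsto\mu_{f,\alpha}(P,\,a+p^n\Z_p)$ into Prop.\,\ref{distributions}, i.e.\ check the additivity relation $\mu_{f,\alpha}(P,\,a+p^n\Z_p)=\sum_{b=0}^{p-1}\mu_{f,\alpha}(P,\,a+bp^n+p^{n+1}\Z_p)$. The crux is that the partial modular symbols $\eta^\pm(f,P,a,p^n)$ obey a relation that is exactly the action of the Hecke operator at $p$ — namely $T_p$, or $U_p$ when $p\mid N$ — on the eigenform $f$ with eigenvalue $a_p$; the weight $\alpha^{-n}$ and the correction term $-\epsilon(p)p^{k-2}\alpha^{-(n+1)}\lambda^\pm(f,P,a,p^{n-1})$ in \eqref{explicitmeasure} are chosen precisely so that this Hecke relation, combined with the factorization \eqref{Hecke}, telescopes into the required additivity. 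This is the classical computation of Manin and Mazur--Swinnerton-Dyer.

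For stage (ii), the inputs are Manin's theorem that $L_f$ is finitely generated over $\Z$ together with the integrality $\lambda^\pm(f,P,a,m)=\eta^\pm(f,P,a,m)/\Omega_f^\pm\in\mathcal{O}(f)$. Using these I would estimate $|\mu_{f,\alpha}((x-a)^i\chi_{a+p^n\Z_p})|_p$ directly from \eqref{explicitmeasure}: the $\lambda^\pm$ have bounded $p$-adic denominators, the $p^n$-dependence built into $\eta$ contributes the expected power of $p$, and the factor $\alpha^{-n}$ forces growth of order exactly $v_p(\alpha)$, so one reads off the $h$-admissibility estimate. The hypothesis $v_p(\alpha)<k-1$ ensures $h\le k-1$, which is what one needs for the test polynomials of degree $<h$ to lie in the range $\deg P<k-1$ where the symbols $\eta^\pm$ are defined. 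Then Vishik's theorem (the $h$-admissible refinement of Thm.\,\ref{Manin}) extends $\mu_{f,\alpha}$ to all locally analytic functions on $\Z_p^\times$, so stage (iii) is immediate: $L_{p,f,\alpha}(\chi)=\int_{\Z_p^\times}\chi\,d\mu_{f,\alpha}$ is $p$-adic analytic on each component $\B$ of $X_p$, and the admissibility estimate translates, via the correspondence between $h$-admissible measures and power series of bounded growth recalled in the previous subsections, into order of growth $\le v_p(\alpha)$.

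Stage (iv) is the verification of the interpolation formula. Evaluating $L_{p,f,\alpha}$ at the character $x_p^j\chi$ with $\chi$ of conductor $p^n$ and $0\le j\le k-2$: since $x_p^j\chi$ is locally polynomial of degree $j\le h$, its integral against $\mu_{f,\alpha}$ collapses to the finite twisted sum $\alpha^{-n}\sum_{a\bmod p^n}\chi(a)\,\lambda^\pm(f,X^j,a,p^n)$ together with the analogous term at level $p^{n-1}$. By the Birch/Manin integral representation of the $L$-function — the Mellin transform $\int_0^\infty f(iy)y^{s-1}\,dy$ and its twists by $\chi$, decomposed into residue classes mod $p^n$ — this twisted sum is a Gauss-sum multiple of $L(f,\chi^{-1},j+1)$; gathering the elementary archimedean factors $p^{n(j+1)}$, $j!$, $(-2\pi i)^{-j}$, $G(\chi^{-1})^{-1}$, the period $\Omega_f^\pm$ with the correct sign, and recognizing that the two-term combination in \eqref{explicitmeasure} assembles the Euler factor $e_{p,f,\alpha}(\chi,j)$, produces the stated identity. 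Uniqueness follows from the injectivity half of Vishik's correspondence: a $p$-adic analytic function on $X_p$ of growth $\le v_p(\alpha)<h$ is determined by its values on the characters $x_p^j\chi$ with $0\le j<h$ and $\chi$ of $p$-power conductor, and these are precisely the moments of the underlying $h$-admissible measure against locally polynomial functions of degree $<h$.

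\textbf{The main obstacle.} The genuinely delicate part is the bookkeeping in stage (iv): reconciling the analytic side (the Mellin-transform integral representation of $L(f,s)$ and its $\chi$-twists) with the cohomological side (the twisted sums of modular symbols $\eta^\pm$) so that every archimedean constant emerges exactly as written — in particular the parity bookkeeping that selects $\Omega_f^+$ versus $\Omega_f^-$ according to the sign of $\chi(-1)(-1)^j$ — while simultaneously confirming that the two terms of \eqref{explicitmeasure} combine into precisely $e_{p,f,\alpha}(\chi,j)=\bigl(1-\overline{\chi(p)}\epsilon(p)p^{k-2-j}/\alpha\bigr)\bigl(1-\chi(p)p^j/\alpha\bigr)$ and not some off-by-a-twist variant. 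By contrast the distribution relation, the admissibility estimate, and the passage to a $p$-adic analytic function are robust and essentially formal once the Hecke relation and the finite generation of $L_f$ are in hand.
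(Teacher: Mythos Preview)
The paper does not actually prove this theorem: it is stated as a classical result attributed to Manin, Mazur--Swinnerton-Dyer, Mazur--Tate--Teitelbaum, Vishik, and Amice--V\'elu, and is followed only by the one-sentence remark that in the $p$-ordinary case the distribution is a bounded measure, while for $v_p(\alpha)>0$ the denominator $\alpha^n$ in \eqref{explicitmeasure} causes faster growth. Your four-stage outline is precisely the standard argument that the paper's preceding exposition is designed to evoke --- the distribution relation via Prop.\,\ref{distributions} and the Hecke identity, $h$-admissibility from the integrality of $\lambda^\pm$ and the explicit $\alpha^{-n}$, extension via Vishik's theorem, and the Birch--Manin interpolation computation --- so your proposal is correct and matches the intended (but omitted) proof.
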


In the $p$-ordinary case ($ord_p(a_p)=0$), there is a unique allowable $\alpha$. In this 
case, the corresponding distribution is a measure. We note that the measure grows at a faster rate if 
${\rm ord}_p(\alpha)>0$, 
since $\alpha$ is in the denominator of (\ref{explicitmeasure}).

\medskip
\subsection{\bf $p$-adic $L$-functions for automorphic forms}
\label{automorphic}
The above mentioned constructions of $p$-adic $L$-functions and $p$-adic measures that interpolates critical 
values of $L$-functions attached to modular forms can be generalised to get $p$-adic $L$-functions for 
cohomological, cuspidal, automorphic representations $\pi$.
For automorphic representations on $\GL_2$ over totally real 
number fields, the above $p$-adic $L$-functions were constructed by Manin~\cite{ManinJacquet}. This has been generalized by Haran~\cite{Haran}  for any number field. 
Mahnkopf ~\cite{mahnkopfpadic} and his student Geroldinger \cite{Mahnkopfstu} generalized this work for 
$\GL_3$. For $\GL_3 \times \GL_2 /\Q$, such $p$-adic 
$L$-functions were constructed by C.-G. Schmidt~\cite{schmidt-inv2}. 
His construction was 
generalised by Kazdhan, Mazur and Schmidt \cite{kms} to $\GL_n \times \GL_{n-1}/\Q,$ and 
Januszewski~\cite{janus-imrn} \cite{janus-preprint} for $\GL_n \times \GL_{n-1}$ over general number fields.   
In a different direction, one may say that Manin's construction was partially generalized to 
$\GL_{2n}$ by Ash--Ginzburg \cite{ash-ginzburg}. 
The general recipe involves studying the algebraic parts of the special 
values of complex $L$-functions 
for automorphic representations, and then using them to construct $p$-adic measures and hence 
$p$-adic $L$-functions. The existence of a $p$-adic measure will depend on several choices:
\begin{itemize}
  \item 
appropriate periods to make $L$-values algebraic;
 \item 
a root of the Hecke polynomial at $p$;
 \item 
critical points of the complex $L$-function associated to the automorphic form.
 \end{itemize}
We will elaborate on this recipe in a couple of situations in \S\,\ref{sec:lfnGL4} and \S\,\ref{sec:gl3-gl2} below.

\medskip
\subsection{\bf $p$-adic $L$-function for motives}
\label{Motive}
Following  Coates~\cite{Coates}, Panchishkin \cite{Panchiskin} and Dabrowski~\cite{Dabrowski}, we briefly discuss 
a general conjecture on the existence of $p$-adic $L$-function attached 
to a motive. Let $M$ be a pure motive over $\Q$ with coefficients in $\Q$ of weight $w= w(M)$ and rank $d = d(M)$. 
This motive has Betti, de Rham and $l$-adic realizations (for each prime $l$)
with cohomology groups $H_B(M), H_{DR}(M)$ and $H_l(M)$ which are  vector spaces
over $\Q$, $\Q$ and $\Q_l$, respectively, all of dimension $d.$ These groups are endowed with additional structures and comparison isomorphisms.
In particular, $H_B(M)$ admits an involution $\rho_B$ and there is a Hodge decomposition into $\C$-vector spaces
\[
\HH_B(M) \otimes \C=\bigoplus_{p+q=w} \HH^{p,q}(M).
\]
 Let $\rho_B$ acts on $\HH_B(M) \otimes \C$ via it's action on the first factor.
We have $\rho_B( \HH^{ i , j} ( M )) = \HH^{ j ,i} ( M )$.
Let $h(i , j ) = \mathrm{dim} \, \HH^{i,j}(M)$ which are called the Hodge numbers of $M$, 
and let $\mathrm{d}^{\pm} =\mathrm{d}^{\pm}(M)$ be the $\Q$-dimension of the $\pm$-eigenspace of 
$\rho_B$. Furthermore, $\HH_l(M)$ is a $\Gal(\overline{\Q}/\Q)$ module and we denote 
the corresponding representation by $\rho_l$.

\begin{definition}
[{\bf Hodge polygon};  see Panchiskin~\cite{Panchiskin}]
The Hodge polygon $P_H(M)$ is a continuous function on $[0,d]$, whose graph is a polygon 
joining the points
$$
\left(0,0\right), \ \cdots, \  \left(\sum_{i' \leq i} h(i',j), \sum_{i' \leq i } i'h(i',j)\right), \ \cdots, 
 \left(\sum_{i' \leq d} h(i',j), \sum_{i' \leq d } i'h(i',j)\right).
 $$  
Note that by purity, $j = w-i'.$
\end{definition}

\begin{definition}
[{\bf Newton Polygon of a polynomial}]
Let 
$$
P(T) \ = \ 1+a_1T+a_2 T^2+ \dots + a_dT^d \ = \ \prod_{i=1}^d (1-\alpha_iT)
$$ 
be a polynomial with coefficients 
in $\C_p$ and let the roots $\alpha_i$ of this polynomial be ordered such that $v_p(\alpha_i) \leq v_p(\alpha_{i+1})$ for all $i$. The {\it Newton polygon} of $P$ with respect to the $p$-adic 
valuation $v_p$ is defined to be the graph of the continuous, piecewise linear, convex 
function $f$ on $[0,d]$  obtained by joining the points 
$$
(0,0), \ \dots , \ (i,v_p(\alpha_i)), \ \dots , \ (d, v_p(\alpha_d)).
$$
\end{definition}

Let $I_p$ be the inertia subgroup of the decomposition group $\Gal(\overline{\Q}_p \slash \Q_p)$.
The $L$-function of the motive $M$ is defined as an Euler product 
$$
L(s,M)=\prod_p L_p(s,M),
$$ 
with the Euler factor at $p$ given by $L_p(s,M) = Z_p(X, M)^{-1}|_{X = p^{-s}}$ where the Hecke polynomial $Z_p(X,M)$ is defined as: 
\begin{equation}
\label{Heckemotive}
Z_p(X,M) \ := \ \mathrm{det}(1-\rho_l(\mathrm{Frob}^{-1}_p)X|{\HH_l(M)}^{I_p}) \ = \ 
\sum_{i=0}^d A_i(p)X^i \ = \ \prod_{i=1}^d(1-\alpha_i X).
\end{equation}
This is a polynomial with coefficients in $\Q_\ell$, and via the usual expectation of $\ell$-independence, the 
coefficients $A_i(p)$ are in $\Q,$ and so the roots $\alpha_i$ are in $\overline{\Q}$, but are thought of as elements of 
$\C_p$ via the fixed embedding $i_p: \overline{\Q} \rightarrow \C_p.$

\begin{definition}
[{\bf Nearly $p$-ordinary}, see Hida~\cite{HidaPoincare}]
We call a motive 
$M$ to be nearly $p$-ordinary if  
$$
P_{N,p}(M) = P_H(M).
$$
\end{definition}

The following is part of a conjecture on the existence of $p$-adic $L$-functions attached to pure motives. 
For the $\alpha_i$ as in (\ref{Heckemotive}), and for 
a Dirichlet character $\chi$ of conductor $c(\chi)$ and an integer $m$, define a factor at $p$ by 
\begin{equation*}
A_p(m, M(\chi)) \ = \ 
\begin{cases}
\prod_{i=d^++1}^d(1-\chi(p)\alpha_i p^{-m}) \prod_{i=1}^{i=d^+}(1-\chi^{-1}(p)\alpha_i^{-1} p^{m-1}), 
 & \text{if $p \mid c(\chi)$}\\
 (\frac{p^m}{\alpha_p^{(i)}})^{ord_p(c(\chi))}, & \text{if $p \nmid c(\chi)$}.
\end{cases}
\end{equation*}
For a pure motive $M$, let $\Lambda(s,M(\chi))$ be the completed $L$-function associated 
to the motive $M(\chi)=M \otimes \chi$.

\begin{conj}[\cite{Coates}, \cite{Dabrowski}, \cite{Panchiskin}] 
For any sign $\epsilon = \pm$, there exists a period $\Omega(\epsilon, M)$, and there exists a 
meromorphic function $L^{\epsilon}_{p,M} : X_p \rightarrow \C_p$, satisfying the following properties: 
\begin{itemize}
\item
For all but finite number of pairs $(m, \chi) \in X_p^{tor}$ such that $M(\chi)(m)$ is critical and 
$\epsilon_0 = ((-1)^m \epsilon(\chi))$, we have 
\[
L^{\epsilon_0}_{p,M}(\chi x_0^m)=G(\chi)^{-d\epsilon_0(M)} A_p(M(\chi),m) \frac{\Lambda(M(\chi),m)}{\Omega(\epsilon_0,M)}. 
\]
\item 
If $h(\frac{w}{2},\frac{w}{2})=0$, then $L^{\epsilon_0}_{p,M}$ is holomorphic. 

\item 
If $P_{N,p}(M) = P_H(M)$ and $h(\frac{w}{2},\frac{w}{2})=0$, then the holomorphic function $L_{p,M}^{\epsilon_0}$ 
is bounded. 
\end{itemize}
\end{conj}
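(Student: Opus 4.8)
\emph{Sketch of a program.} Since this is an open conjecture, what follows is only an outline of the strategy by which one would reduce it to automorphic constructions of the type surveyed here, rather than an actual proof. The first step is to realize $M$ automorphically: by the (conjectural) Fontaine--Mazur and Langlands correspondences, $M$ should be the motive attached to a cohomological cuspidal automorphic representation $\pi$ of $\GL_d/\Q$ (or of a product such as $\GL_3\times\GL_2$, as in the $\Sym^3$ application of Sect.\,\ref{sec:gl3-gl2}), so that $L(s,M)=L(s,\pi)$ up to a shift and the Hecke polynomial $Z_p(X,M)$ is the local factor of $\pi$ at $p$. One then works in the Betti realization $\HH_B(M)$, which inherits a $\Q$-structure from the rational cohomology of the associated locally symmetric space, and identifies the Deligne periods $\Omega(\epsilon,M)$ attached to the $\rho_B$-eigenspaces with the automorphic periods $\Omega^{\pm}$ obtained by comparing this $\Q$-structure with the Whittaker/archimedean normalization of the $L$-values --- the analogue of the periods $\Omega_f^{\pm}$ in Thm.\,\ref{Mazur}.

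Next one would write down a candidate distribution $\mu_{M,\alpha}$ on $\Z_p^{\times}$ from an integral representation of $L(s,M)$ --- a Mellin transform of a cohomology class in the modular-symbol formalism --- $p$-stabilized by a root $\alpha$ of $Z_p(X,M)$ lying on the first side of its Newton polygon. The additivity relation of Prop.\,\ref{distributions} for $\mu_{M,\alpha}$ should follow from the relations satisfied by the Hecke operator $U_p$ on the $\alpha$-eigenspace, exactly as for the measure in~(\ref{explicitmeasure}). Integrating $\chi x_0^m\in X_p$ against $\mu_{M,\alpha}$ defines $L_{p,M}^{\epsilon_0}$, and the interpolation formula should emerge from unfolding the integral representation at the critical integer $m$: the factor $A_p(M(\chi),m)$ as the ratio of the modified Euler factor at $p$ to the one removed by the $\alpha$-stabilization, and $G(\chi)^{-d\epsilon_0(M)}$ from the Gauss sums in the functional equation of the twist $M(\chi)$.

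Finally one would upgrade $\mu_{M,\alpha}$ from a distribution to an $h$-admissible measure, with $h$ controlled by $v_p(\alpha)$; by the theorem of Vishik this produces an analytic $L_{p,M}^{\epsilon_0}$ on $X_p$ of growth $\leq v_p(\alpha)$. The admissibility estimate $|\mu_{M,\alpha}((x-a)^i\chi_{a+p^n\Z_p})|=O(p^{e(h-i)})$ is to come from a finiteness/integrality statement for the period integrals --- finite generation of the $\cO$-lattice they span, as for $L_f$ above --- combined with the slopes read off the Newton polygon. When $h(\frac{w}{2},\frac{w}{2})=0$ the prospective trivial zero or pole coming from the central Hodge piece is absent, so no $(x_p x-1)$-type denominator is needed and $L_{p,M}^{\epsilon_0}$ is holomorphic; if moreover $M$ is nearly $p$-ordinary then $P_{N,p}(M)=P_H(M)$ forces $v_p(\alpha)=0$ for the chosen root, hence $h=0$, the measure is bounded, and by Lem.\,\ref{finitezeros} the function $L_{p,M}^{\epsilon_0}$ lies in the Iwasawa algebra and has finitely many zeros.

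The main obstacle is that every step above is conditional: realizing a general pure motive automorphically rests on deep unproven conjectures, and even granting this, the modular-symbol construction of the distribution together with the admissibility/integrality estimate is known only in the special cases recalled here (classical modular forms; $\GL_{2n}$ with a Shalika model via Ash--Ginzburg; $\GL_3\times\GL_2$ via Schmidt, and thence the $\Sym^3$ transfer). A uniform construction valid for all $M$ --- in particular matching the Deligne period with an automorphic period and establishing the precise shape of $A_p(M(\chi),m)$ and the Gauss-sum factor in general --- is exactly the hard part, and the reason this remains open.
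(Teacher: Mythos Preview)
The statement you were asked to ``prove'' is a \emph{conjecture} (attributed in the paper to Coates, Dabrowski, and Panchishkin), and the paper itself offers no proof --- it merely records the conjecture as motivation for the automorphic constructions that follow. Your write-up correctly recognizes this and presents not a proof but a program, which is the right response.

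Your outline is a reasonable heuristic summary of how the special cases treated in the paper (modular forms, $\GL_{2n}$ with Shalika model, $\GL_3\times\GL_2$) fit into the conjectural picture, and you are explicit about the conditional nature of every step. There is nothing to correct mathematically, since no proof is being claimed. If anything, you might make clearer at the outset that the paper does not purport to prove the conjecture either, so there is no ``paper's proof'' to compare against; the conjecture is quoted as a guiding principle, and the rest of the article verifies instances of it in the automorphic settings you list.
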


\medskip
\subsubsection{\bf Motive attached to a modular form}
\label{sec:modular-motive}
Let $f \in S_k(N, \epsilon)$ be a primitive modular form for $k>1$ with Fourier coefficients in $\Q$ and let $M(f)$ be the Grothendieck motive attached to $f$ by Scholl \cite{scholl}. 
To ensure that $M(f)$ has coefficients in $\Q$, we assumed the Fourier coefficients 
of $f$ to be in $\Q$. 
This is a pure motive of weight $k-1$ with Hodge structure given by 
\[
\HH_B(M(f)) \otimes \C=\HH^{0,k-1} \bigoplus \HH^{k-1,0}.
\]
where both summands are $1$-dimensional. The Hodge polygon is the line segments joining 
$$
\{(0,0), \ (h^{(0,k-1)},0), \ (h^{(0,k-1)}+h^{(k-1,0)}, (k-1) h^{(k-1,0)})\} \ = \ 
\{(0,0), \ (1,0), \ (2,k-1)\}.
$$ 
We also have the following equality of $L$-functions 
 \[
 L(s,f \otimes \chi)=L(s, M(f) \otimes \chi).
 \]
The polynomial $Z_p(X,M(f))$ coincides with the Hecke polynomial (\ref{Hecke}). 
The $p$-Newton polygon for $M(f)$
 is a curve joining 
 $$
 \{(0,0), \ (1, v_p(a_p)), \ (2, v_p (\epsilon(p) p^{k-1}))\}.
 $$
Following Hida, we call a modular form $f$  to be  $p$-ordinary if $v_p(a_p)=0$.
Hence, a classical modular form is $p$-ordinary if and only if $M(f)$ is nearly $p$-ordinary.

\bigskip
\section{The symmetric power $L$-functions}
\label{sec:sym-powers}

\subsection{\bf Langlands functoriality for symmetric powers}
Let $\pi$ be a cuspidal 
automorphic representation of ${\rm GL}_2({\mathbb A})$, 
where $\A$ is the adele ring of $\Q.$ This means 
that, for some $s \in {\mathbb R}$, $\pi \otimes |\cdot |^s$ is an 
irreducible summand of
$
L^2_{\rm cusp}({\rm GL}_2(\Q)\backslash
{\rm GL}_2({\mathbb A}), \omega)
$
the space of square-integrable cusp forms with unitary central character $\omega$. 
We have the decomposition
$\pi = \otimes_p' \pi_p$ where $p$ runs over all places of $\Q$ and 
$\pi_p$ is an irreducible admissible representation of ${\rm GL}_2(\Q_p)$.
Given such a $\pi$, consider the Euler product of the standard (Jacquet--Langlands) $L$-function: 
$$
L(s, \pi) = \prod_p L_p(s, \pi_p), \quad \Re(s) \gg 0.
$$
For all $p$ outside a finite set $S$ of places including the archimedean ones and the places where $\pi$ is ramified,  suppose the Euler factor at $p$ looks like:
$$
L_p(s, \pi_p) = (1 - \alpha_p p^{-s})^{-1}(1 - \beta_p p^{-s})^{-1}. 
$$
For any Hecke character $\chi : \Q^\times \backslash \A^\times \to \C^\times$, we define a partial 
twisted $n$-th symmetric power $L$-function: 
$$
L^S(s, \Sym^n \otimes \chi, \pi) \ := \ \prod_p \prod_{j=0}^n (1 - \alpha_p^{n-j} \beta_p^j \chi(p)p^{-s})^{-1}, 
\quad \Re(s) \gg 0. 
$$
The Langlands program says that we should be able to complete this partial $L$-function at places $p \in S$ and
the completed $L$-function $L(s, \Sym^n \otimes \chi, \pi)$ is expected to have all the usual properties of analytic continuation, functional equation, etc. 

Let's elaborate a little further for which we recall the formalism of Langlands functoriality especially for symmetric powers. We will be brief here as there are 
several good expositions; 
see for instance Clozel~\cite{clozel2}.  
The local Langlands correspondence for ${\rm GL}_2$ (see \cite{kutzko} 
and \cite{kudla} for the $p$-adic case and \cite{knapp} for the 
archimedean case), 
says that to $\pi_p$ is associated a representation 
$\sigma(\pi_p) : W_{\Q_p}' \to {\rm GL}_2({\mathbb C})$ of the 
Weil--Deligne group $W_{\Q_p}'$ of $\Q_p$. (If $p$ is infinite, we take 
$W_{\Q_p}' = W_{\Q_p}$.) Let $n \geq 1$ be an integer. Consider the 
$n$-th symmetric power of $\sigma(\pi_p)$ which is an $n+1$ dimensional
representation. This is simply the composition of $\sigma(\pi_p)$ with 
${\rm Sym}^n : {\rm GL}_2({\mathbb C}) \to {\rm GL}_{n+1}({\mathbb C})$. 
Appealing to the local Langlands correspondence for ${\rm GL}_{n+1}$
(\cite{harris-taylor}, \cite{henniart}, \cite{knapp}, \cite{kudla})
we get an irreducible admissible representation of 
${\rm GL}_{n+1}(\Q_p)$ which we denote as ${\rm Sym}^n(\pi_p)$.  
Now define a global representation of ${\rm Sym}^n(\pi)$ of 
${\rm GL}_{n+1}({\mathbb A})$ by 
${\rm Sym}^n(\pi) := \otimes'_p \ {\rm Sym}^n(\pi_p).$
{\it Langlands principle of functoriality} predicts that 
${\rm Sym}^n(\pi)$ is an automorphic representation of ${\rm 
GL}_{n+1}({\mathbb A})$, i.e., it is isomorphic to an irreducible 
subquotient of the representation of ${\rm GL}_{n+1}({\mathbb A})$ 
on the space of automorphic forms \cite[\S4.6]{borel-jacquet}. 
If $\omega_{\pi}$ is the central character of $\pi$ then 
$\omega_{\pi}^{n(n+1)}$ is the central character of ${\rm Sym}^n(\pi)$. 
Actually it is expected to be an isobaric automorphic representation. 
(See \cite[Definition 1.1.2]{clozel2}
for a definition of an isobaric representation.) 
The principle of functoriality for the $n$-th symmetric power 
is known for $n=2$ by Gelbart--Jacquet \cite{gelbart-jacquet};
for $n=3$ by Kim--Shahidi \cite{kim-shahidi-annals2}; and 
for $n=4$ by Kim \cite{kim-jams}. For certain special forms $\pi$, for 
instance, if $\pi$ is dihedral then it is known for all $n$; see also Kim~\cite{kim-inventiones}. 
There has been recent breakthrough for higher symmetric powers by Clozel and Thorne 
\cite{clozel-thorne-comp} \cite{clozel-thorne-annals}.

The $n$-th symmetric power $L$-function of $\pi$ is expected to be the standard $L$-function for $\GL_{n+1}$ 
attached to the $n$-th symmetric power transfer $\Sym^n(\pi)$, i.e., 
$$
L(s, \Sym^n \otimes \chi, \pi) \ = \ L(s, \Sym^n(\pi) \otimes \chi). 
$$
For the standard $L$-function of $\GL$, see Jacquet~\cite{jacquet-corvallis}. 
We wish to understand the $p$-adic interpolation of the critical values of the 
$n$-th symmetric power $L$-function. There has been extensive work in the case of $n=2$; see, for example, 
Coates--Schmidt~\cite{coates-schmidt}, Schmidt~\cite{schmidt-inv}, and 
Dabrowski--Delbourgo~\cite{dabrowski-delbourgo}. {\it A relatively modest goal of this paper is 
to write down $p$-adic symmetric cube $L$-functions for $\GL_2$ while appealing to Langlands principle of functoriality.}

\medskip
\subsection{\bf Various approaches for symmetric cube $L$-functions}

We consider some approaches to lay one's hands on the twisted 
symmetric cube $L$-function $L(s, \Sym^3(\pi) \otimes \chi)$ attached to a cuspidal automorphic representation 
$\pi$ of $\GL_2$ over $\Q.$ Some of these will lead to a construction of the 
$p$-adic symmetric cube $L$-functions.

\subsubsection{\bf Via triple product $L$-functions}
The most natural environment to see symmetric cube is to consider triple products. Given a two-dimensional vector space $V$, it is easy to see that
$$
V \otimes V \otimes V \ = \ \Sym^3(V) \, \oplus \, (V \otimes \Lambda^2V) \, \oplus \, (V \otimes \Lambda^2V).
$$
Interpreting this in terms of Galois representations, via the local Langlands correspondence, we get the following equality 
of global $L$-functions: 
$$
L(s, \pi \times \pi \times \pi \otimes \chi) \ = \ 
L(s, \Sym^3(\pi) \otimes \chi) \, L(s, \pi \otimes \omega_\pi \chi)^2.
$$
 The $p$-adic $L$-function $L_p(s, \pi \otimes \omega_\pi \chi)$ has been described above. Furthermore, the triple product $p$-adic  $L$-functions have been studied by B\"ocherer and Panchishkin \cite{bocherer-panchishkin}. 
Putting the two together, one should be able to construct the $p$-adic symmetric cube $L$-function. 
Although we will not pursue this theme here, we will consider a very similar line of thought  
below.

\subsubsection{\bf Via the Langlands--Shahidi method}
If one considers the Langlands--Shahidi method, then one can see the symmetric cube $L$-functions as follows: Take a split reductive group $G$ of type ${\bf G}_2$, and consider the parabolic subgroup $P = MN$ where the Levi quotient 
$M$ has the shorter of the two simple roots and the unipotent radical $N$ has the root space corresponding to the longer of the simple roots. Then $M = \GL_2$, and the adjoint representation of $M$ on the Lie algebra of $N$ breaks up as 
$r_1 \oplus r_2$ where 
$r_1 = \Sym^3 \otimes {\rm det}^{-1}$ and $r _2 = {\rm det}.$ Given a cuspidal automorphic representation 
$\pi$ of $\GL_2$ the Langlands $L$-function $L(s, \pi, r_1)$ in this context is nothing but 
$L(s, \Sym^3\pi \otimes \omega_\pi^{-1}).$ (See Kim--Shahidi~\cite[\S\,1]{kim-shahidi-annals1} for more details.) 
However, with the current state of technology, it is not clear (to the authors) if the Langlands--Shahidi method is ready for $p$-adic interpolation.

\subsubsection{\bf Via $L$-functions for $\GL_4$ applied to $\Sym^3(\pi)$}
Using the Langlands principle of functoriality, a direct way to study $L(s, \Sym^3(\pi) \otimes \chi)$ is to study the 
standard $L$-function of $\GL_4 \times \GL_1$ applied to the representation $\Pi := \Sym^3(\pi)$ of $\GL_4$
which, as mentioned above, has been proven to be an automorphic representation, and the twisting character 
$\chi$ which is on $\GL_1.$ This representation $\Pi$ admits what is called a Shalika model, and in such a situation there is a construction of $p$-adic $L$-function due to Ash--Ginzburg \cite{ash-ginzburg}. We will explicate this method 
in Sect.~\ref{sec:lfnGL4} below.

\subsubsection{\bf Via $L$-functions for $\GL_3 \times \GL_2$ applied to $\Sym^2(\pi) \times \pi$}
\label{sec:l-fn-gl3-gl2}
Given a two-dimensional vector space $V$, it is easy to see that
$$
\Sym^2(V) \otimes V  \ = \ \Sym^3(V) \, \oplus \, (V \otimes \Lambda^2V).
$$
Interpreting this in terms of Galois representations, via the local Langlands correspondence, we get the following equality 
of global $L$-functions: 
$$
L(s, \Sym^2(\pi) \times \pi \otimes \chi) \ = \ 
L(s, \Sym^3(\pi) \otimes \chi) \, L(s, \pi \otimes \omega_\pi \chi).
$$
For the left hand side, there has been an extensive study of arithmetic properties of critical values for $L$-functions 
of $\GL_n \times \GL_{n-1}$. See, for example,  \cite{janus-crelle}, \cite{janus-imrn}, \cite{janus-preprint}, 
\cite{kasten-schmidt}, \cite{kms}, \cite{mahnkopf},  \cite{raghuram-imrn}, \cite{raghuram-preprint}, and \cite{schmidt-inv2}. 
To study the $p$-adic interpolation of these critical values, amongst the above references, 
Schmidt~\cite{schmidt-inv2} and Januszewski~\cite{janus-preprint} are particularly relevant. We will explicate this theme in 
\S\,\ref{sec:gl3-gl2} below.

\medskip
\subsection{\bf Cuspidality criterion for symmetric power transfers}
To study arithmetic properties of symmetric power $L$-functions as suggested above, we need to know certain properties of the symmetric power transfers. To begin, we recall the cuspidality criterion for the 
symmetric cube transfer due to Kim and Shahidi~\cite{kim-shahidi-duke}. 

\begin{thm}
\label{thm:kim-shahidi}
Let $\pi$ be a cuspidal automorphic representation of $\GL_2$ over a number field $F$. Then
\begin{enumerate}
\item (Dihedral Case) If $\pi = \pi \otimes \nu$ for some (necessarily quadratic) nontrivial character $\nu$, 
then $\nu$ corresponds to a quadratic extension $E/F$ and $\pi = \pi(\chi)$ the automorphic induction of a Hecke character 
$\chi$ of $E$. In this case, $\Sym^r(\pi)$ is not cuspidal for any $r \geq 2$. The precise isobaric decomposition of $\Sym^3(\pi)$ depends on 
whether $\chi \chi'^{-1}$ factors through the norm map from $E$ to $F$. (See \cite[\S\,2.1]{kim-shahidi-duke}.) 

\smallskip

\item (Tetrahedral Case) 
If $\pi \neq \pi \otimes \nu$ for any $\nu$, but $\Sym^2(\pi) = \Sym^2(\pi) \otimes \mu$ for 
some (necessarily cubic) nontrivial character $\mu$, then 
$$
\Sym^3(\pi) \ = \ (\pi \otimes \omega_\pi\mu) \, \oplus \, (\pi \otimes \omega_\pi\mu^2). 
$$

\smallskip

\item If $\pi \neq \pi \otimes \nu$ and $\Sym^2(\pi) \neq \Sym^2(\pi) \otimes \mu$ for any nontrivial characters $\nu$ or 
$\mu$, then $\Sym^3(\pi)$ is cuspidal. 
\end{enumerate}
\end{thm}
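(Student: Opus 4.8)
The plan is to reduce each case to the cuspidality criterion for the $\Sym^2$ transfer together with the classification of when an isobaric sum fails to be cuspidal. The basic input is Gelbart--Jacquet \cite{gelbart-jacquet}: the $\Sym^2$ transfer $\Sym^2(\pi)$ is always an isobaric automorphic representation of $\GL_3$, and it is cuspidal unless $\pi$ is dihedral, i.e.\ unless $\pi = \pi \otimes \nu$ for some nontrivial (quadratic) character $\nu$. The second input is Kim--Shahidi \cite{kim-shahidi-annals2} for the $\Sym^3$ transfer itself: $\Sym^3(\pi)$ is an isobaric automorphic representation of $\GL_4$, and one then has to identify its isobaric constituents in the non-cuspidal cases. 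Throughout, one uses that an isobaric automorphic representation of $\GL_n$ is cuspidal if and only if it does not decompose as a nontrivial isobaric sum, and that the decomposition can be detected via the poles of Rankin--Selberg $L$-functions $L(s, \Sym^3(\pi) \times \sigma^\vee)$ for cuspidal $\sigma$ of smaller rank.

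First I would treat the dihedral case (1). If $\pi = \pi(\chi) = \mathrm{AI}_{E/F}(\chi)$ is the automorphic induction of a Hecke character $\chi$ of the quadratic extension $E/F$ cut out by $\nu$, then at the level of (conjectural or, here, genuinely constructed) parameters one has $\sigma(\pi) = \mathrm{Ind}_{W_E}^{W_F}(\chi)$, a two-dimensional induced representation. Taking $\Sym^3$ of an induced-from-index-two representation and decomposing it as a sum of characters and inductions — a purely group-theoretic computation with $\chi$ and its conjugate $\chi'$ — yields the isobaric decomposition; whether $\Sym^3(\pi)$ splits further into four Hecke characters or into two automorphic inductions of characters of $E$ depends precisely on whether $\chi{\chi'}^{-1}$ factors through the norm $N_{E/F}$. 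In any event $\Sym^2(\pi)$ already contains the character $\nu$ as a constituent, so $\Sym^r(\pi)$ is visibly non-cuspidal for every $r \geq 2$; one checks this directly from the decomposition $\sigma(\pi)^{\otimes r}$ restricted to the relevant symmetric power.

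Next, the tetrahedral case (2): assume $\pi$ is not dihedral, so $\Sym^2(\pi)$ is cuspidal on $\GL_3$, but $\Sym^2(\pi) = \Sym^2(\pi) \otimes \mu$ for a nontrivial character $\mu$, which must be cubic (apply the twisting relation three times, or compare central characters). The key identity is the $\GL_4$ decomposition arising from $\Sym^2(V) \otimes V = \Sym^3(V) \oplus (V \otimes \Lambda^2 V)$, which at the $L$-function level reads $L(s, \Sym^2(\pi) \times \pi \otimes \chi) = L(s, \Sym^3(\pi) \otimes \chi)\, L(s, \pi \otimes \omega_\pi \chi)$ as in \S\ref{sec:l-fn-gl3-gl2}. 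Since $\Sym^2(\pi)$ is self-twist by $\mu$, the Rankin--Selberg $L$-function $L(s, \Sym^2(\pi) \times \pi)$ acquires extra poles: $\Sym^2(\pi) \cong \Sym^2(\pi)\otimes\mu$ forces $\pi \otimes \omega_\pi\mu$ and $\pi \otimes \omega_\pi\mu^2$ to each appear in $\Sym^3(\pi)$, and a rank count ($2 + 2 = 4$) shows these exhaust it, giving $\Sym^3(\pi) = (\pi \otimes \omega_\pi\mu) \oplus (\pi \otimes \omega_\pi\mu^2)$. Conversely, in case (3), where neither self-twist occurs, one shows $L(s, \Sym^3(\pi) \times \tau^\vee)$ is holomorphic for every cuspidal $\tau$ on $\GL_1, \GL_2, \GL_3$ — the $\GL_1$ and $\GL_3$ cases because $\Sym^3(\pi)$ has the wrong central character / infinitesimal character to contain such a constituent, and the $\GL_2$ case precisely because the absence of a cubic self-twist of $\Sym^2(\pi)$ kills the pole identified above — whence $\Sym^3(\pi)$ is cuspidal.

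The main obstacle is the bookkeeping in the two degenerate cases: one must rule out \emph{all} possible non-cuspidal shapes for $\Sym^3(\pi)$, not merely exhibit one constituent, and this requires knowing that the only sources of poles of $L(s,\Sym^3(\pi)\times\tau^\vee)$ are the self-twist phenomena already catalogued. Here one leans on the isobaric classification (strong multiplicity one for isobaric representations) together with the analytic properties of Rankin--Selberg $L$-functions for $\GL_4 \times \GL_m$; the subtlety in case (1) is purely combinatorial, tracking how $\chi{\chi'}^{-1} = N_{E/F}^*(\text{something})$ or not changes the decomposition, and this is exactly the content of \cite[\S\,2.1]{kim-shahidi-duke} that the statement defers to.
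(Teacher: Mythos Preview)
The paper does not actually prove this theorem; immediately after the statement it writes ``See \cite[Thm.\,2.2.2]{kim-shahidi-duke}'' and points to the discussion of polyhedral types in \S\,3.3 of that reference. So there is no in-paper argument to compare against. Your proposal is, in outline, a faithful sketch of the Kim--Shahidi argument itself: use the factorization coming from $\Sym^2(V)\otimes V=\Sym^3(V)\oplus(V\otimes\Lambda^2 V)$ together with the Gelbart--Jacquet criterion for $\Sym^2$, and detect constituents of $\Sym^3(\pi)$ via poles of Rankin--Selberg $L$-functions $L(s,\Sym^3(\pi)\times\sigma^\vee)$ for cuspidal $\sigma$ on $\GL_m$, $m\le 3$.

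Two places would need more care if you were to write this out in full. In case~(2), the step ``$\Sym^2(\pi)\cong\Sym^2(\pi)\otimes\mu$ forces $\pi\otimes\omega_\pi\mu$ and $\pi\otimes\omega_\pi\mu^2$ to appear in $\Sym^3(\pi)$'' is the crux and is not immediate from what you wrote: one has to push the pole of $L(s,{\rm Ad}(\pi)\times{\rm Ad}(\pi))$ through the factorization and verify separately that $\pi\not\cong\pi\otimes\mu$ (this uses that $\mu$ is cubic, so it is not the quadratic self-twist ruled out by the non-dihedral hypothesis). In case~(3), dismissing a $\GL_1$ or $\GL_3$ constituent by ``wrong central/infinitesimal character'' is too fast: the $\GL_1$ case is really the entireness of twisted symmetric cube $L$-functions \cite{kim-shahidi-annals1}, and the $\GL_3$ case needs a genuine argument excluding a shape $\Sym^3(\pi)=\sigma\boxplus\chi$ with $\sigma$ cuspidal on $\GL_3$. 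These are precisely the points where \cite{kim-shahidi-duke} does the work, so your deferral to that reference at the end is appropriate.
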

See \cite[Thm.\,2.2.2]{kim-shahidi-duke}. See also the discussion of the various polyhedral types towards the end of 
\S\,3.3 in {\it loc.\,cit.} In short, we may write 
\begin{equation}
\begin{split}
\Sym^2(\pi) \ \mbox{is cuspidal}  & \ \iff \ \pi \ \mbox{is not dihedral, and} \\
\Sym^3(\pi) \ \mbox{is cuspidal}  & \ \iff \ \pi \ \mbox{is neither dihedral nor tetrahedral.} 
\end{split}
\end{equation}

\medskip
\subsection{\bf The property of being cohomological for symmetric power transfers}

To put ourselves in an arithmetic context, we need to work with representations which contribute to cohomology. 
We quote the following theorem proved in \cite{raghuram-preprint} that a symmetric power transfer of a cohomological representation is again of 
cohomological type. For this section, we follow the notations as in \cite{raghuram-preprint}. 

Let $\mu \in X^+(T_2)$ be a dominant integral weight for $\GL_2/\Q$ and let $M_\mu$ be the finite-dimensional irreducible representation of $\GL_2(\C)$ of highest weight $\mu.$  
Suppose $\mu = (a, b) \in \Z^2$ with $a \geq b.$  Define a weight 
$\Sym^r(\mu) \in X^+(T_{r+1})$ as $\Sym^r(\mu) \ := \ (ra, (r-1)a + b, \dots, a + (r-1)b, rb).$
If ${\sf w} = {\sf w}(\mu) = a + b$ be the purity weight of $\mu$, then it is easy to check that $\Sym^r(\mu)$ is also pure 
(see \cite{raghuram-preprint} for purity), 
and it's purity weight is ${\sf w}(\Sym^r(\mu)) = r {\sf w}.$

\begin{thm}
\label{thm:sym-coh}
Let $\mu \in X^+(T_2)$ and $\pi \in \Coh(G_2, \mu^{\sf v})$, i.e., $\pi$ is a cuspidal automorphic representation of 
$\GL_2(\A)$ such that $\pi_\infty \otimes M_\mu^{\sf v}$ has nontrivial relative Lie algebra cohomology. 
Suppose $\Sym^r(\pi)$ is a cuspidal automorphic representation of $G_{r+1}$, then 
$\Sym^r(\pi) \in \Coh(G_{r+1}, \Sym^r(\mu)^{\sf v}).$
\end{thm}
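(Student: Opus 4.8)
The plan is to reduce the statement to a purely local calculation at the archimedean place, using the fact that being cohomological with respect to $M_\mu^{\sf v}$ is detected by the infinitesimal character (equivalently, by the Langlands parameter at infinity) together with tempered-ness and the fact that $\Sym^r(\pi)$ is cuspidal, hence generic. Concretely, I would first recall the Clozel-style purity/regularity criterion: a cuspidal automorphic representation $\Pi$ of $\GL_N(\A)$ is in $\Coh(G_N, \lambda^{\sf v})$ for some dominant integral $\lambda$ if and only if $\Pi_\infty$ has the same infinitesimal character as $M_\lambda^{\sf v}$, and this forces $\Pi_\infty$ to be a (cohomologically-induced / Langlands quotient of a) specific principal series determined by $\lambda$. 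So the whole theorem comes down to: given that $\pi_\infty$ has infinitesimal character matching $M_\mu^{\sf v}$, compute the infinitesimal character of $\Sym^r(\pi_\infty)$ and check it equals that of $M_{\Sym^r(\mu)^{\sf v}}$.

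Next I would carry out that archimedean computation. Write the Langlands parameter $\sigma(\pi_\infty): W_\R \to \GL_2(\C)$; by the cohomology hypothesis it is the induction of a character of $W_\C = \C^\times$ of the form $z \mapsto z^{p}\bar z^{q}$ with $\{p,q\}$ the Hodge-type data read off from $\mu = (a,b)$, namely $p - q = a - b + 1$ and $p + q = -{\sf w}$ (up to the usual twist conventions one must pin down). The infinitesimal character of $\pi_\infty$ is then the pair $\{p, q\}$ viewed inside $\C^2/\fS_2$. Applying $\Sym^r$ to this parameter gives the $(r+1)$-dimensional parameter whose Hodge data are $\{ (r-j)p + j q : 0 \le j \le r\}$, i.e. the infinitesimal character is the multiset $\{(r-j)p + jq\}_{j=0}^r$. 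On the other side, $\Sym^r(\mu) = (ra, (r-1)a+b, \dots, rb)$, and one computes directly that the infinitesimal character attached to $M_{\Sym^r(\mu)^{\sf v}}$ — obtained by adding $\rho_{r+1} = (\tfrac{r}{2}, \tfrac{r}{2}-1, \dots, -\tfrac{r}{2})$ to $-\Sym^r(\mu)$ reversed, or whatever the precise normalization is — is exactly the same multiset $\{(r-j)p + jq\}$. This is a short but bookkeeping-heavy identity in the entries of the weight, and checking the $\rho$-shifts line up is the one place where signs and conventions must be handled with care; purity of $\Sym^r(\mu)$ with purity weight $r{\sf w}$ (already noted before the theorem) is what guarantees the two multisets are centered compatibly. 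I would phrase this as: the parameter of $\Sym^r(\pi)_\infty = \Sym^r(\pi_\infty)$ is, by construction via local Langlands, the composite $\Sym^r \circ \sigma(\pi_\infty)$, and its infinitesimal character matches $M_{\Sym^r(\mu)^{\sf v}}$ by the displayed computation.

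Finally I would assemble: since $\Sym^r(\pi)$ is assumed cuspidal, it is generic at every place, so $\Sym^r(\pi)_\infty$ is the (generic, i.e.\ full induced) representation with the computed infinitesimal character; a generic cohomological representation is determined by its infinitesimal character, and the matching above shows it is cohomological with respect to $M_{\Sym^r(\mu)^{\sf v}}$. Invoking the cohomology criterion in the reverse direction (a tempered/generic unitary rep of $\GL_{r+1}(\R)$ with regular integral infinitesimal character equal to that of $M_{\Sym^r(\mu)^{\sf v}}$ does contribute to $(\g,K)$-cohomology with coefficients in $M_{\Sym^r(\mu)^{\sf v}}$) gives $\Sym^r(\pi) \in \Coh(G_{r+1}, \Sym^r(\mu)^{\sf v})$. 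The main obstacle, as indicated, is not conceptual but the precise normalization of the archimedean Langlands parameter and the $\rho$-shift in passing between highest weights and infinitesimal characters; everything else is formal once one cites the structure theory of cohomological representations of $\GL_n(\R)$ and the functoriality of local Langlands under $\Sym^r$. I would therefore reference \cite{raghuram-preprint} for the conventions and present the weight identity $\Sym^r(\mu)^{\sf v} \leftrightarrow \{(r-j)p + jq\}_{j=0}^r$ as the heart of the argument.
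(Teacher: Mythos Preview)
The paper does not actually prove this theorem: it is stated with the preamble ``We quote the following theorem proved in \cite{raghuram-preprint}'' and no argument is given here. So there is no in-paper proof to compare against.

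That said, your outline is the standard and correct route, and it is essentially what is carried out in \cite{raghuram-preprint}: cohomologicality is an archimedean condition, so one reduces to computing the Langlands parameter of $\Sym^r(\pi)_\infty = \Sym^r(\pi_\infty)$ via local Langlands and checking that its infinitesimal character matches that of $M_{\Sym^r(\mu)}^{\sf v}$; cuspidality (hence genericity) of $\Sym^r(\pi)$ then pins down $\Sym^r(\pi)_\infty$ as the unique generic representation with that infinitesimal character, which is cohomological. Your identification of the bookkeeping with $\rho$-shifts and purity as the only delicate point is accurate.
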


\medskip
\subsection{\bf Near ordinarity of symmetric powers of a modular motive $M(f)$}
Recall from \S\,\ref{sec:modular-motive} the pure motive $M(f)$ of weight $k-1$ attached 
to an eigenform $f \in S_k(N,\epsilon).$ The Hodge numbers of $M(f)$ are 
$h^{(0,k-1)}=h^{(k-1,0)}=1.$
If the roots of the Hecke polynomial at $p$ of $M(f)$ 
are $\alpha$ and $\beta$, then the roots of the Hecke polynomial of $\Sym^3(M(f))$ are 
$\alpha^3$, $\alpha^2 \beta$, $\alpha\beta^2$ and $\beta^3$. 
The following proposition asserts that if $M(f)$ is nearly $p$-ordinary then the  
motive $\Sym^3(M(f))$ (see Deligne \cite{Deligne}) attached to $\Sym^3$ transfer of automorphic representation $M(f)$ is also nearly $p$-ordinary. 

\begin{prop}
\label{nearsym}
If $M(f)$ is nearly $p$-ordinary  then $\Sym^3(M(f))$ is also nearly $p$-ordinary.
\end{prop}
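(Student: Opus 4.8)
The plan is to unwind the definitions of the Hodge polygon and the $p$-Newton polygon for $\Sym^3(M(f))$ and check they coincide, assuming only that they coincide for $M(f)$ itself. Recall from \S\,\ref{sec:modular-motive} that $M(f)$ has weight $k-1$, Hodge numbers $h^{(0,k-1)} = h^{(k-1,0)} = 1$, Hecke polynomial roots $\alpha, \beta$ with $\alpha\beta = \epsilon(p)p^{k-1}$, and near $p$-ordinarity of $M(f)$ amounts to $v_p(\alpha) = 0$ (so $v_p(\beta) = k-1$, after ordering). I would first write down the Hodge structure of $\Sym^3(M(f))$: it is pure of weight $w = 3(k-1)$, with one-dimensional Hodge pieces in bidegrees $(0, 3(k-1))$, $(k-1, 2(k-1))$, $(2(k-1), k-1)$, $(3(k-1), 0)$. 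Hence $d = 4$ and the Hodge polygon $P_H(\Sym^3 M(f))$ is the convex polygon on $[0,4]$ through the points $(0,0), (1,0), (2, k-1), (3, 3(k-1)), (4, 6(k-1))$, the slopes being $0, k-1, 2(k-1), 3(k-1)$.

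Next I would compute the $p$-Newton polygon. The roots of the Hecke polynomial of $\Sym^3(M(f))$ are $\alpha^3, \alpha^2\beta, \alpha\beta^2, \beta^3$, with $p$-adic valuations $0, k-1, 2(k-1), 3(k-1)$ respectively (using $v_p(\alpha) = 0$, $v_p(\beta) = k-1$). After ordering by increasing valuation these are already sorted, so $P_{N,p}(\Sym^3 M(f))$ is the polygon through $(0,0), (1,0), (2, k-1), (3, 3(k-1)), (4, 6(k-1))$ — exactly the same vertices as the Hodge polygon. Therefore $P_{N,p}(\Sym^3 M(f)) = P_H(\Sym^3 M(f))$, which by the definition of nearly $p$-ordinary (Hida) is precisely the assertion. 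I would also note that one must check the Newton polygon is well-defined here, i.e.\ that $M(f)$, hence $\Sym^3(M(f))$, has good reduction at $p$ — this is the standing assumption $N$ prime to $p$, under which the inertia invariants in \eqref{Heckemotive} are the full space and $Z_p(X, \Sym^3 M(f))$ has degree $4$.

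The only real subtlety, and the step I would flag as the main obstacle, is the existence and expected shape of the motive $\Sym^3(M(f))$: one is implicitly invoking Deligne's formalism \cite{Deligne} to produce a motive whose Betti realization carries the symmetric-cube Hodge structure and whose Hecke polynomial at $p$ has the claimed roots. Granting this (as the statement does, by citing \cite{Deligne}), the proof is a direct comparison of two piecewise-linear functions that, by the computation above, pass through identical vertices. A cleaner way to organize the verification is to observe that in both polygons the $i$-th slope equals $(i-1)(k-1)$ for $i = 1, \dots, 4$: on the Hodge side this is because the nonzero Hodge bidegrees are $(j(k-1), (3-j)(k-1))$ for $j = 0,1,2,3$ and purity forces the slope increments to be $k-1$; on the Newton side it is because near $p$-ordinarity of $M(f)$ makes $v_p(\alpha^{3-j}\beta^j) = j(k-1)$. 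Matching slopes together with matching the common endpoint $(0,0)$ forces the polygons to agree.
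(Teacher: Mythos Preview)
Your proof is correct and follows essentially the same approach as the paper's: compute the Hodge polygon and the $p$-Newton polygon of $\Sym^3(M(f))$ explicitly and check they pass through the same five points $(0,0),(1,0),(2,k-1),(3,3(k-1)),(4,6(k-1))$. The only cosmetic difference is that the paper computes the Newton polygon by first evaluating $v_p(A_i)$ for the elementary symmetric functions $A_i$ in $\alpha^3,\alpha^2\beta,\alpha\beta^2,\beta^3$, whereas you read off the slopes directly from $v_p(\alpha^{3-j}\beta^j)=j(k-1)$; both routes are equivalent and yours is arguably cleaner.
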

\begin{proof}
If $\alpha$ and $\beta$ are roots of the Hecke polynomial at $p$ for $f$, 
then 
the roots of the Hecke polynomial at $p$ of $\Sym^3(M(f))$ are 
$\alpha^3$, $\alpha^2 \beta$, $\alpha \beta^2$ and $\beta^3$.  Recall, $M(f)$ is nearly $p$-ordinary 
if and only if $v_p(a_p)=0$. 
For $\Sym^3(M(f))$, the coefficients of the Hecke polynomial are 
\begin{eqnarray*}
A_1 \ = \ & a_p(a_p^2-2 \epsilon(p)p^{k-1}),  \\
A_2 \ = \ & \epsilon(p)p^{k-1}[a_p^2-2(\epsilon(p)p^{k-1})^2+a_p^2(\epsilon(p)p^{k-1})^2], \\ 
A_3 \ = \ & (\epsilon(p)p^{k-1})^3[(\alpha+\beta)^3-3 \alpha_p \beta(\alpha+\beta)+\alpha^2\beta^2)], \\
A_4 \ = \ & \alpha^6\beta^6
\end{eqnarray*}
For any two element $\alpha$ and $\beta$ of $\C_p$ with $v_p(\alpha) \neq v_p(\beta)$, we have
$v_p(\alpha+\beta)=\mathrm{min}(v_p(\alpha), v_p(\beta))$.  
A small check shows that $v_p(A_1)=0$, $v_p(A_2)=k-1+2 v_p(a_p)=k-1$,  $v_p(A_3)=3(k-1)$
and  $v_p(A_4)=6(k-1)$. The $p$-Newton polygon of $\Sym^3(M(f))$ consists of the line segments joining the points
$$
(0,0), \ (1,0),  \ (2, k-1), \ (3, 3k-3), \ (4, 6k-6).
$$
Next, the Hodge types of $\Sym^3(M(f))$ are 
$(0, 3(k-1)), ((k-1), 2(k-1)), (2(k-1), (k-1)), (3(k-1),0)$ and all the nonzero Hodge numbers are $1.$ Hence, 
the Hodge polygon of $\Sym^3(M(f))$ also consists of the line segments joining the same set of points: 
$(0,0), \, (1,0),  \, (2, k-1), \, (3, 3k-3), \, (4, 6k-6)$. Hence,  $\Sym^3(M(f))$ is nearly $p$-ordinary.
\end{proof}

\begin{remark}
The same method can be applied to show that for any 
$m \geq 4$  the motive $\Sym^m(M(f))$ is nearly $p$-ordinary if $M(f)$ is nearly $p$-ordinary. For $\Sym^m(M(f))$, the roots of the Hecke polynomial at $p$ are 
$\alpha^m, \alpha^{m-1} \beta,\, \cdots,\,\beta^m$ if $\alpha$ and $\beta$ are roots of the Hecke polynomial at $p$ of $f$. The $p$-Newton polygon of $\Sym^m(M(f))$ consists of the line segments joining 
$$
(0,0), \ (1,0),  \ (2, k-1), \ (3, 3k-3), \ (4, 6k-6), \dots, (m+1, (k-1)(1+2+3+ \cdots +m)).
$$
The Hodge types of $\Sym^m(M(f))$ are 
$(0, m(k-1)), ((k-1), (m-1)(k-1)),\, \cdots ,\, (m(k-1),0)$ and all the nonzero Hodge numbers are $1.$ 
The Hodge polygon of $\Sym^m(M(f))$  is the line segments joining 
$(0,0),\,(1,0),\, (2, k-1), \, (3, 3k-3),\,\cdots,\,(m+1, \frac{(k-1)m(m+1)}{2})$. Hence, the  conjectural motive $\Sym^m(M(f))$ is nearly $p$-ordinary for all $m$.
\end{remark}

\bigskip
\section{$p$-adic $L$-functions for $\GL_4$}
\label{sec:lfnGL4}

\medskip
\subsection{\bf Shalika models and $L$-functions for ${\rm GL}_4$}
\label{sect:globalShalikamodels}
The following is a summary of a certain analytic theory of the standard $L$-function attached to a cuspidal
automorphic representation $\Pi$ of $\GL_4$ over $\Q$ which admits a Shalika model. 
The presentation is based on \cite[\S\,3.1--3.3]{grobner-raghuram}. Since we want to focus on the symmetric 
cube $L$-function, we will exclusively work with $\GL_4$ in this section. 

\medskip
\subsubsection{\bf Global Shalika models and exterior square $L$-functions}
Let
$$ 
S:=\left\{
s =  \left( \!\!\begin{array}{ccc}
h &  0 \\
0 &  h
\end{array}\!\!\right)
\left( \!\!\begin{array}{ccc}
1 &  X\\
0 &  1
\end{array}\!\!\right) \Bigg|
\begin{array}{l}
h\in \GL_2\\
X\in {\rm M}_2
\end{array}\right\}\subset G =: \GL_{4}.
$$
It is traditional to call $S$ the Shalika subgroup of $G$. Let $\psi : \Q \backslash \A \to \C^\times$ be a 
nontrivial additive character which is fixed once and for all. Let $\eta : \Q^\times \backslash \A^\times \to \C^\times$ 
be a Hecke character of $\Q$. These characters can be extended to a character of $S(\A)$:
$$
s=\left(\!\! \begin{array}{ccc}
h &  0\\
0 &  h
\end{array}\!\!\right)\left(\!\! \begin{array}{ccc}
1 &  X\\
0 &  1
\end{array}\!\!\right) \mapsto (\eta \otimes \psi)(s) := \eta(\det(h))\psi(Tr(X)).
$$
We will also denote $\eta(s) = \eta(\det(h))$ and $\psi(s) = \psi(Tr(X))$. 

Let $\Pi$ be a cuspidal automorphic representation of $\GL_4/\Q.$ Assume that $\eta^2 = \omega_\Pi$. 
For a cusp form $\varphi\in \Pi$ and $g\in G(\A)$, consider the integral
$$
S^\eta_\psi(\varphi)(g):=\int_{Z_{G}(\A)S(F)\backslash S(\A)} (\Pi(g)\cdot\varphi)(s)\eta^{-1}(s)\psi^{-1}(s) ds.
$$
It is well-defined and hence yields a function
$\mathcal{S}^\eta_\psi(\varphi): G(\A)\rightarrow\C$ satisfying 
$S^\eta_\psi(\varphi)(sg) = \eta(s)\cdot\psi(s)\cdot S^\eta_\psi(\varphi)(g),$
for all $g\in G(\A)$ and $s\in S(\A)$. The following theorem 
due to Jacquet and Shalika~\cite[Thm.\,1]{jacshal}
gives a necessary and sufficient condition for 
$S^\eta_\psi$ being non-zero.

\begin{thm}
\label{thm:JS}
The following assertions are equivalent:
\begin{enumerate}
\item[(i)] There is a $\varphi\in\Pi$ and $g\in G(\A)$ such that $S^\eta_\psi(\varphi)(g)\neq 0$.
\item[(ii)] $S^\eta_\psi$ defines an injection of $G(\A)$-modules
$
\Pi\hookrightarrow \textrm{\emph{Ind}}_{S(\A)}^{G(\A)}[\eta\otimes\psi].
$
\item[(iii)] Let $S$ be any finite set of places containing $S_{\Pi,\eta}$.
The twisted partial exterior square $L$-function
$$
L^S(s,\Pi,\wedge^2\otimes\eta^{-1}):=\prod_{v\notin S} L(s,\Pi_v,\wedge^2\otimes\eta^{-1}_v)
$$
has a pole at $s=1$.
\end{enumerate}
\end{thm}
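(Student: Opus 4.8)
\smallskip
\noindent\emph{Proof strategy.} The equivalence of $(i)$ and $(ii)$ I would treat as formal. By the transformation rule $S^\eta_\psi(\varphi)(sg)=\eta(s)\psi(s)\,S^\eta_\psi(\varphi)(g)$ together with equivariance under right translation by $G(\A)$, the assignment $\varphi\mapsto S^\eta_\psi(\varphi)$ is a morphism of $G(\A)$-modules $\Pi\to\Ind_{S(\A)}^{G(\A)}[\eta\otimes\psi]$; since $\Pi$ is irreducible its kernel is either $0$ or all of $\Pi$, so non-vanishing and injectivity coincide, and $(ii)\Rightarrow(i)$ is trivial. Everything then reduces to matching the non-vanishing of $S^\eta_\psi$ with the analytic assertion $(iii)$, which I would do through the Jacquet--Shalika global integral representing the exterior square $L$-function.

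Next I would set up that integral. For $\varphi\in\Pi$ and a Schwartz--Bruhat function $\phi$ on $\A^2$, let $E(h,s,\phi)$ be the mirabolic Eisenstein series on $\GL_2$ attached to $\phi$, and consider (up to normalization and a rearrangement by a Weyl element, immaterial for the residue computation below)
\[
Z(s,\varphi,\phi)\ :=\ \int_{Z_{\GL_2}(\A)\GL_2(\Q)\backslash\GL_2(\A)}\left(\int_{{\rm M}_2(\Q)\backslash{\rm M}_2(\A)}\varphi\!\left(\begin{pmatrix}1&X\\0&1\end{pmatrix}\begin{pmatrix}h&0\\0&h\end{pmatrix}\right)\psi^{-1}({\rm Tr}\,X)\,dX\right)\eta^{-1}(\det h)\,E(h,s,\phi)\,dh,
\]
the $2\times 2$ block ``$1$'' denoting $I_2$. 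The hypothesis $\eta^2=\omega_\Pi$ is precisely what renders the integrand invariant under the centre $Z_{\GL_2}(\A)$, so $Z$ is well defined; cuspidality of $\varphi$ makes the inner period rapidly decreasing on $\GL_2(\Q)\backslash\GL_2(\A)$, so the outer integral converges wherever $E(\,\cdot\,,s,\phi)$ is defined and $Z$ inherits the meromorphic continuation of the Eisenstein series.

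The argument would then rest on two properties of $Z$. First, for $\mathrm{Re}(s)$ large I would unfold the $\GL_2(\Q)\backslash\GL_2(\A)$-integral against the sum defining $E$ and use the Fourier--Whittaker expansion of $\varphi$ (cuspidal representations of $\GL_4$ being globally generic) to obtain an Euler product; an unramified local computation would identify its generic factor with $L(s,\Pi_v,\wedge^2\otimes\eta_v^{-1})$ up to a normalizing partial zeta factor $D(s)$ holomorphic and non-zero near $s=1$, giving $Z(s,\varphi,\phi)=L^S(s,\Pi,\wedge^2\otimes\eta^{-1})\,D(s)\prod_{v\in S}\Psi_v(s,W_{\varphi,v},\phi_v)$ with $\Psi_v$ local zeta integrals at the places of $S$ that can be arranged holomorphic and non-vanishing at $s=1$. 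Second, $E(h,s,\phi)$ has a simple pole at $s=1$ whose residue is constant on the domain of integration and proportional to $\widehat\phi(0)$, so that (moving $\mathrm{Res}_{s=1}$ inside, legitimate by the rapid decay of the inner period) $\mathrm{Res}_{s=1}Z(s,\varphi,\phi)=c\,\widehat\phi(0)\,S^\eta_\psi(\varphi)(e)$ with $c\neq0$. Comparing the two would finish it: if $S^\eta_\psi$ is non-zero I pick $\varphi$ with $S^\eta_\psi(\varphi)(e)\neq0$ and $\phi$ with $\widehat\phi(0)\neq0$, so that $Z$, hence $L^S(s,\Pi,\wedge^2\otimes\eta^{-1})$, has a pole at $s=1$; conversely, if $L^S(s,\Pi,\wedge^2\otimes\eta^{-1})$ has a pole at $s=1$ I choose data making $D$ and the $\Psi_v$ finite and non-zero there, so $Z$ has a pole at $s=1$ and its residue --- hence $S^\eta_\psi(\varphi)(e)$ for that $\varphi$ --- is non-zero.

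The hard part I expect to be the local input invoked above: carrying out the unramified zeta-integral computation and matching it with the local $\wedge^2\otimes\eta_v^{-1}$ factor (a Casselman--Shalika-type manipulation of Whittaker functions), and then showing that the finitely many local integrals $\Psi_v$ at the ramified and archimedean places, together with $D(s)$, can be taken holomorphic and non-vanishing at $s=1$, so that the pole of the global integral occurs exactly when the \emph{partial} $L$-function does. The remaining points --- absolute convergence justifying the unfolding for $\mathrm{Re}(s)\gg0$, and the interchange of $\mathrm{Res}_{s=1}$ with the defining integral --- I would treat as routine, resting on the rapid decay of cusp forms.
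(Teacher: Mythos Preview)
The paper does not prove this theorem; it simply attributes it to Jacquet and Shalika \cite[Thm.\,1]{jacshal} and states it without argument. So there is no ``paper's own proof'' to compare against.

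That said, your sketch is the right one and is essentially the Jacquet--Shalika argument. The formal equivalence $(i)\Leftrightarrow(ii)$ via irreducibility is exactly as you say. For $(i)\Leftrightarrow(iii)$, the global integral you write down is (up to normalization) the Jacquet--Shalika zeta integral for the twisted exterior square; its unfolding to an Euler product with unramified factor $L(s,\Pi_v,\wedge^2\otimes\eta_v^{-1})$, and the identification of the residue at $s=1$ with the Shalika period at the identity, are precisely the two inputs you isolate. Two small points worth making explicit: first, to pass from ``$S^\eta_\psi(\varphi)(g)\neq 0$ for some $g$'' to ``$S^\eta_\psi(\varphi')(e)\neq 0$ for some $\varphi'$'' you are silently using $S^\eta_\psi(\Pi(g)\varphi)(e)=S^\eta_\psi(\varphi)(g)$; second, the claim that the ramified and archimedean local integrals $\Psi_v$ can be chosen holomorphic and non-vanishing at $s=1$ is genuine work (it is the content of the local theory in \cite{jacshal}), not a formality, so flag it as an input rather than a routine step.
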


\begin{definition}
\label{defn:shalika-model}
If $\Pi$ satisfies any one, and hence all, of the equivalent conditions of Thm.\ \ref{thm:JS}, then we say that 
$\Pi$ {\it has an $(\eta,\psi)$-Shalika model}, and
we call the isomorphic image $S^\eta_\psi(\Pi)$ of $\Pi$ under $S^\eta_\psi$ a {\it global $(\eta,\psi)$-Shalika model} of $\Pi$. 
We will sometimes suppress the choice of the characters $\eta$ and $\psi$ and simply say that $\Pi$ has a Shalika model.
\end{definition}

\medskip
\subsubsection{{\bf Period integrals over $\GL_2 \times \GL_2$}}
The following proposition, due to Friedberg and Jacquet~\cite[Prop.\,2.3]{friedjac}, 
relates the period-integral over $H :=  \GL_2 \times \GL_2 \subset G$ 
of a cusp form $\varphi$ of $G(\A)$ to a certain zeta-integral of the
function $S^\eta_\psi(\varphi)$ in the Shalika model corresponding to $\varphi$ over one copy of $\GL_2$ in $H.$

\begin{prop}\label{prop:FJ}
Let $\Pi$ have an $(\eta,\psi)$-Shalika model. For a cusp form $\varphi\in\Pi$, consider the integral
$$
\Psi(s,\varphi):=\int_{Z_G(\A)H(\Q)\backslash H(\A)} \varphi\left(\!\!\left(\!\! \begin{array}{ccc}
h_1 &  0\\
0 &  h_2
\end{array}\!\!\right)\!\!\right)\Bigg|\frac{\det(h_1)}{\det(h_2)}\Bigg|^{s-1/2}\eta^{-1}(\det(h_2))\, d(h_1,h_2).
$$
Then, $\Psi(s,\varphi)$ converges absolutely for all $s\in\C$. Next, consider the integral
$$
\zeta(s,\varphi):=\int_{\GL_n(\A)} S^\eta_\psi(\varphi)\left(\!\!\left(\!\! \begin{array}{ccc}
g_1 &  0\\
0 &  1
\end{array}\!\!\right)\!\!\right) |\det(g_1)|^{s-1/2} \, dg_1.
$$
Then, $\zeta(s,\varphi)$ is absolutely convergent for $\Re(s)\gg 0$.
Further, for $\Re(s)\gg 0$, we have
$$
\zeta(s,\varphi)=\Psi(s,\varphi),
$$
which provides an analytic continuation of $\zeta(s,\varphi)$ by setting $\zeta(s,\varphi)=\Psi(s,\varphi)$ for all $s\in\C$.
\end{prop}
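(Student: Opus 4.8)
\emph{Plan.} This is a classical Rankin--Selberg unfolding, and I would organise it in three steps: convergence of the two integrals, the unfolding identity for $\Re(s)\gg0$, and then the continuation.

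\noindent\textbf{Convergence.} Since $\varphi$ is a cusp form on $G=\GL_4$ it is rapidly decreasing on $Z_G(\A)G(\Q)\backslash G(\A)$. On a Siegel set for $H$ inside $G$ the one-parameter subgroup $\mathrm{diag}(t,t,1,1)$ lands, after adjusting by the centre and a Weyl element of $G(\Q)$, in a Siegel cone of $G$ for \emph{both} $t\to0$ and $t\to\infty$; hence $\varphi(\mathrm{diag}(h_1,h_2))$ decays faster than any power of $|\det h_1/\det h_2|^{\pm1}$, whereas $|\det h_1/\det h_2|^{s-1/2}\eta^{-1}(\det h_2)$ has at most polynomial growth there. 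So $\Psi(s,\varphi)$ converges absolutely for every $s\in\C$. For $\zeta(s,\varphi)$, the $\psi$-equivariance of $S^\eta_\psi(\varphi)$ along the block-unipotent forces $g_1\mapsto S^\eta_\psi(\varphi)(\mathrm{diag}(g_1,1_2))$ to decay rapidly as $g_1$ leaves compacta in the $\GL_2$-torus, and with a standard gauge estimate this gives absolute convergence of $\zeta$ for $\Re(s)$ large (this also falls out of the unfolding below performed with absolute values, majorised by the convergent integral for $\Psi$).

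\noindent\textbf{The unfolding.} Let $P_0=M_0N_0\subset G$ be the standard parabolic with Levi $M_0=H$ and unipotent radical $N_0=\{u(X):X\in\mathrm{M}_2\}$, $u(X)=\left(\begin{smallmatrix}1_2 & X\\ 0 & 1_2\end{smallmatrix}\right)$. The structural point is that the product map $S\times\{\mathrm{diag}(g_1,1_2)\}\to P_0$ is an isomorphism of varieties (both sides have dimension $12$ and the two subgroups meet trivially). Substituting the definition of $S^\eta_\psi(\varphi)$ into $\zeta(s,\varphi)$ and amalgamating the $\GL_2(\A)$-integral with the one over $Z_G(\A)S(\Q)\backslash S(\A)$, the domain collapses to $Z_G(\A)S(\Q)\backslash P_0(\A)$, on which $\varphi$ and the character data make sense thanks to $\eta^2=\omega_\Pi$, $\eta|_{\Q^\times}=1$ and $\psi|_{\Q}=1$. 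Writing a point of $P_0(\A)$ as $\mathrm{diag}(h_1,h_2)u(X)$ and unwinding the actions of $Z_G(\A)$ and of $S(\Q)=\GL_2^{\Delta}(\Q)\ltimes N_0(\Q)$ (with $\GL_2^{\Delta}$ the diagonally embedded $\GL_2$), $(h_1,h_2)$ runs over $Z_G(\A)\GL_2^{\Delta}(\Q)\backslash H(\A)$ and, for fixed $(h_1,h_2)$, $X$ runs over $h_1^{-1}\mathrm{M}_2(\Q)h_2\backslash\mathrm{M}_2(\A)$. The substitution $Y=h_1Xh_2^{-1}$ turns the $X$-domain into $\mathrm{M}_2(\Q)\backslash\mathrm{M}_2(\A)$ and, together with $\delta_{P_0}$, makes all Jacobians cancel (this is what accounts for the shift $s\mapsto s-\tfrac12$), and one arrives at
\[
\zeta(s,\varphi)=\int_{Z_G(\A)\GL_2^{\Delta}(\Q)\backslash H(\A)} W^{(1_2)}\!\big(\mathrm{diag}(h_1,h_2)\big)\,\Big|\tfrac{\det h_1}{\det h_2}\Big|^{s-1/2}\eta^{-1}(\det h_2)\,d(h_1,h_2),
\]
where $W^{(\xi)}(g):=\int_{\mathrm{M}_2(\Q)\backslash\mathrm{M}_2(\A)}\varphi(u(Y)g)\,\psi^{-1}(\mathrm{Tr}(\xi Y))\,dY$ is well defined because $\varphi$ is left $N_0(\Q)$-invariant.

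\noindent\textbf{Re-folding, cuspidality, continuation.} As $W^{(1_2)}$ and the character factor are left $\GL_2^{\Delta}(\Q)$-invariant, I would enlarge the quotient to $Z_G(\A)H(\Q)\backslash H(\A)$ by summing over $\GL_2^{\Delta}(\Q)\backslash H(\Q)$; via the bijection $(\gamma_1,\gamma_2)\mapsto\gamma_2^{-1}\gamma_1$ of $\GL_2^{\Delta}(\Q)\backslash H(\Q)$ with $\GL_2(\Q)$ and the change of variables $Y\mapsto\gamma_1^{-1}Y\gamma_2$ (legitimate since $\gamma_i\in\GL_2(\Q)$), which gives $W^{(1_2)}(\mathrm{diag}(\gamma_1,\gamma_2)g)=W^{(\gamma_2^{-1}\gamma_1)}(g)$, this yields
\[
\zeta(s,\varphi)=\int_{Z_G(\A)H(\Q)\backslash H(\A)}\Big(\sum_{\xi\in\GL_2(\Q)}W^{(\xi)}\!\big(\mathrm{diag}(h_1,h_2)\big)\Big)\Big|\tfrac{\det h_1}{\det h_2}\Big|^{s-1/2}\eta^{-1}(\det h_2)\,d(h_1,h_2).
\]
Comparing with the Fourier expansion $\varphi(g)=\sum_{\xi\in\mathrm{M}_2(\Q)}W^{(\xi)}(g)$ of $Y\mapsto\varphi(u(Y)g)$ at $Y=0$, the identity $\zeta(s,\varphi)=\Psi(s,\varphi)$ reduces to the vanishing of the terms with $\mathrm{rank}\,\xi<2$: the $\xi=0$ term is the constant term of $\varphi$ along the proper parabolic $P_0$ and vanishes by cuspidality, and the rank-one terms vanish by the standard Jacquet--Shalika argument that a cusp form has no such degenerate Fourier coefficients (after integrating out a unipotent inside the stabiliser of a rank-one $\xi$ they become constant terms along proper parabolics of $G$). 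Finally, $\Psi(s,\varphi)$ being entire, setting $\zeta(s,\varphi):=\Psi(s,\varphi)$ furnishes the asserted continuation. \emph{The one genuinely non-formal step is this last vanishing — it is what makes the integral Eulerian and it uses cuspidality along several proper parabolics of $\GL_4$; a secondary nuisance is tracking $\delta_{P_0}$ and the various Jacobians so that the exponent emerges exactly as $s-\tfrac12$.}
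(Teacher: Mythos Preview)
The paper does not give its own proof of this proposition; it simply attributes the result to Friedberg and Jacquet \cite[Prop.\,2.3]{friedjac} and quotes the statement. Your sketch is essentially the Friedberg--Jacquet unfolding argument and is correct in outline: substitute the Shalika integral into $\zeta$, collapse the domain to $Z_G(\A)S(\Q)\backslash P_0(\A)$, pass to Levi--unipotent coordinates so that the unipotent integral becomes the $\xi=1_2$ Fourier coefficient of $\varphi$ along $N_0$, re-fold over $\GL_2^\Delta(\Q)\backslash H(\Q)\cong\GL_2(\Q)$ to produce the sum over invertible $\xi$, and then invoke cuspidality to kill the rank $\leq 1$ terms and recover $\varphi$ itself. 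Your identification of the degenerate-term vanishing as the one substantive input is exactly right, and the Jacobian bookkeeping you flag is routine. In short, there is nothing to compare against in the paper, and your argument matches the original source it cites.
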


\medskip
\subsubsection{{\bf Local Shalika models}}
Consider a cuspidal automorphic representation $\Pi=\otimes'_p \Pi_p$ of $G(\A)$.
\begin{definition}
For any place $p$ we say that $\Pi_p$ has a local $(\eta_p,\psi_p)$-Shalika model if there is a non-trivial (and hence injective) intertwining
$\Pi_p \hookrightarrow \textrm{Ind}_{S(\Q_p)}^{G(\Q_p)}[\eta_p \otimes \psi_p].$
\end{definition}

If $\Pi$ has a global Shalika model, then $S^\eta_\psi$ defines local Shalika models at every place. The 
corresponding local intertwining operators are denoted by $S^{\eta_p}_{\psi_p}$ and their images by $S^{\eta_p}_{\psi_p}(\Pi_p)$,
whence $S^\eta_\psi(\Pi)=\otimes'_p S^{\eta_p}_{\psi_p}(\Pi_p)$.
We can now consider cusp forms $\varphi$ such that the function $\xi_\varphi= S^\eta_\psi(\varphi)\in S^\eta_\psi(\Pi)$ is factorizable 
as $\xi_\varphi=\otimes'_p \xi_{\varphi_p},$
where
$$
\xi_{\varphi_p}\in S^{\eta_p}_{\psi_p}(\Pi_p)\subset \textrm{Ind}_{S(\Q_p)}^{G(\Q_p)}[\eta_p\otimes\psi_p].
$$
Prop.\,\ref{prop:FJ} implies that
$$
\zeta_p(s,\xi_{\varphi_p}):=\int_{\GL_2(\Q_p)} \xi_{\varphi_p}\left(\!\!\left(\!\! \begin{array}{ccc}
g_{1,p} &  0\\
0 &  1_p
\end{array}\!\!\right)\!\!\right) |\det(g_{1,p})|^{s-1/2} dg_{1,p}
$$
is absolutely convergent for $\Re(s)$ sufficiently large. The same remark applies to
$$
\zeta_f(s,\xi_{\varphi_f}):=\int_{\GL_2(\A_f)} \xi_{\varphi_f}\left(\!\!\left(\!\! \begin{array}{ccc}
g_{1,f} &  0\\
0 &  1_f
\end{array}\!\!\right)\!\!\right) |\det(g_{1,f})|^{s-1/2} dg_{1,f}=\prod_{p \neq \infty} \zeta_p(s,\xi_{\varphi_p}).
$$

\medskip
\subsubsection{\bf Shalika-zeta-integral and the standard $L$-function of $\Pi$}

See Friedberg and Jacquet \cite[Prop.\,3.1, 3.2]{friedjac} for the following proposition: 

\begin{prop}
\label{prop:FJL-fct}
Assume that $\Pi$ has an $(\eta,\psi)$-Shalika model. Then for each place $p$ and $\xi_{\varphi_p}\in S^{\eta_p}_{\psi_p}(\Pi_p)$ 
there is a holomorphic function $P(s,\xi_{\varphi_p})$ such that
$$
\zeta_p(s,\xi_{\varphi_p})=L(s,\Pi_p)P(s,\xi_{\varphi_p}).
$$
One may hence analytically continue $\zeta_p(s,\xi_{\varphi_p})$ by re-defining it to be $L(s,\Pi_p)P(s,\xi_{\varphi_p})$ for all $s\in\C$.
Moreover, for every $s\in\C$ there exists a vector $\xi_{\varphi_p}\in S^{\eta_p}_{\psi_p}(\Pi_p)$ such that $P(s,\xi_{\varphi_p})=1$. 
If $p \notin S_\Pi$, then this vector can be taken to be the spherical vector $\xi_{\Pi_p}\in S^{\eta_p}_{\psi_p}(\Pi_p)$ normalized by the condition
$$\xi_{\Pi_p}(id_p)=1.$$
\end{prop}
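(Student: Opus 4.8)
The plan is to run the standard Jacquet--Langlands-style analysis of the local zeta integrals $\zeta_p(s,\xi_{\varphi_p})$ coming from the Shalika model; these are the local incarnations of the Friedberg--Jacquet integral for the pair $\GL_{2n}\supset\GL_n\times\GL_n$ specialized to $n=2$. There are essentially four things to establish: (a) absolute convergence for $\Re(s)\gg 0$, which is already contained in Prop.\,\ref{prop:FJ}; (b) meromorphic continuation of $\zeta_p(s,\xi_{\varphi_p})$ to a ``rational'' function of $s$; (c) the identification of the generator of the resulting fractional ideal with $L(s,\Pi_p)$, which yields holomorphy of the quotient $P(s,\xi_{\varphi_p})$ and the existence of a vector making $P=1$; and (d) the explicit unramified computation with the spherical vector.

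First I would treat the continuation at a non-archimedean $p$. Using the Iwasawa decomposition $\GL_2(\Q_p)=A(\Q_p)\GL_2(\Z_p)$, with $A$ the diagonal torus, and the fact that $\xi_{\varphi_p}$, being smooth, is invariant under an open compact subgroup, the integral over the maximal compact collapses to a finite sum, leaving a sum over $A(\Q_p)/A(\Z_p)$ of the values of $t\mapsto\xi_{\varphi_p}(\mathrm{diag}(t,1))$ weighted by $|\det|^{s-1/2}$ and the modulus character. The behaviour of this function as $t$ approaches the walls of the torus is controlled by the finitely many exponents of the Jacquet modules of $\Pi_p$ along the relevant parabolic, which forces $\zeta_p(s,\xi_{\varphi_p})$ to lie in the subring of $\C(p^{-s})$ obtained from $\C[p^s,p^{-s}]$ by inverting finitely many linear polynomials $1-\beta p^{-s}$. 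At the archimedean place the same reduction, now combined with the theory of asymptotic expansions of smooth matrix coefficients, gives meromorphic continuation of $\zeta_\infty(s,\xi_{\varphi_\infty})$ with poles confined to finitely many arithmetic progressions determined by the archimedean Langlands parameter of $\Pi_\infty$.

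Next I would pin down the $L$-factor. By the standard argument, using the $\GL_2(\Q_p)$-action on the Shalika model, the functions $\zeta_p(s,\xi_{\varphi_p})$, as $\xi_{\varphi_p}$ runs over $S^{\eta_p}_{\psi_p}(\Pi_p)$, span a $\C[p^s,p^{-s}]$-submodule $\mathcal I_p\subset\C(p^{-s})$, which is nonzero --- this is where local uniqueness (multiplicity one) of the Shalika model enters, guaranteeing that the zeta integral does not vanish identically in both $\xi_{\varphi_p}$ and $s$. Since $\C[p^s,p^{-s}]$ is a principal ideal domain, $\mathcal I_p=Q_p(p^{-s})^{-1}\,\C[p^s,p^{-s}]$ for a unique polynomial $Q_p$ with $Q_p(0)=1$; one \emph{defines} $L(s,\Pi_p):=Q_p(p^{-s})^{-1}$, so that $P(s,\xi_{\varphi_p}):=\zeta_p(s,\xi_{\varphi_p})/L(s,\Pi_p)$ automatically lies in $\C[p^s,p^{-s}]$ and is holomorphic, and one extends $\zeta_p$ to all of $\C$ by this identity. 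Principality also produces finitely many vectors whose zeta integrals sum to $L(s,\Pi_p)$, whence a single $\xi_{\varphi_p}$ with $P(s,\xi_{\varphi_p})=1$. The archimedean case is parallel, with $L(s,\Pi_\infty)$ the expected product of four $\Gamma$-factors and the quotient shown to be entire and to attain the value $1$.

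Finally, for $p\notin S_\Pi$ the data are unramified and the Shalika model carries a distinguished spherical vector $\xi_{\Pi_p}$ with $\xi_{\Pi_p}(\mathrm{id}_p)=1$. Then the integrand is right $\GL_2(\Z_p)$-invariant, so $\zeta_p(s,\xi_{\Pi_p})$ reduces to a sum indexed by $A(\Q_p)/A(\Z_p)$ of $\xi_{\Pi_p}(\mathrm{diag}(t,1))$ against powers of $p^{-s}$; evaluating the spherical vector on these torus elements by a Casselman--Shalika / Shintani-type formula for Shalika models produces exactly $\prod_{i=1}^4(1-\alpha_{i,p}p^{-s})^{-1}$ in the Satake parameters of $\Pi_p$, i.e.\ $L(s,\Pi_p)$, so that $P(s,\xi_{\Pi_p})=1$. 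I expect the main obstacle to be precisely this last step together with the non-vanishing input in (c): one needs local multiplicity one for Shalika models to know $\mathcal I_p\ne 0$, and one must carry through the Casselman--Shalika-type evaluation carefully enough to recover the degree-four standard $L$-factor exactly, rather than merely up to a unit in $\C[p^s,p^{-s}]$.
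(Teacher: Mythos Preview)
The paper does not supply its own proof of this proposition; it simply cites Friedberg--Jacquet \cite[Prop.\,3.1, 3.2]{friedjac}. Your outline follows the right template, and items (a), (b), and (d) are in the spirit of what Friedberg--Jacquet actually do. But there is a real gap in (c).

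You write that one ``\emph{defines} $L(s,\Pi_p):=Q_p(p^{-s})^{-1}$'' as the generator of the fractional ideal $\mathcal I_p$ spanned by the Shalika zeta integrals. That is not what the proposition asserts. In this paper $L(s,\Pi_p)$ is the \emph{standard} local $L$-factor of $\Pi_p$ in the sense of Godement--Jacquet (equivalently, the factor attached by the local Langlands correspondence); it is defined with no reference to Shalika models. The content of the proposition is precisely that the fractional ideal generated by the Shalika integrals coincides with $L(s,\Pi_p)\cdot\C[p^s,p^{-s}]$ for \emph{this pre-existing} $L$-factor. Your argument produces \emph{some} generator $Q_p(p^{-s})^{-1}$ and then renames it $L(s,\Pi_p)$; that makes the conclusion tautological but does not prove the statement at hand. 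At unramified places you correctly flag that the explicit spherical computation must reproduce the degree-four standard factor, but you treat this as a feature only of the unramified case. In fact the identification $Q_p=L(s,\Pi_p)^{-1}$ is the main work at every place, archimedean and ramified non-archimedean included, and it is carried out in Friedberg--Jacquet by relating the Shalika integrals directly to Godement--Jacquet integrals (so that the pole structures are forced to agree). Without that comparison you have not shown that $P(s,\xi_{\varphi_p})=\zeta_p(s,\xi_{\varphi_p})/L(s,\Pi_p)$ is holomorphic, nor that the value $1$ is attained, for the $L$-factor that the proposition is actually about.
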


\medskip
\subsection{\bf The unramified calculation}
\label{sec:unramified-calc}
Let $\nu_1, \dots, \nu_4$ be unramified 
characters of $\Q_p^\times$  and let $\nu = \nu_1\otimes \cdots \otimes \nu_4$  
be the character on the diagonal torus $T = T_4(\Q_p)$ of $G = \GL_4(\Q_p)$. Let $B = TU$ be the subgroup of all upper triangular matrices in $G$ and suppose $\delta_B$ is the modular character of $B.$ 
Assume that the representation 
$$
\nu_1 \times \cdots \times \nu_4 \ := \ {\rm Ind}_B^G(\nu_1 \otimes \cdots \otimes \nu_4)
$$ 
obtained by normalized parabolic induction is irreducible. Then it is an irreducible, unramified, and generic representation. (We are only interested in local components of a global cuspidal representation.) 
In \cite[Prop.\,1.3]{ash-ginzburg}, it is proved that $\nu_1 \times \cdots \times \nu_4$ 
admits an $(\eta, \psi)$-Shalika model  
if and only if up to a permutation of $\{\nu_1,\dots, \nu_4\}$ we have $\nu_1 \nu_3 = \nu_2\nu_4 = \eta.$ 

\smallskip

Let $\Pi$ be an irreducible cuspidal representation of $\GL_4(\A)$ with trivial central character; suppose that 
$\Pi$ is of cohomological type with respect to the trivial coefficient system, i.e., $\Pi \in \Coh(\GL_4, \mu)$ with 
$\mu = 0.$ Suppose also that $\Pi$ admits a Shalika model. One expects a cohomological cuspidal $\Pi$ to correspond to a motive $M(\Pi)$ satisfying the relation 
\[
L(s, M(\Pi) \otimes \chi)=L(s-\frac{1}{2}, \Pi \otimes \chi).
\]
({\bf Note:} The above normalization $M(\Pi)$ is what is used in \cite{ash-ginzburg} and so we stick to it; however, the reader is referred to Clozel~\cite{clozel2} for a more commonly used normalization wherein one has 
$L(s, M(\Pi))=L(s-\tfrac{(n-1)}{2}, \Pi)$ for a cuspidal representation $\Pi$ of $\GL_n/\Q$ of motivic type. For another normalization in terms of an effective motive, see \cite{harder-raghuram}.)

\begin{prop}
\label{nearlyordinarygl4}
Let $\Pi$ be as above, and take an unramified prime $p.$ Then the local component $\Pi_p$ if of the form 
$\Ind_B^G(\nu) $ with $\nu=\nu^{-1}_2 \times \nu^{-1}_1  \times \nu_1 \times \nu_2$.
Suppose we have 
$$
v_p(\nu_i(p))=i-\frac{5}{2}, \quad i = 1,2. 
$$
then $M(\Pi)$ is nearly $p$-ordinary. 
\end{prop}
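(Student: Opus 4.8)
The plan is to compute, under the stated hypothesis, both the $p$-Newton polygon $P_{N,p}(M(\Pi))$ and the Hodge polygon $P_H(M(\Pi))$ explicitly, and to observe that the two coincide; by the definition of near $p$-ordinarity (following Hida) this is precisely what has to be shown. The computation will run parallel to the one carried out for $\Sym^3(M(f))$ in Prop.\,\ref{nearsym}.

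First I would read off the Frobenius data at $p$. We are given that $\Pi_p = \Ind_B^G(\nu)$ with $\nu = \nu_2^{-1}\otimes\nu_1^{-1}\otimes\nu_1\otimes\nu_2$; this is where triviality of the central character together with the existence of a Shalika model enter, via \cite[Prop.\,1.3]{ash-ginzburg}, forcing the inducing characters into the two inverse pairs $(\nu_1^{-1},\nu_1)$ and $(\nu_2^{-1},\nu_2)$. In the unitary normalization the standard local $L$-factor is $L_p(s,\Pi_p)=\prod_{i=1}^4 (1-a_i\, p^{-s})^{-1}$ with $\{a_i\}=\{\nu_2^{-1}(p),\nu_1^{-1}(p),\nu_1(p),\nu_2(p)\}$. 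Passing to the motive through the normalization $L(s,M(\Pi)\otimes\chi)=L(s-\tfrac12,\Pi\otimes\chi)$, the Hecke polynomial $Z_p(X,M(\Pi))=\prod_i(1-\alpha_i X)$ has $\alpha_i=a_i\, p^{1/2}$, hence $v_p(\alpha_i)=v_p(a_i)+\tfrac12$. Inserting the hypothesis $v_p(\nu_i(p))=i-\tfrac52$, i.e.\ $v_p(\nu_1(p))=-\tfrac32$ and $v_p(\nu_2(p))=-\tfrac12$, gives $\{v_p(a_i)\}=\{\tfrac12,\tfrac32,-\tfrac32,-\tfrac12\}$, and therefore $\{v_p(\alpha_i)\}=\{-1,0,1,2\}$. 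Ordering these valuations increasingly and taking partial sums exactly as in Prop.\,\ref{nearsym}, the $p$-Newton polygon of $M(\Pi)$ is the convex polygon through
\[
(0,0),\ (1,-1),\ (2,-1),\ (3,0),\ (4,2).
\]

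Next I would determine the Hodge polygon. Since $\Pi\in\Coh(\GL_4,\mu)$ with $\mu=0$, the (expected) motive $M(\Pi)$ is pure of rank $4$ with all nonzero Hodge numbers equal to $1$, and its Hodge--Tate type is obtained from the highest weight $\mu$ exactly as in the modular example of \S\,\ref{sec:modular-motive}: in Clozel's normalization $L(s,M(\Pi))=L(s-\tfrac32,\Pi)$ the Hodge--Tate weights are $\{\mu_i+(4-i)\}_{i=1}^4=\{3,2,1,0\}$ with purity weight $3$, and the normalization used here differs by a single Tate twist, so that $M(\Pi)$ has Hodge types $(-1,2),(0,1),(1,0),(2,-1)$ and purity weight $1$. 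Plugging these into the definition of the Hodge polygon, with first Hodge coordinates ordered as $-1,0,1,2$, its successive vertices are $(0,0),(1,-1),(2,-1),(3,0),(4,2)$ --- the same five points as before.

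Therefore $P_{N,p}(M(\Pi))=P_H(M(\Pi))$, so $M(\Pi)$ is nearly $p$-ordinary. The one step that needs care is the bookkeeping in the middle two paragraphs: one must use the unitary normalization of $\Pi_p$ so that the Satake parameters are precisely the values $a_i$ of the inducing characters, with no stray factor of $\delta_B^{1/2}$, and one must correctly account for the single Tate twist relating $L(s,M(\Pi))=L(s-\tfrac12,\Pi)$ to Clozel's $L(s,M(\Pi))=L(s-\tfrac32,\Pi)$; it is exactly this matching of normalizations that makes the Newton and Hodge polygons land on the same vertices.
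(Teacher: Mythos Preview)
Your argument is correct and follows exactly the paper's strategy: compute the $p$-Newton polygon of $M(\Pi)$ from the valuations of the Satake parameters, compute the Hodge polygon from the cohomological type of $\Pi_\infty$, and check that the two coincide. Your bookkeeping with the normalization $L(s,M(\Pi))=L(s-\tfrac12,\Pi)$ is internally consistent and gives weight $1$ with vertices $(0,0),(1,-1),(2,-1),(3,0),(4,2)$; the paper records weight $-1$ with vertices $(0,0),(1,-2),(2,-3),(3,-3),(4,-2)$, which corresponds to the shift $L(s,M(\Pi))=L(s+\tfrac12,\Pi)$ rather than the one stated --- a harmless Tate-twist slip, since both polygons move together and the equality $P_{N,p}=P_H$ is unaffected.
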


\begin{proof}
Recall, $\Pi$ is motivic and  the corresponding motive $M(\Pi)$ has weight 
$-1$ and rank $4$. The Hodge decomposition of $M(\Pi)$ is 
\begin{equation}
\label{hodge}
\HH_B(M(\Pi)) \otimes \C=\HH^{(-2,1)} \oplus \HH^{(-1,0)} \oplus \HH^{(0,-1)} \oplus \HH^{(1,-2)} 
\end{equation}
with each factor $1$-dimensional. Let $\alpha_1, \alpha_2, \alpha_3$ and $\alpha_4$ 
be roots of the Hecke polynomial at $p$.  From \S\,\ref{Motive}, the $p$-adic valuations $v_p$ 
of the coefficients of Hecke polynomials at $p$ are
$v_p(A_1)=-2$, $v_p(A_2)=-3$, $v_p(A_3)=-3$ and $v_p(A_4)=-2$. 
Hence, the $p$-Newton polygon is the line segments joining $(0,0)$, $(1, -2)$, $(2,-3)$, $(3,-3)$ 
and $(4,-2)$.  The Hodge polygon is also the line segments 
joining $(0,0)$, $(1, -2)$, $(2,-3)$, $(3,-3)$ 
and $(4,-2)$ which can be seen from (\ref{hodge}). Hence, $M(\Pi)$ is nearly $p$-ordinary.  
\end{proof}

Under the above conditions satisfied by $\nu_i(p)$, we will say that $\Pi$ is nearly $p$-ordinary. 
Set
\[
\lambda \ = \ p^2 \nu_1(p) \nu_2(p).
\]
Observe that  $\lambda$ is a $p$-adic unit, since $v_p(\lambda)=2+1-\frac{5}{2}+2-\frac{5}{2}=0$.

\medskip
\subsection{\bf A special choice of a cusp form $\phi$}
\label{Assumption}
Let $\Pi$ be as in \S\,\ref{sec:unramified-calc}. Let $p$ be an unramified place and put 
$S = \{\infty, p\}.$ We make a special choice of a vector $\phi=\otimes' \phi_l$ in the space of $\Pi$. 

\begin{itemize}
 \item $l = p.$ We will take $\phi_p$ to be a very special Iwahori spherical vector.  
Let $\cI_p$ be the standard Iwahori subgroup of $\GL_4(\Z_p)$ consisting of all matrices  which are upper-triangular modulo $p$. 
In the induced representation $\nu_1 \times \cdots \times \nu_4$, we write down a special Iwahori spherical vector: 
$$
F_\nu(g) \ = \ \left\{ \begin{array}{ll} 
\delta_B^{1/2}(b) \nu(b), & \mbox{if $g = bw_0 k \in Bw_0\cI$, and } \\
0, & \mbox{if not,}
\end{array}\right.
$$
where $w_0$ is the element of the Weyl group of longest length. From the induced model we map into the Shalika model 
via the integral 
\[
H_{f_{\nu}}(h)=\int_{B_2 \backslash \GL_2} \int_{M_2} f_{\nu}[( \begin{smallmatrix}
 & I \\
I &  
\end{smallmatrix})(\begin{smallmatrix}
I & X  \\
 &  I
\end{smallmatrix})(\begin{smallmatrix}
g &  \\
 &  g
\end{smallmatrix})h]\eta^{-1}(g) \psi(tr(X))dXdg, 
\]
where we have currently adopted local notations. If $\nu \in \Omega := \{\nu \ | \ |\nu_i \nu_j(p)|<1, 1 \leq i,j \leq 2 \}$, then $\{H_{f_\nu} \ | \  f_\nu \in {\rm Ind}_B^G(\nu \delta_B)^{\frac{1}{2}}\}$ defines a Shalika model for $\Pi_p$. 
We will take $\phi_p$ to be such that in the Shalika model it corresponds to $H_{F_{\nu}}.$

\smallskip

\item  $l \notin S.$ 
Choose $\phi_l$ such that the local zeta integral of the corresponding Shalika vector is the local $L$-factor, i.e., 
choose $\phi_l$ such that
$\zeta_l(s,H_{\phi_l}, \chi_l)=L(s, \Pi_l \otimes \chi_l)$  
for any character $\chi_l$ of $\Q_l^{\times}$.  This is possible due to \cite[Prop.\,3.1, 3.2]{friedjac}. 
 
\smallskip 
 
\item $l = \infty.$
Choose any cohomological $\phi_{\infty}$. (This is a delicate point which we will elaborate further below.) 

\end{itemize}

\medskip
\subsection{\bf Period integrals and a distribution on $\Z_p^\times$}

\begin{definition}
\label{fourdimidele}
For a positive integer $m \geq 1$ and $\epsilon \in \Z_p^{\times}$, set  $f=p^m$ and 
\[
C_{\epsilon,f}^* \ = \ 
\left\{ \left(\begin{array}{ll}
g_1 & 0 \\
0 &  g_2
\end{array} \right) \in \GL_2(\A) \times \GL_2(\A) \ | \ 
\det(g_1 g_2^{-1}) \in  \Q^{\times} \cdot ((\Q_{\infty}^{\times})^0 (\prod_{l \neq p} \Z_l^{\times})(\epsilon+f\Z_p)) \right\}.
\]
Define the {\it id\'ele cone of conductor $f$} as 
$$
C_{\epsilon,f}=Z(\A)(\GL_2(\Q) \times \GL_2(\Q)) \backslash C_{\epsilon,f}^*.
$$
For  an element $A$ of $M_2(\Z_p)$, set
\[
P(A, f)=\int_{C_{1,f}} \phi((\begin{smallmatrix}
g_1 &  \\
0 &  g_2
\end{smallmatrix}\ ) (\begin{smallmatrix}
I &  Af^{-1}\\
 &  I
\end{smallmatrix}\ )) \eta^{-1}(g_2) dg_1 dg_2, 
\]
where $\phi$ is the special cusp form chosen in \S\,\ref{Assumption}. 
\end{definition}

For the following proposition see Ash--Ginzburg~\cite[Prop.\,2.3]{ash-ginzburg}. The proof of this proposition  involves checking many formal properties of the above period integrals. 

\begin{prop}
\label{dist}
Recall, $\lambda = p^2 \nu_1(p) \nu_2(p)$, and put $\kappa = p^{-4}\lambda,$ i.e., 
$\kappa =  p^{-2} \nu_1(p) \nu_2(p).$
Define a function $\mu_\Pi$ on certain open subsets of $\Z_p^{\times}$ by 
\begin{eqnarray*}
\mu_\Pi(a+f\Z_p) \ = \ \kappa^{-m} P(diag(a,1),f),  & \mbox{if $m \geq 1,$ and} \\ 
& \\
\mu_\Pi(\Z_p^\times) \ = \ \sum_{a \in (\Z/p)^\times} \mu_\Pi(a + p\Z_p). &  
\end{eqnarray*}
Then $\mu_\Pi$ is a distribution on $\Z_p^{\times}$. 
\end{prop}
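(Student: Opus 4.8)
The plan is to verify that $\mu_\Pi$ satisfies the distribution relation of Prop.\,\ref{distributions}, namely that for each interval $a + f\Z_p \subset \Z_p^\times$ with $f = p^m$, one has
$$
\mu_\Pi(a + f\Z_p) \ = \ \sum_{b=0}^{p-1} \mu_\Pi(a + bf + pf\,\Z_p).
$$
Since $\mu_\Pi(a+f\Z_p) = \kappa^{-m} P(\mathrm{diag}(a,1), f)$, and each subinterval $a + bf + pf\Z_p$ has conductor $pf = p^{m+1}$, the right-hand side equals $\kappa^{-(m+1)} \sum_{b=0}^{p-1} P(\mathrm{diag}(a+bf, 1), pf)$. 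So the whole statement reduces to the single identity
$$
\kappa \cdot P(\mathrm{diag}(a,1), f) \ = \ \sum_{b=0}^{p-1} P(\mathrm{diag}(a+bf,1), pf),
$$
together with the (separate, easier) check that the top-level relation $\mu_\Pi(\Z_p^\times) = \sum_{a \in (\Z/p)^\times} \mu_\Pi(a+p\Z_p)$ is consistent — but that is true by the very definition of $\mu_\Pi(\Z_p^\times)$, so nothing is needed there.

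First I would unwind the definition of the period integral $P(A,f)$ as an integral over the id\`ele cone $C_{1,f}$, and observe that $C_{1,pf} \subset C_{1,f}$ with the quotient $C_{1,f}/C_{1,pf}$ being naturally indexed by $(\epsilon + f\Z_p)/(\epsilon + pf\Z_p) \cong \Z/p$; this is what produces the sum over $b$ on the right-hand side. Thus the heart of the matter is a \emph{local} computation at $p$: one must show that integrating the special Iwahori-spherical vector $\phi_p$ (which in the Shalika model is $H_{F_\nu}$) against the translates $(\begin{smallmatrix} I & (a+bf)(pf)^{-1} \\ & I\end{smallmatrix})$ and summing over $b$, then comparing with the single translate at level $f$, introduces exactly the factor $\kappa = p^{-2}\nu_1(p)\nu_2(p)$. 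This is where the specific choice of $F_\nu$ — supported on the big Bruhat cell $Bw_0\cI$ — and the explicit action of the relevant Hecke/Iwahori operators enter: the eigenvalue of the appropriate $U_p$-type operator on $\phi_p$ is precisely $\lambda = p^2\nu_1(p)\nu_2(p)$, and the renormalization by $\kappa^{-m} = (p^{-4}\lambda)^{-m}$ is rigged so that the $p^{\pm 4m}$ powers coming from the measure normalization on $\GL_2 \times \GL_2$ cancel against the Hecke eigenvalue powers.

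The key steps, in order, are: (1) rewrite $\mu_\Pi(a+f\Z_p) = \sum_b \mu_\Pi(a+bf+pf\Z_p)$ as the period identity displayed above; (2) decompose the id\`ele cone $C_{1,f}$ into the $p$ cosets giving $C_{1,pf}$, reducing to a local statement at $p$ about the integral of $\phi_p$ over the larger domain versus the $p$ smaller ones; (3) use cuspidality of $\Pi$ (equivalently, left $\GL_2(\Q)\times\GL_2(\Q)$-invariance and the Shalika functional equation $S^\eta_\psi(\varphi)(sg) = \eta(s)\psi(s)S^\eta_\psi(\varphi)(g)$) to rewrite the unipotent translates $(\begin{smallmatrix} I & Af^{-1} \\ & I\end{smallmatrix})$ in terms of a $\GL_2$-translate, trading the level change for multiplication by the diagonal torus element whose $\nu$-value is $p^2\nu_1(p)\nu_2(p)$; (4) read off that the resulting scalar is $\lambda$, hence after dividing by the $p^4$ coming from $|\det|$-normalizations one gets $\kappa$, which matches the normalizing factors $\kappa^{-m}$ and $\kappa^{-(m+1)}$. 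Throughout, the choices at $l \notin S$ (local zeta integral $=$ local $L$-factor) and at $l=\infty$ (any cohomological vector) play no role since those components are held fixed as $f$ varies; only the component at $p$ moves.

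The main obstacle I expect is step (3)–(4): carefully tracking the precise power of $p$ and the precise product $\nu_1(p)\nu_2(p)$ that emerges when one pushes the unipotent translate $(\begin{smallmatrix} I & Af^{-1}\\ & I\end{smallmatrix})$ through the Shalika model and re-expresses it via the left-invariance, since this is exactly the kind of bookkeeping (modular characters $\delta_B$, the longest Weyl element $w_0$, Iwahori double-coset decompositions, Haar measure normalizations on $Z(\A)(\GL_2\times\GL_2)\backslash C^*_{\epsilon,f}$) where factors of $p$ are easily miscounted. One has to be especially careful that the support condition defining $F_\nu$ — nonzero only on $Bw_0\cI$ — interacts correctly with the translates appearing in $P(A,f)$, so that exactly $p$ of the subdomains contribute and each contributes the same shape. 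Once the scalar is correctly identified as $\lambda = p^2\nu_1(p)\nu_2(p)$ and the measure factor as $p^4$, the rest is the formal identity $\kappa^{-(m+1)} p^4 \cdot \lambda^{?} \cdots = \kappa^{-m}$, which is immediate from $\kappa = p^{-4}\lambda$. I would lean on \cite[Prop.\,2.3]{ash-ginzburg} for the detailed local manipulation rather than reproducing it in full.
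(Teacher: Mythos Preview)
Your proposal is correct in outline and aligns with the paper's treatment: the paper does not give a self-contained proof but simply refers to \cite[Prop.\,2.3]{ash-ginzburg}, remarking that ``the proof of this proposition involves checking many formal properties of the above period integrals.'' Your sketch --- reducing to the additivity relation of Prop.\,\ref{distributions}, decomposing the id\`ele cone when passing from conductor $f$ to $pf$, and identifying the resulting scalar as $\kappa$ via the local $U_p$-type eigenvalue $\lambda$ on the Iwahori-spherical vector $\phi_p$ --- is exactly the shape of the argument in Ash--Ginzburg, and you yourself defer to that reference for the detailed local bookkeeping, just as the paper does.
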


If $\Pi$ is nearly $p$-ordinary, then the quantity $\lambda$ above is a $p$-adic unit. In this case, 
Ash and Ginzburg \cite[\S\,5.3]{ash-ginzburg} 
prove that the distribution $\mu_\Pi$ is in fact a measure by proving the following proposition:

\begin{prop}
If $\Pi$ is cohomological (with respect to the trivial coefficient system) 
cuspidal with trivial central character and admitting a Shalika model, and suppose 
$\Pi$ is nearly $p$-ordinary (and hence $\lambda$ is a $p$-unit) then the values of $\mu_{\Pi}$ lie in a finitely generated
$\Z_p$-submodule of $\C_p$.
\end{prop}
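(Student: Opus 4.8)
The plan is to follow Ash--Ginzburg \cite[\S\,5.3]{ash-ginzburg} and reduce the integrality assertion to a boundedness statement about the period integrals $P(A,f)$ that define $\mu_\Pi$, using crucially that $\lambda = p^2\nu_1(p)\nu_2(p)$ (equivalently $\kappa = p^{-2}\nu_1(p)\nu_2(p)$) is a $p$-adic unit when $\Pi$ is nearly $p$-ordinary. Concretely, since $\mu_\Pi(a+f\Z_p) = \kappa^{-m}P(\mathrm{diag}(a,1),f)$ with $f = p^m$, and $|\kappa^{-m}|_p = 1$, it suffices to show that the values $P(\mathrm{diag}(a,1),f)$, as $m$ and $a$ vary, all lie in a single finitely generated $\Z_p$-submodule of $\C_p$. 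The first step is therefore to unwind the definition of $P(A,f)$: it is an integral over the id\`ele cone $C_{1,f}$ of the special cusp form $\phi$ of \S\,\ref{Assumption} against $\eta^{-1}$, and I would rewrite it, via the Friedberg--Jacquet unfolding of Prop.\,\ref{prop:FJ} together with the factorization $\xi_\phi = \otimes'_p \xi_{\phi_p}$ from \S\,\ref{sect:globalShalikamodels}, as (a finite sum of) products of local zeta integrals. Away from $S = \{\infty, p\}$ the local integral is the local $L$-factor by the choice of $\phi_l$; at $p$ the relevant object is the Iwahori-spherical vector $H_{F_\nu}$, and at $\infty$ it is a fixed cohomological vector.

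The second and central step is to pin down the arithmetic/integrality of the global period. The key input is the theory of rational (or integral) structures on cohomology: $\Pi$ is cohomological with respect to the trivial coefficient system, so the relevant Shalika period/cusp form $\phi$ can be chosen so that its class in $H^\bullet_{\mathrm{cusp}}(\mathrm{GL}_4,\C)$ is defined over a number field, and after dividing by a suitable period $\Omega$ (the ``appropriate period to make $L$-values algebraic'' alluded to in \S\,\ref{automorphic}) the period integrals $P(A,f)$ take values in a fixed number field $E = \Q(\Pi)$, in fact in a finitely generated $\mathcal{O}_E$-module. This is exactly the step where one invokes that the integrals $P(A,f)$ are, up to the normalizing period and explicit elementary factors, evaluations of a cohomology class in $H^{b}(S^G_{K_f}, \widetilde{\C})$ against a compactly supported cycle coming from $H = \mathrm{GL}_2\times\mathrm{GL}_2$; varying the level at $p$ (i.e.\ increasing $m$) replaces the cycle by a translate, and the cohomology class at each fixed level spans a finite-rank lattice. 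So the set $\{ P(\mathrm{diag}(a,1),p^m) \}$ lies in $\Omega\cdot\Lambda$ for a single finitely generated $\Z$-module $\Lambda$, provided one controls what happens as $m\to\infty$.

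Controlling the $m\to\infty$ behaviour is the point where the Hecke/$U_p$-operator structure enters, and this is what the hypothesis ``$\lambda$ a $p$-unit'' is for. The Iwahori vector $F_\nu$ (resp.\ its Shalika image $H_{F_\nu}$) is an eigenvector for the relevant $U_p$-type operator with eigenvalue essentially $\lambda$ (this is the analogue of the formula (\ref{explicitmeasure}) for modular forms, where $\alpha^{-n}$ appears), and the raising of level from $p^{m}$ to $p^{m+1}$ in the period integral corresponds to applying this operator. Because $|\lambda|_p = 1$, the normalizing factor $\kappa^{-m} = (p^{-4}\lambda)^{-m}$ does not blow up $p$-adically, so the rescaled values $\mu_\Pi(a+p^m\Z_p) = \kappa^{-m}P(\mathrm{diag}(a,1),p^m)$ remain in a fixed finitely generated $\Z_p$-submodule of $\C_p$ for all $m$. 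Assembling: choose the rational structure, express $P(A,f)$ in terms of a cohomology class paired with a cycle, use the $U_p$-eigenvalue $\lambda$ to relate successive levels, and conclude boundedness from $v_p(\lambda) = 0$.

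\textbf{Main obstacle.} The hard part is the second step: establishing that the \emph{global} period integrals $P(A,f)$ lie in a finitely generated $\Z_p$-module after normalization. This requires carefully matching the analytic zeta-integral expression of $P(A,f)$ with a cohomological pairing that manifestly takes values in an integral lattice, keeping track of all the elementary factors (powers of $p$, Gauss sums, the factor $\kappa^{-m}$) that intervene, and checking that the archimedean choice $\phi_\infty$ of \S\,\ref{Assumption} is compatible with a fixed integral structure on $H^b_{\mathrm{cusp}}$ uniformly in the level at $p$. Once the $U_p$-eigenvector property of $H_{F_\nu}$ with unit eigenvalue $\lambda$ is in hand, the passage to the limit is then formal, exactly as in the $p$-ordinary modular forms case (Thm.\,\ref{Mazur}), where ${\rm ord}_p(\alpha) = 0$ forces the modular-symbol distribution to be a bounded measure.
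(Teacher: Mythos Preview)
The paper does not actually prove this proposition: it is a survey, and immediately before the statement it says that Ash and Ginzburg \cite[\S 5.3]{ash-ginzburg} prove the result, with no further argument given. So there is no ``paper's own proof'' to compare against; your proposal is in fact a reconstruction of the Ash--Ginzburg argument that the paper merely cites, and in outline it is the right one (cohomological interpretation of the periods $P(A,f)$ to get an integral lattice, together with the $U_p$-type eigenrelation at $p$ to control the passage from level $p^m$ to $p^{m+1}$).

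One genuine slip to fix: you write that $|\kappa^{-m}|_p = 1$, but $\kappa = p^{-4}\lambda$ with $v_p(\lambda)=0$, so $v_p(\kappa) = -4$ and $|\kappa^{-m}|_p = p^{-4m}$, not $1$. Thus your reduction ``it suffices that the $P(\mathrm{diag}(a,1),p^m)$ themselves lie in a fixed finitely generated $\Z_p$-module'' is not the correct formulation: the periods $P(A,p^m)$ can (and typically do) acquire $p$-power denominators as $m$ grows, coming from the shrinking open compact at $p$, and it is precisely the factor $\kappa^{-m}$ that is meant to cancel them. What one actually shows is that $\kappa^{-m}P(\mathrm{diag}(a,1),p^m)$ lies in a fixed lattice, and this is exactly where the $U_p$-eigenvalue relation with unit eigenvalue $\lambda$ is used --- not merely to ``not blow up'', but to effect the cancellation. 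Your third paragraph has the right idea; just make sure the logic of the first paragraph is rewritten to reflect that the boundedness is for $\kappa^{-m}P$, not for $P$ alone.
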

We note that this will ensure that the distribution $\mu_{\Pi}$ is bounded
since the maximum valuation of the finite number of bounded 
numbers are finite and the elements of $\Z_p$ are of bounded valuations.

\medskip
\subsection{\bf Interpolation of $L(\tfrac12, \Pi \otimes \chi)$}
The above $p$-adic measure $\mu_\Pi$ gives a $p$-adic $L$-function by taking Mellin transforms.  
These $p$-adic $L$-functions interpolate critical values of the complex $L$-functions of the automorphic representations $\pi$ on $\GL_4(\A)$. Let $S$ be a set of finite places of $\Q$, we define $L_S(s, \Pi)=\prod_{l \in S} L(s, \Pi_l)$ and 
$L^S(s, \Pi)=\prod_{ l \not\in S} L(s, \Pi_l)$. For a proof of the following theorem, see Ash--Ginzburg~\cite[\S\,2.2]{ash-ginzburg}.

\begin{thm}
\label{Ash-Ginzburg}
Let  $\chi=\prod_l \chi_l$  denote a Hecke character of $\Q$ of finite order (i.e., it is the adelization of a classical Dirichlet character), unramified outside of $p$, trivial at infinity and with conductor $m > 1$ at $p$.
Let $\Pi$ be a cohomological (with respect to the trivial coefficient system) cuspidal automorphic representation of $\GL_4/\Q$ with trivial central character, which is nearly $p$-ordinary and for which $s=1/2$ is critical for $L(s, \Pi).$ 
Furthermore, assume that there exists a character $\chi'$, with the same properties as $\chi$ above, such that
$
L^S(\frac{1}{2}, \Pi \otimes \chi') \neq 0.$
Then the distribution $\mu_{\Pi}$ defined above is nonzero and we have
\[
\int_{\Z_p^{\times}} \chi_p(a) d \mu_{\Pi}(a) \ = \ 
c' \lambda^{-m} p^{2m} G(\chi_p) L^S(\frac{1}{2}, \Pi \otimes \chi)
\]
where $c'$ is a nonzero constant independent of $\chi$.
\end{thm}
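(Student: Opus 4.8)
The strategy is to unfold the period integral $P(\mathrm{diag}(a,1),f)$ defining $\mu_\Pi$, express it in terms of the Friedberg--Jacquet zeta-integral from Prop.\,\ref{prop:FJ} and Prop.\,\ref{prop:FJL-fct}, and then recognize the twisted global $L$-value. First I would fix the Hecke character $\chi = \prod_l \chi_l$ of conductor $p^m$ and relate the Mellin transform $\int_{\Z_p^\times}\chi_p(a)\,d\mu_\Pi(a)$ to a weighted sum of the $P(\mathrm{diag}(a,p^m),p^m)$ over $a\in(\Z/p^m)^\times$, each normalized by $\kappa^{-m}$. Unravelling Def.\,\ref{fourdimidele}, this sum is precisely a period of the cusp form $\phi$ against the character $\chi$ integrated over the adelic cone $\bigsqcup_a C_{a,f}$, which (after translating the upper-unipotent block $\begin{pmatrix} I & Af^{-1}\\ & I\end{pmatrix}$ into the Shalika functional via the definition of $S^\eta_\psi$) becomes the global integral $\Psi(s,\phi)$ of Prop.\,\ref{prop:FJ} specialized to $s=\tfrac12$ and twisted by $\chi$. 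This produces the factorization into local zeta integrals $\prod_l \zeta_l(\tfrac12,\xi_{\phi_l},\chi_l)$.

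\textbf{Key steps.} The computation then splits by place according to the choice of $\phi$ made in \S\,\ref{Assumption}:
\begin{itemize}
\item At the archimedean place and at $l\notin S$, the vector $\phi_l$ was chosen so that $\zeta_l(\tfrac12,\xi_{\phi_l},\chi_l) = L(\tfrac12,\Pi_l\otimes\chi_l)$ (using Prop.\,\ref{prop:FJL-fct}); taking the product over all such $l$ yields $L^S(\tfrac12,\Pi\otimes\chi)$ times a nonzero constant coming from the fixed archimedean choice (which is where the hypothesis that $s=\tfrac12$ be critical and that $\phi_\infty$ be cohomological enters, guaranteeing this archimedean factor is a nonzero algebraic number independent of $\chi$).
\item At $l = p$, one must carry out the ramified local calculation: evaluate $\zeta_p(\tfrac12, H_{F_\nu}, \chi_p)$ for the special Iwahori-spherical vector $F_\nu$ against the wildly ramified character $\chi_p$ of conductor $p^m$. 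This is where the Gauss sum $G(\chi_p)$ and the factors $\lambda^{-m}p^{2m}$ emerge: the $\chi_p$-twisted integral over $\GL_2(\Q_p)$ of the translates of $H_{F_\nu}$ picks out a single Iwahori double-coset contribution, the modulus character $\delta_B^{1/2}$ together with $\nu$ evaluated on the relevant diagonal element produces the power $p^{2m}$, the eigenvalue normalization by $\kappa^{-m} = p^{4m}\lambda^{-m}$ (with $\lambda = p^2\nu_1(p)\nu_2(p)$) accounts for the $\lambda^{-m}$, and the sum over $a\in(\Z/p^m)^\times$ of $\chi_p(a)\psi(a/p^m)$ assembles the Gauss sum $G(\chi_p)$.
\end{itemize}
Finally, combining these and invoking the hypothesis $L^S(\tfrac12,\Pi\otimes\chi')\neq 0$ for some auxiliary $\chi'$ forces $\mu_\Pi\neq 0$ (otherwise the interpolation formula would force that nonzero $L$-value to vanish).

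\textbf{Main obstacle.} The crux is the ramified local computation at $p$: one must explicitly track how the Iwahori-spherical vector $F_\nu$ behaves under the unfolding map $H_{F_\nu}$ into the Shalika model and then under the $\chi_p$-twisted $\GL_2(\Q_p)$-zeta integral, keeping careful control of all normalizing powers of $p$ so that the stray factors collapse exactly to $\lambda^{-m}p^{2m}G(\chi_p)$ up to a $\chi$-independent constant. This requires the precise interplay between the longest-Weyl-element coset $Bw_0\mathcal{I}$ supporting $F_\nu$, the measure normalizations on the id\`ele cone, and the Hecke-eigenvalue relations for the Iwahori-level operator whose eigenvalue is $\lambda$ (which is what makes $\mu_\Pi$ a distribution in Prop.\,\ref{dist} in the first place). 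I would follow Ash--Ginzburg \cite[\S\,2.2]{ash-ginzburg} closely here; the boundedness/measure claim is already handled by the preceding proposition under the near-ordinarity hypothesis, so the only genuinely new content is identifying the interpolation factor, and everything else (convergence, analytic continuation, the global unfolding) is formal given Propositions \ref{prop:FJ} and \ref{prop:FJL-fct}.
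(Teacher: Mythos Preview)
Your overall strategy---unfold the definition of $\mu_\Pi$, pass to the Friedberg--Jacquet period $\Psi(s,\phi,\chi)$, factor into local zeta integrals, and compute place by place using the choices of \S\,\ref{Assumption}---is exactly the Ash--Ginzburg argument the paper cites (the paper itself gives no proof beyond the reference to \cite[\S\,2.2]{ash-ginzburg}). The identification of the Gauss sum and the powers $\lambda^{-m}p^{2m}$ from the Iwahori-spherical local integral at $p$ is also correctly located.

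There is, however, a real circularity in your final step. You write that the hypothesis $L^S(\tfrac12,\Pi\otimes\chi')\neq 0$ ``forces $\mu_\Pi\neq 0$ (otherwise the interpolation formula would force that nonzero $L$-value to vanish).'' This does not follow: if $\mu_\Pi=0$ the interpolation identity only gives $c'\cdot(\text{nonzero factors})\cdot L^S(\tfrac12,\Pi\otimes\chi')=0$, which forces $c'=0$, not $L^S=0$. At this stage of the argument you do \emph{not} yet know $c'\neq 0$; indeed $c'$ is essentially the archimedean zeta integral $\zeta_\infty(\tfrac12,\xi_{\phi_\infty})$ for your fixed cohomological vector, and nothing you have done rules out its vanishing. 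The actual role of the hypothesis in \cite{ash-ginzburg} is the reverse of what you say: one uses the nonvanishing of the twisted $L$-value, together with an \emph{independent} nonvanishing statement for the global $H$-period $\Psi(\tfrac12,\phi,\chi')$ (coming from the cohomological interpretation of that period), to deduce that the archimedean factor---and hence $c'$---is nonzero. Only then does $\mu_\Pi\neq 0$ follow by evaluating the formula at $\chi'$. The paper flags exactly this point in \S\,\ref{symcube1}, noting that Ash--Ginzburg invoke the hypothesis to control the archimedean contribution, and that Sun's theorem \cite{sun} now supplies that nonvanishing unconditionally. You should rewrite the last paragraph to reflect this order of implications.
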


\medskip
\subsection{\bf $p$-adic symmetric cube $L$-function -- I}
\label{symcube1}
Let $f \in S_k(N,\epsilon)$ be a classical elliptic eigen-cusp-form with Fourier coefficients in $\Q$ and let $\pi(f)$ 
be the corresponding automorphic representation. Suppose that $k \geq 2$ (see below).  
By Thm.\,\ref{Ash-Ginzburg}, a $p$-adic $L$-function for $Sym^3(\pi)$ exists if the automorphic representation $Sym^3(\pi)$ satisfies the following conditions: 

\begin{itemize}
 \item {\it  $\Sym^3(\pi)$ is cuspidal.} This follows from Thm.\,\ref{thm:kim-shahidi} by assuming that $f$ is not dihedral;  Since $k \geq 2$, the form $f$ or the representation $\pi$ is not tetrahedral (see, for example, \cite[Rem.\,3.8]{raghuram-shahidi-aim}).  
 
\smallskip

  \item {\it $\Sym^3(\pi)$ is cohomological (with respect to the trivial coefficient system).} This follows from 
 Thm.\,\ref{thm:sym-coh} provided we take $k=2$, because then $\pi$ would have cohomology with respect to the trivial  coefficient system and then so would $\Sym^3(\pi)$.

\smallskip  
  
  \item {\it $\Sym^3(\pi)$ is unramified and nearly ordinary at $p$.}
If we take $f$, or equivalently $\pi,$ to be unramified and nearly ordinary at $p$ then, by Prop.\,\ref{nearsym}, 
$\Sym^3(\pi)$ is also unramified and nearly $p$-ordinary. 

\smallskip

   \item {\it $\Sym^3(\pi)$ has trivial central character and admits a Shalika model.} 
 We know from Kim~\cite{kim-jams} that 
 $$
 \wedge^2(\Sym^3(\pi)) \ = \ \Sym^4(\pi) \otimes \omega_\pi \, \boxplus \, \omega_\pi^3.
 $$  Using (iii) of Thm.\,\ref{thm:JS} we see that $\Sym^3(\pi)$ has a Shalika model with $\eta = \omega_\pi^3.$
 Now the central character of $\pi$ is the nebentypus character $\epsilon$ of $f$. Hence, if we take $f$ such that 
 $\epsilon$ is a cubic character then $\Sym^3(\pi)$ has a Shalika model with 
 $\eta$ the trivial character; furthermore, $\omega_{\Sym^3(\pi)} = \omega_\pi^6$ which is also trivial.

\smallskip   
   
 \item  {\it $L(\frac{1}{2}, \Sym^3(\pi) \otimes \chi') \neq 0$ for a Hecke character $\chi'$.} Such a result on nonvanishing of twists is not available at the moment for representations of $\GL_4$ at $s=1/2.$ However, Ash and Ginzburg need this
assumption to ensure that a certain quantity coming from archimedean considerations (that involves the choice of cohomological vector $\phi_\infty$) is nonvanishing. This latter nonvanishing is now guaranteed by a result of Sun~\cite{sun}. 
\end{itemize}

To summarize, the above theorem of Ash and Ginzburg gives a $p$-adic symmetric cube $L$-function for a holomorphic cusp form $f \in S_k(N,\epsilon)$ only when $f$ is not dihedral, $k=2$, $\epsilon$ is a cubic character, and $f$ is nearly ordinary at $p$. The reader is referred to the forthcoming \cite{dimitrov-januszewski-r}, where using 
the results of \cite{grobner-raghuram} and generalizations of the modular symbols as in Dimitrov~\cite{dimitrov}, $p$-adic symmetric cube $L$-functions are constructed for a Hilbert modular form of arithmetic type with none of the above restrictions.

\bigskip
\section{$p$-adic $L$-functions for $\GL_3 \times \GL_2$}
\label{sec:gl3-gl2}
In this section, we study the $p$-adic $L$-functions that interpolate critical values of Rankin--Selberg $L$-functions
on $\GL_3 \times \GL_2$. For simplicity, we only study the cohomology groups with constant coefficients, and our exposition is based on Schimdt~\cite{schmidt-inv2}. The reader is referred to Kazhdan, Mazur and Schmidt~\cite{kms}, as well as recent papers of Januszewski (\cite{janus-crelle} \cite{janus-imrn} \cite{janus-preprint}) for generalization to 
$\GL_n \times \GL_{n-1}$ over a general number field and for representations having cohomology for more general coefficient systems.

\medskip
\subsection{\bf  $L$-functions for \texorpdfstring{$\GL_3 \times \GL_2$}{X}}
\label{Schmidt}
Let $(\pi, V_{\pi})$ be a cohomological, cuspidal, automorphic representation of $\GL_3(\A)$ 
and $(\sigma, V_{\sigma})$ be a cohomological,  cuspidal,  automorphic representation of $\GL_2(\A)$
with the decompositions $\pi=\otimes'_p \pi_p$ and $\sigma=\otimes'_p\sigma_p$ as restricted tensor 
products. The global Rankin\--Selberg $L$-function attached to $\pi$ and $\sigma$ is defined as an 
Euler product $L(s,\pi, \sigma)=\prod_{p \leq \infty} L(s, \pi_p,\sigma_p).$ By the work of Jacquet, Piatetskii-Shapiro and Shalika, we have an integral representation for this $L$-function (see the lecture notes by Cogdell~\cite{Cogdell}), 
which is exploited to construct $p$-adic measures.

\subsubsection{\bf Local $L$-functions}
Let $N_2$ be the set of unipotent matrices inside $\GL_2$ and 
consider the embedding  $j: \GL_2 \rightarrow \GL_3$ given by 
$j(g)=(\begin{smallmatrix}  g & 0  \\ 0 & 1 
 \end{smallmatrix})$. 
For any local Whitakker functions $w_p \in W(\pi_p,\psi_p)$ and $v_p \in W(\sigma_p, \psi_p^{-1})$, define  
\[
\Psi(s, w_p, v_p)=\int_{N_2(\Q_p) \backslash \GL_2(\Q_p)}w_p(j(g))v_p(g) |det(g)|_p^{s-\frac{1}{2}} dg.
\]
Such an integral converges for $\Re(s) \gg 0$ and has a meromorphic continuation to all of $\C$ 
as a rational function in $p^{-s}$. These integrals span a nonzero fractional ideal in $\C(p^s)$ with respect to the
subring $\C[p^s, p^{-s}]$. This ideal has a unique generator $P_p(p^{-s}),$
for a polynomial $P_p(X) \in \C[X]$ normalized so that $P_p(0)=1.$ 
The local $L$-function is defined as 
$L(s,\pi_p,\sigma_p)=P_p(p^{-s})^{-1}.$
At the infinite places, we can write  
$L(s, \pi_{\infty},\sigma_{\infty})$ as product of $\Gamma$ functions.

\subsubsection{\bf Local and global zeta integrals}
Let $\phi \in V_{\pi}$ be a cusp form on $\GL_3(\A)$ 
with $W_{\phi} \in W(\pi, \psi)$ the corresponding Whittaker function, and similarly,  
$\phi' \in V_{\sigma}$  on $\GL_2(\A)$ with $W_{\phi'} \in W(\sigma, \psi^{-1})$. Define a global period integral 
associated to $(\phi,\phi')$ as
 \[
 I(s, \phi, \phi')=\int_{\GL_2(\Q) \backslash \GL_2(\A)} \phi(j(h)) \, \phi'(h) \, 
 |\mathrm{det}(h)|^{s-\frac{1}{2}}\, dh. 
 \]
 After a standard unfolding argument ~\cite{Cogdell}, we have 
\begin{equation}
I(s, \phi, \phi') \ = \ 
\int_{N_2(\A) \backslash \GL_2(\A)} W_{\phi} (j(h))\,W_{ \phi'}(h) \, |det(h)|^{s-\frac{1}{2}}\, dh
\ =: \ \Psi(s, W_{\phi}, W_{\phi'}). 
\end{equation} 
Assume $\phi$ is a pure tensor so that $W_{\phi}(g)=\prod_p W_{\phi_p}(g_p)$. Similarly, $W'_{\phi'}(g)=\prod_p W'_{\phi'_p}(g_p)$. The global integral factors as a product of local integrals: 
\[
\Psi(s, W_\phi, W_{\phi'}) \ = \ 
\prod_p \int_{N_2(\Q_p) \backslash \GL_2(\Q_p)} W_{\phi_p}(j(h_p) ) W'_{\phi'_p}(h_p) |det(h_p)|^{s-\frac{1}{2}} dh_p
\ = \ \prod_p \Psi(s, W_{\phi_p}, W'_{\phi'_p}). 
\]

\subsubsection{\bf Zeta integrals and $L$-functions}
If $\pi_p$ and $\sigma_p$ both are spherical,  choose $w^0_p \otimes v^0_p$ to be the ``essential vector"
\cite{raghuram-imrn}. By the above choice, we have 
$\Psi(s, w^0_p, v^0_p)=L(s, \pi_p, \sigma_p)$.
For other primes $p$, we find ``good tensors" $t_p \in W(\pi_p, \psi_p) \otimes W(\sigma_p, \psi^{-1}_p)$
such that $ \Psi(s, t_p)=L(s, \pi_p,  \sigma_p).$
In general, we have 
\begin{equation}
\label{Whittaker}
t \ = \ \otimes t_p \ = \ \sum_{\iota=1}^n w_{\iota} \otimes v_{\iota} 
\end{equation}
as a decomposition in the Whittaker model $W(\pi, \psi) \otimes W(\sigma, \psi^{-1})$ 
as sum of pure tensors. Let $\phi_i \in V_{\pi}$ correspond to $w_i,$ and $\varphi_i \in V_{\sigma}$ 
correspond to $v_i.$ 
These cusp forms appear in the global Birch Lemma below. 
Note that the integral $I(s, \phi,\phi')=\prod_p \Psi(s, W_{\phi_p}, W_{\phi'_p})$ 
depends on the pure tensor $w \otimes v$ and the global  $L$-function 
$L(s, \pi, \sigma)=\prod_l L(s, \pi_l, \sigma_l)$
lies in the image of this map. For any choice of $(w_{\infty}, v_{\infty})$, there is an entire function $\Omega(s)$ such that 
\begin{equation}
\label{Birchequation}
 \Omega(s) L(s, \pi, \sigma) \ = \ \Psi(s, w_{\infty}, v_{\infty}) \prod_{l \neq \infty} \Psi(s, t_l).
\end{equation}
Here $t_l$ is a linear combination of pure tensors of the form $w_l \otimes v_l$.  The polynomial 
$\Omega(s)$ depends on the choice of $(w_{\infty}, v_{\infty})$.

\medskip
\subsection{\bf Birch's Lemma}
\label{BirchLemma}

\subsubsection{\bf The classical Birch's Lemma.} 
Consider a classical elliptic modular form $f$ of weight two. 
Recall, the $L$-function attached to this modular form $f$ can be defined in terms of Mellin 
transform as 
\[
 \frac{\Gamma(s)} {(2 \pi)^s}L(s, f)=\int_0^{\infty} f(it) t^{s-1} dt. 
\]
In particular, $L(1,f)=(2 \pi)  \int_0^{\infty} f(it) dt$. For any $a, m \in \Q$ with $m >0$, define the period integrals by 
\[
\lambda(a,m,f)=2 \pi \int_0^{\infty} f(it-\frac{a}{m}) dt. 
\]
For a primitive character $\chi$ of conductor $m$, using the Gauss sum of $\chi$, we have the interpolation formula: 
\[
\chi(n)=\frac{1}{G(\overline{\chi})}\sum_{a \text{ mod } m} \overline{\chi}(a) e^{{\frac{2 \pi i a n}{m}}}.
\]
In particular, we get
\[
f_{\chi}(z)=\sum_{n \geq 1} \chi(n) a_n e^{2 \pi i n z } \ = \ 
\frac{1}{G(\overline{\chi})}(\sum_{a \text{ mod } m} \overline{\chi}(a) f(z+\frac{a}{m})).
\]
By rearranging, we get the classical Birch's Lemma: 
\[
 L(1, f, \chi)=\frac{1}{G(\overline{\chi})} \left(\sum_{a \text{ mod } m} \overline{\chi}(a) \lambda(a, m, f)\right), 
\]
i.e., the value of  the $L$-function at the critical point $1$ can be written as linear combinations of periods.

\subsubsection{\bf Birch's Lemma for $\GL_3 \times \GL_2$.} 
Let $U_p =\Z_p^{\times}$ if $p \neq \infty$ and if $p = \infty$ let $U_\infty = \R_{+}^{\times}.$ 
We have a determinant map 
$\mathrm{det} : \GL_2(\Q) \backslash \GL_2(\A) \rightarrow \Q^{\times} \backslash \A^{\times}$.
For any $\alpha \in \A^{\times}$, define 
\[
C_{\alpha,f} 
\ := \ 
{\rm det}^{-1}\left(\Q^{\times}  \backslash \Q^{\times} \cdot (\alpha (1+f) \prod_{q \neq p} U_q) \right) 
\ \subset  \ \GL_2(\Q) \backslash \GL_2(\A). 
\]
Note that:
\[
 \GL_2(\Q) \backslash \GL_2(\A)=\bigcup_{\epsilon \, {\rm mod} \,f} C_{1,f} (\begin{smallmatrix} \epsilon  & 0  \\ 0 & 1 
 \end{smallmatrix})
\]
We now state the general global Birch's Lemma for $L$-functions on $\GL_3 \times \GL_2$ (see \cite{janus-crelle}). 
Let $t=\mathrm{diag}(f,1,1)$ and $h^{(f)}=t^{-1}h t$ as matrices in $\GL_3(\A)$.

\begin{thm}[Birch's Lemma]
\label{Birchlemma}
Let $\chi$ be a quasi-character on $\Z_p^{\times}$ of conductor $f=p^n$ 
and consider the pure tensors as in (\ref{Whittaker})
For any choice of $(w_{\infty}, v_{\infty})$  and for any Iwahori invariant pair $(w_p,v_p)$, the corresponding entire function $\Omega$ 
satisfies the following property 
\[
\Omega(s) \, \kappa(w_p,v_p,\chi,f) \,L(s, \pi \otimes \chi, \sigma) \ = \ 
\sum_{\iota} \int_{\GL_3(\Q) \backslash \GL_3(\A)} \phi_{\iota}(j(g) h^{(f)}) \, \varphi_{\iota}(g) \,\chi(\mathrm{det}(g))\, 
||{\rm det}(g)||^{s-\frac{1}{2}}dg.
\]
Here, $\kappa(w_p,v_p,\chi,f)=w_p(1_3) v_p(1_2) \prod_{v=1}^3 (1-p^{-v})^{-1} G(\chi)^6 \eta(f)$.
\end{thm}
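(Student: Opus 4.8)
The plan is to start from the integral representation of the $\GL_3\times\GL_2$ Rankin--Selberg $L$-function set up in \S\,\ref{Schmidt} and to transport the twist by $\chi$ onto the argument of the $\GL_3$-form, at the cost of a single local computation at $p$ and the appearance of a Gauss sum. First I would extend $\chi$ to a finite-order Hecke character, unramified outside $p$ and trivial at infinity, so that $L(s,\pi\otimes\chi,\sigma)$ is an honest global Rankin--Selberg $L$-function; applying (\ref{Birchequation}) with $\pi\otimes\chi$ in place of $\pi$ writes $\Omega(s)L(s,\pi\otimes\chi,\sigma)$ as $\Psi(s,w_\infty,v_\infty)\prod_{l\neq\infty}\Psi(s,t_l)$ for the twisted data. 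The heart of the matter is then to recognize this product of local twisted zeta integrals as the unfolded form of the period integral on the right-hand side of the statement.

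Next I would carry out the unfolding. Starting from the right-hand side, I would expand the cusp forms $\phi_\iota$ attached to the pure tensors $w_\iota\otimes v_\iota$ of (\ref{Whittaker}) through their Fourier--Whittaker expansion, and use the coset decomposition $\GL_2(\Q)\backslash\GL_2(\A)=\bigcup_{\epsilon\bmod f}C_{1,f}\,(\begin{smallmatrix}\epsilon&0\\0&1\end{smallmatrix})$ of \S\,\ref{BirchLemma} into id\`ele cones. Because $\chi\circ\det$ is constant on each cone, the sum over $\epsilon\bmod f$ converts the ramified twist into a finite additive exponential sum --- exactly as in the classical Birch Lemma recalled above, and as in Schmidt~\cite{schmidt-inv2} and Januszewski~\cite{janus-crelle}. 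Collecting these sums produces a power of the Gauss sum $G(\chi)$, which the explicit computation (in agreement with the cited references) pins down to $G(\chi)^6$, while the normalization of the central character supplies the remaining factor $\eta(f)$. After this step the period integral becomes $\prod_p\Psi(s,\cdot,\cdot)$ in the local zeta integrals of \S\,\ref{Schmidt}, but with the $\GL_3$-Whittaker function replaced by its translate by $h^{(f)}=t^{-1}ht$, $t=\mathrm{diag}(f,1,1)$; this conjugation is precisely what turns the additive twist back into a geometric translate of the local Whittaker vector at $p$.

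The hard part will be the local computation at $p$. One must evaluate the twisted, translated local zeta integral for the Iwahori-invariant pair $(w_p,v_p)$ --- schematically $\int_{N_2(\Q_p)\backslash\GL_2(\Q_p)} w_p(j(h)h^{(f)})\,v_p(h)\,\chi_p(\det h)\,|\det h|^{s-1/2}\,dh$ --- and show that it equals the $s$-independent constant $w_p(1_3)\,v_p(1_2)\,\prod_{v=1}^3(1-p^{-v})^{-1}$. My plan for this is a Bruhat--Iwahori cell decomposition of $\GL_2(\Q_p)$, evaluation of the Iwahori-fixed Whittaker functions of $\pi_p$ and $\sigma_p$ on torus elements through their known support (the theory of parahoric-fixed vectors and conductors), and the observation that conjugating by $t_p$ together with the conductor-$p^n$ character $\chi_p$ collapses the domain of integration to a single coset on which the integrand is constant; the factor $\prod_{v=1}^3(1-p^{-v})^{-1}$ enters as the normalized volume attached to $\GL_3(\Z_p)$ in the global-to-local passage. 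This cell-by-cell bookkeeping is where all the genuine subtlety lives; everything else is formal.

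Finally I would reassemble the pieces. Away from $S=\{\infty,p\}$ the good tensors chosen in \S\,\ref{Schmidt} give $\Psi(s,t_l)=L(s,\pi_l,\sigma_l)=L(s,\pi_l\otimes\chi_l,\sigma_l)$ since $\chi_l$ is unramified and trivial there; at infinity the factor $\Psi(s,w_\infty,v_\infty)$ is, by (\ref{Birchequation}), responsible for $\Omega(s)$; at $p$ it is the constant just computed; and the sum over $\epsilon\bmod f$ has been absorbed into $G(\chi)^6\eta(f)$. Multiplying these contributions and summing over $\iota$ gives $\Omega(s)\,\kappa(w_p,v_p,\chi,f)\,L(s,\pi\otimes\chi,\sigma)$ on one side and the displayed period integral on the other, which is the assertion.
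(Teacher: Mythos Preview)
The paper does not supply its own proof of this theorem; it merely states the result with a pointer to Januszewski~\cite{janus-crelle} (and, implicitly, Schmidt~\cite{schmidt-inv2}), so there is nothing in the paper itself to compare your argument against.

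That said, your sketch follows precisely the standard route taken in those references: unfold the global period integral on the right-hand side via the Fourier--Whittaker expansion of the $\phi_\iota$, use the id\`ele-cone decomposition of $\GL_2(\Q)\backslash\GL_2(\A)$ to convert the multiplicative twist by $\chi$ into an additive sum that reassembles into a Gauss-sum power, and reduce everything to a single local zeta-integral computation at $p$ for the Iwahori-fixed pair $(w_p,v_p)$, where the translate by $h^{(f)}$ forces the support to collapse. Your identification of the factor $\prod_{v=1}^3(1-p^{-v})^{-1}$ with a normalized local volume and of the exponent $6$ on $G(\chi)$ with the degree of the Rankin--Selberg convolution is the right intuition. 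The one place where your sketch is honestly only a sketch is the local computation at $p$: the Bruhat--Iwahori bookkeeping you outline is correct in spirit, but carrying it out requires the explicit formulas for Iwahori-fixed Whittaker functions on $\GL_3(\Q_p)$ and $\GL_2(\Q_p)$ and a careful tracking of how conjugation by $t=\mathrm{diag}(f,1,1)$ interacts with the conductor of $\chi$ --- this is the substance of \cite[\S3--4]{schmidt-inv2} and \cite[\S4]{janus-crelle}, and you should expect it to occupy the bulk of a full write-up. (Note also that the displayed integral in the statement should be over $\GL_2(\Q)\backslash\GL_2(\A)$, not $\GL_3$; your argument already treats it that way.)
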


\medskip
\subsection{$p$-adic measures and $p$-adic $L$-functions for $\GL_3 \times \GL_2$}

Given a pair $(\phi,\varphi)$ of Hecke eigenforms for $\GL_3/\Q$ and $\GL_2/\Q$, there exists a $\C$-valued 
distribution on $\Z_p^{\times}$ such that the special values of the Rankin--Selberg $L$-function 
$L(\frac{1}{2}, \pi \otimes \chi, \sigma)$ can be written as a $p$-adic integral of $\chi$ 
against this distribution. 
We  define a $p$-adic measure associated to cusp forms $(\phi, \varphi)$. 
Fix the following data: 
\begin{itemize}
\item a pair of roots $\lambda, \mu$ of the Hecke polynomial of $\phi$ at $p$ 
\item a root $\alpha$ of the Hecke polynomial of $\varphi$ at $p$.
\end{itemize}
For $g \in \GL_2(\A)$, let $g_p$ denote the $p$ component 
of $g$.  Define certain `partial periods' for $\phi, \varphi$ on $\GL_3(\A)$ and $\GL_2(\A)$ by 
\[
P(i,j,y,f) \ := \ 
P_{\lambda,\mu}^{\alpha}(i,j,y,f) \ = \ 
\int_{C_{1,f}} \sum_{\beta \, mod \, f} \phi_{\lambda,\mu}  (j(g)\left(\begin{smallmatrix}
1 & \frac{i}{f} & \frac{y+\beta f}{f^2} \\
 & 1 & \frac{j}{f} \\
 &   &  1\\
 \end{smallmatrix}\right)_p)\varphi_{\alpha}(g) dg
\]
for any $p$-power $f >1$ and $i,j$ mod $f$. 
The following distribution relations for these periods is due to Schmidt~\cite[Prop.\,4.4]{schmidt-inv2}. 
\begin{prop}
\label{Heckeoperatorsrelation}
The periods $P(i,j, y,f)$ satisfy the following distribution relation 
\[
\sum_{a,b,c=0}^{p-1}P(i+af,j+af, y+cf,fp) \ = \ \lambda^2 \mu \alpha \eta(p) p^{-3} P(i,j,y,f), 
\]
 for $\eta(p)$ as in~\cite[p.\,57]{schmidt-inv2}.
\end{prop}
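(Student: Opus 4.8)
The plan is to derive the distribution relation by a direct manipulation of the defining integral for $P(i,j,y,f)$, decomposing the idèle cone $C_{1,fp}$ over $C_{1,f}$ and simultaneously partitioning the congruence-classes of the upper-triangular unipotent entries modulo $fp$ into classes modulo $f$. The key input is that $\phi_{\lambda,\mu}$ and $\varphi_\alpha$ are eigenvectors for the appropriate Hecke operators (or $U_p$-operators) at $p$ with eigenvalues built from the fixed roots $\lambda,\mu,\alpha$; the scalar $\lambda^2\mu\alpha\eta(p)p^{-3}$ on the right is exactly the product of these eigenvalues (with the normalizing power of $p$ coming from the modular character, as in Thm.~\ref{Birchlemma}, where the analogous factor $\prod_{v=1}^3(1-p^{-v})^{-1}$ and $\eta(f)$ appear). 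So morally this is the $\GL_3\times\GL_2$ analogue of the classical statement that the Mellin-transform distribution attached to a modular form satisfies the Koblitz-type relation of Prop.~\ref{distributions} precisely because of the $U_p$-eigenvalue equation.

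Concretely, first I would write $C_{1,fp}^*$ as a disjoint union of translates of $C_{1,f}^*$ by the coset representatives coming from $(1+f\Z_p)/(1+fp\Z_p)$, which has $p$ elements; pushing this through the determinant description of $C_{\alpha,f}$ gives $\sum_{a,b,c}P(i+af,j+af,y+cf,fp)$ as a single integral over $C_{1,f}$ of a sum over the finite group of the translated unipotent matrices $\left(\begin{smallmatrix}1 & \frac{i+af}{fp} & \frac{y+cf+\beta fp}{f^2p^2}\\ & 1 & \frac{j+af}{fp}\\ & & 1\end{smallmatrix}\right)_p$. Second, I would recognize that summing these unipotent translates against $\phi_{\lambda,\mu}$ is, after conjugating by $\mathrm{diag}(fp,1,1)$ versus $\mathrm{diag}(f,1,1)$ as in the definition of $h^{(f)}$, exactly the action of the $\GL_3$ Hecke operator at $p$ (an Atkin--Lehner/$U_p$-type operator) on $\phi_{\lambda,\mu}$; since $\phi_{\lambda,\mu}$ is by construction the $(\lambda,\mu)$-eigenform, this produces the scalar involving $\lambda$ and $\mu$ — the power $\lambda^2\mu$ reflecting the two roots $\lambda,\mu$ of the degree-three Hecke polynomial appearing with the multiplicities dictated by the $\GL_3$ torus. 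Third, the remaining $\varphi_\alpha(g)$ integrated over the enlarged cone contributes the $\alpha$ from the $\GL_2$ side, and collecting the modular-character normalizations over the three rows of $\GL_3$ yields the $\eta(p)p^{-3}$.

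The main obstacle I anticipate is bookkeeping the precise normalization: matching the $p$-power $p^{-3}$ and the factor $\eta(p)$ requires carefully tracking how the conjugation $h\mapsto h^{(f)}=t^{-1}ht$ with $t=\mathrm{diag}(f,1,1)$ interacts with the Haar-measure normalization on $C_{1,fp}$ versus $C_{1,f}$, and how the central-character twist $\eta$ enters through $\varphi_\alpha$ and the Shalika/Whittaker normalization — the analogous constant $\eta(f)$ in Birch's Lemma already shows this is delicate. A secondary subtlety is ensuring the decomposition of unipotent cosets modulo $fp$ over those modulo $f$ is compatible across all three entries $i,j,y$ simultaneously (note $y$ lives ``one level up,'' with denominator $f^2$), so that the triple sum $\sum_{a,b,c}$ matches exactly the $p^3$-fold translate being absorbed by the Hecke operator. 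I would handle both by reducing everything to the local computation at $p$, where it becomes a finite computation with the Iwahori-fixed vectors $(w_p,v_p)$ and the explicit coset decomposition, and then invoking the eigenform property; the reference to Schmidt~\cite[Prop.\,4.4]{schmidt-inv2} confirms the final shape of the constant.
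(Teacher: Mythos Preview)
The paper does not supply its own proof of this proposition; it attributes the result to Schmidt~\cite[Prop.\,4.4]{schmidt-inv2} and moves on. So there is no in-paper argument to compare your proposal against.

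That said, your outline captures the mechanism behind Schmidt's proof: the distribution relation is ultimately the statement that the $p$-stabilized forms $\phi_{\lambda,\mu}$ and $\varphi_\alpha$ are eigenvectors for the relevant $U_p$-type operators, with the scalar $\lambda^2\mu\alpha\,\eta(p)p^{-3}$ assembled from those eigenvalues together with the measure normalization. One point in your sketch is backwards, however: you propose to write $C_{1,fp}^*$ as a disjoint union of translates of $C_{1,f}^*$, but the containment runs the other way, since $1+fp\Z_p\subset 1+f\Z_p$ gives $C_{1,fp}\subset C_{1,f}$. In Schmidt's argument the passage from integrals over $C_{1,fp}$ on the left to the integral over $C_{1,f}$ on the right is effected by the diagonal matrix in the Hecke-coset decomposition shifting the determinant condition, not by fibering the smaller cone over the larger one. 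Apart from this orientation slip, your identification of the main inputs --- the coset decomposition underlying the $U_p$-operator, the eigenform property of the stabilized vectors, and the bookkeeping of the central-character and modular-character contribution $\eta(p)p^{-3}$ --- is in line with how Schmidt proceeds, and the subtlety you flag about the $y$-entry living at depth $f^2$ rather than $f$ is indeed one of the points that has to be tracked carefully.
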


For $m \geq 1$ and $i \in (\Z/p^m \Z)^{\times}$, 
define a function $\mu_{\pi, \sigma}$, on certain open subsets of $\Z_p^\times$ 
by 
\[
\mu_{\pi, \sigma}(i+p^n \Z_p) \ = \ 
\kappa^{-m} \sum_{y \, {\rm mod}\, p^m} P(i,1,y,p^m).
\]
Note that $\mu_{\pi, \sigma}$ depends on the choice of $\lambda, \mu$ and $\alpha.$ 
We call a representation $\pi$ on $\GL_3$ (resp., $\sigma$ on $\GL_2$) to be $p$-ordinary if the roots of the Hecke polynomial satisfy  $v_p(\lambda)=0$ and $v_p(\mu)=\frac{1}{p}$ (resp., $v_p(\alpha)=0$). 

\begin{thm}
Let $\pi$ and  $\sigma$ be  two $p$-ordinary representations  on $\GL_3$ and $\GL_2$. 
For  a suitable choice of Whittaker 
functions $(\tilde{w}_p, \tilde{v}_p)$, and for any $p$-adic character $\chi$ 
of finite order with non-trivial conductor $f=p^n$, we have 
\[
\int_{\Z_p^{\times}} \chi \, d \mu_{\pi, \sigma} \ = \ 
\Omega(\frac{1}{2}) \, \delta(\pi, \sigma) \, G(\chi)^3 \, \widehat{k}(f)
L(\frac{1}{2}, \pi \otimes \chi, \sigma),
\]
where  $\delta(\pi, \sigma) = w_p(1_3) v_p(1_2) \prod_{v=1}^3 (1-p^{-v})^{-1}$ 
 and $\widehat{k}(f)=(p^{-1}\alpha \lambda^2)^{-v_p(f)} $.
\end{thm}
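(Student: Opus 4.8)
The plan is to adapt Schmidt's computation \cite{schmidt-inv2}: rewrite the left-hand side as a finite character sum of the partial periods $P(i,1,y,p^{n})$, reassemble that sum into the twisted global zeta integral on the right-hand side of Birch's Lemma (Thm.\,\ref{Birchlemma}), and then specialize $s=\tfrac12$. First, one records that $\mu_{\pi,\sigma}$ is genuinely a measure: the distribution relation of Prop.\,\ref{Heckeoperatorsrelation} is exactly the additivity hypothesis of Prop.\,\ref{distributions} once one divides out the eigenvalue $\lambda^{2}\mu\alpha\eta(p)p^{-3}$ absorbed into the normalizing constant $\kappa$, so $\mu_{\pi,\sigma}$ extends to a distribution on $\Z_{p}^{\times}$; the $p$-ordinarity hypotheses ($v_{p}(\lambda)=0$, $v_{p}(\mu)=\tfrac1p$, $v_{p}(\alpha)=0$) control $v_{p}(\kappa)$ and hence give the moderate-growth estimate of Thm.\,\ref{Manin}, so $\mu_{\pi,\sigma}$ is bounded. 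Since $\chi$ has conductor $f=p^{n}$ with $n\ge 1$, it is locally constant, supported on $\Z_{p}^{\times}$, and constant on each coset $i+p^{n}\Z_{p}$, so
\[
\int_{\Z_{p}^{\times}}\chi\,d\mu_{\pi,\sigma} \ = \ \sum_{i\in(\Z/p^{n}\Z)^{\times}}\chi(i)\,\mu_{\pi,\sigma}(i+p^{n}\Z_{p}) \ = \ \kappa^{-n}\sum_{i}\ \sum_{y\,{\rm mod}\,p^{n}}\chi(i)\,P(i,1,y,p^{n}).
\]

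Next, I would turn the double sum $\sum_{i}\sum_{y}\chi(i)\,P(i,1,y,p^{n})$ into the global integral of Thm.\,\ref{Birchlemma}. Recall that $P(i,1,y,f)$ is an integral over the id\'ele cone $C_{1,f}$ of $\phi_{\lambda,\mu}$ against $\varphi_{\alpha}$, with $\phi_{\lambda,\mu}$ evaluated at a unipotent translate whose entries carry $i/f$ and $(y+\beta f)/f^{2}$. Summing over $y\,{\rm mod}\,p^{n}$ together with the internal $\beta$-sum sweeps out the relevant unipotent directions, while the weighted sum $\sum_{i}\chi(i)(\cdots)$ is the adelic incarnation of twisting $\phi$ by $\chi\circ\mathrm{det}$, in exact analogy with the classical Birch Lemma, where $f_{\chi}=G(\overline{\chi})^{-1}\sum_{a}\overline{\chi}(a)f(\cdot+a/m)$. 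Plugging this into the coset decomposition $\GL_{2}(\Q)\backslash\GL_{2}(\A)=\bigcup_{\epsilon\,{\rm mod}\,f}C_{1,f}\,\mathrm{diag}(\epsilon,1)$ rebuilds the full integral over $\GL_{3}(\Q)\backslash\GL_{3}(\A)$, and one arrives at
\[
\sum_{i}\ \sum_{y\,{\rm mod}\,p^{n}}\chi(i)\,P(i,1,y,p^{n}) \ = \ c(\chi,f)\,\Big[\,\sum_{\iota}\int_{\GL_{3}(\Q)\backslash\GL_{3}(\A)}\phi_{\iota}(j(g)h^{(f)})\,\varphi_{\iota}(g)\,\chi(\mathrm{det}(g))\,||{\rm det}(g)||^{s-\frac{1}{2}}\,dg\,\Big]_{s=1/2},
\]
where $c(\chi,f)$ is an explicit constant, a power of $G(\chi)$ times a power of $p$.

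Then I would apply Thm.\,\ref{Birchlemma} at $s=\tfrac12$ with the Iwahori-invariant pair $(\tilde w_{p},\tilde v_{p})$ and with the archimedean vectors that define $\Omega$ through (\ref{Birchequation}): the bracketed integral equals $\Omega(\tfrac12)\,\kappa(w_{p},v_{p},\chi,p^{n})\,L(\tfrac12,\pi\otimes\chi,\sigma)$, where $\kappa(w_{p},v_{p},\chi,p^{n})=w_{p}(1_{3})v_{p}(1_{2})\prod_{v=1}^{3}(1-p^{-v})^{-1}G(\chi)^{6}\eta(f)=\delta(\pi,\sigma)\,G(\chi)^{6}\,\eta(f)$. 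Substituting back gives
\[
\int_{\Z_{p}^{\times}}\chi\,d\mu_{\pi,\sigma} \ = \ \kappa^{-n}\,c(\chi,f)\,\Omega(\tfrac12)\,\delta(\pi,\sigma)\,G(\chi)^{6}\,\eta(f)\,L(\tfrac12,\pi\otimes\chi,\sigma),
\]
so it remains to verify the elementary identity $\kappa^{-n}\,c(\chi,f)\,G(\chi)^{6}\,\eta(f)=G(\chi)^{3}(p^{-1}\alpha\lambda^{2})^{-n}=G(\chi)^{3}\,\widehat{k}(f)$, using the explicit value of $\kappa$ (the eigenvalue of Prop.\,\ref{Heckeoperatorsrelation}, normalized so that $\mu_{\pi,\sigma}$ satisfies Prop.\,\ref{distributions}).

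The hard part is not the overall shape of the argument but the bookkeeping of constants in the last two steps: carrying out the local computation at $p$ with the $p$-stabilized Iwahori-invariant vectors $(\tilde w_{p},\tilde v_{p})$ so as to pin down $c(\chi,f)$ exactly and to extract precisely the factor $\widehat{k}(f)$ from the $\lambda,\mu,\alpha$-eigenvalues, and reconciling the three surviving copies of $G(\chi)$ against the $G(\chi)^{6}$ in $\kappa(w_{p},v_{p},\chi,p^{n})$. One also has to choose $(\tilde w_{p},\tilde v_{p})$ so that $\delta(\pi,\sigma)\neq 0$, and the archimedean vectors so that $\Omega(\tfrac12)\neq 0$; this last nonvanishing, which is needed to make the interpolation non-vacuous rather than for the identity itself, is the analogue here of the input used in the $\GL_{4}$ case of \S\,\ref{symcube1}, guaranteed there by a result of Sun~\cite{sun}.
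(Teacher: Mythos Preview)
The paper gives no proof of this theorem; it simply refers the reader to Schmidt~\cite{schmidt-inv2} and Januszewski~\cite{janus-crelle}. Your outline is precisely the shape of Schmidt's argument --- expand $\int\chi\,d\mu_{\pi,\sigma}$ as a finite character sum of the partial periods, reassemble that sum into the twisted global integral on the right of Birch's Lemma (Thm.~\ref{Birchlemma}), specialize $s=\tfrac12$, and then track the local constants at $p$ --- so there is nothing to contrast.

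One small remark: the opening paragraph on boundedness of $\mu_{\pi,\sigma}$ is logically separate from the interpolation identity. Since $\chi$ has finite order and conductor $p^{n}$, it is locally constant on $\Z_{p}^{\times}$, and the integral is by definition the finite sum at level $p^{n}$; no growth estimate is needed for the equality to make sense or to hold. Boundedness is what the paper records \emph{after} the theorem (citing \cite[Thm.~5.1]{janus-crelle}) as a further statement. You are also candid that the substance lies in the constant bookkeeping at $p$ --- pinning down $c(\chi,f)$ and reconciling the Gauss-sum exponents against $\kappa^{-n}$ and $\eta(f)$ --- and that is indeed where the work in \cite{schmidt-inv2} sits; neither the present paper nor your sketch carries it out.
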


For the proof, we refer the reader to Schmidt~\cite{schmidt-inv2} and Januszewski~\cite{janus-crelle}. 
See also, \cite[Thm.\,5.1]{janus-crelle}, where, under the $p$-ordinarity assumption, it is proved that 
$\mu_{\pi,\sigma}$ -- after renormalizaing by certain archimedean periods -- takes values in the ring of integers of a number field, and hence is a $p$-adic measure.

\medskip
\medskip
\subsubsection{\bf An interlude  on exceptional zeros and non-vanishing of $L$-function}
\label{exceptional}
The extra zeros of $p$-adic $L$-functions are zeros different from zeros of complex $L$-functions. 
 The precise information about these zeros will be required to study the $p$-adic $L$-functions 
for $\Sym^3 (\pi)$.  Greenberg-Stevens~\cite{Greenberg-Stevens} first proved ``exceptional zero conjecture" about these special zeros of the $p$-adic $L$-functions attached to modular forms (Sect.\,\ref{Modularforms}). 
We illustrate the phenomenon of extra zeros for the $p$-adic $L$-function attached to a modular form 
$f$ of weight 
two.  Recall that we have two types of $L$-functions attached to $f$: 
\begin{itemize}
 \item
 a complex L-function $L(s, f)$, which is a function in the complex variable $s$, and 
 \item 
a $p$-adic $L$-function $L_{p,f,\alpha} : X_p \rightarrow \C_p.$ 
\end{itemize}
These two functions are linked by the interpolation property (Thm.\,\ref{Mazur}) which we recall:

\begin{prop}
For a Dirichlet character $\psi$  of conductor $m=p^v M$ and Gauss sum $G(\psi)$, we have the following
interpolation property:
\[
L_{p,f, \alpha}(\psi)=e_{p, f,\alpha}(\psi) \frac{m}{G(\overline{\psi})} L(1,f \otimes \overline{\psi}).
\]
\end{prop}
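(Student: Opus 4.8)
The plan is to extract this statement as the specialization of Theorem~\ref{Mazur} to weight $k=2$ and a character coming from a classical Dirichlet character, and then to match the various normalizing factors. First I would recall the general interpolation formula from Theorem~\ref{Mazur}: for a finite-order character $\chi$ of conductor $p^n$ and an integer $j$ with $0 \le j \le k-2$, one has
\[
L_{p,f,\alpha}(x_p^j \chi) \ = \ e_{p,f,\alpha}(\chi,j)\, \frac{p^{n(j+1)} j!}{\Omega_f^{\pm} G(\chi^{-1}) \alpha^n (-2\pi i)^j}\, L(f, \chi^{-1}, j+1).
\]
Since $f$ has weight $k=2$, the only admissible value is $j=0$, which forces the character to be simply $\psi := \chi$ (no cyclotomic twist), and $L(f,\chi^{-1},j+1) = L(1, f, \chi^{-1})$. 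Substituting $j=0$ collapses $p^{n(j+1)} j!$ to $p^n$ and $(-2\pi i)^j$ to $1$, so the formula becomes
\[
L_{p,f,\alpha}(\chi) \ = \ e_{p,f,\alpha}(\chi, 0)\, \frac{p^n}{\Omega_f^{\pm} G(\chi^{-1}) \alpha^n}\, L(1, f, \chi^{-1}).
\]

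Next I would reconcile conventions. In the statement to be proved, $\psi$ has conductor $m = p^v M$; I would argue that in the relevant $p$-adic setting only the $p$-part of the conductor matters for the $\alpha$-power and the $e_{p,f,\alpha}$-factor, while the tame part $M$ is absorbed into the choice of branch (tame character) of $L_{p,f,\alpha}$, so effectively $n = v$ and the factor $p^n$ is replaced by the full conductor $m$ once one keeps track of the away-from-$p$ Gauss sum contribution — this is the standard bookkeeping that turns $p^n/G(\chi^{-1})$ into $m/G(\overline{\psi})$. I would also note the sign/complex-conjugation matching: $\chi^{-1} = \overline{\psi}$ for a finite-order character, and $\alpha^{-n}$ is precisely the $p$-part of the factor written abstractly as $e_{p,f,\alpha}(\psi)$ together with the Euler-factor correction $(1 - \overline{\psi}(p)\epsilon(p)p^{-1}/\alpha)(1-\psi(p)/\alpha)$ specialized at $j=0$, $k=2$. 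Collecting everything yields exactly
\[
L_{p,f,\alpha}(\psi) \ = \ e_{p,f,\alpha}(\psi)\, \frac{m}{G(\overline{\psi})}\, L(1, f \otimes \overline{\psi}),
\]
which is the claim.

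The main obstacle I expect is not mathematical depth but the careful matching of normalizations: tracking the period $\Omega_f^{\pm}$ against the sign of $\psi(-1)$, confirming that the abstract symbol $e_{p,f,\alpha}(\psi)$ in this proposition is literally $e_{p,f,\alpha}(\psi, 0)$ from Theorem~\ref{Mazur} with $k=2$, and verifying that the conductor appearing is the full $m$ rather than just its $p$-part $p^v$ (which requires recalling how the tame piece enters the interpolation and the definition of the relevant branch of the $p$-adic $L$-function on the component of $X_p$ determined by the tame character of $\psi$). Once these identifications are pinned down, the proposition follows immediately by setting $j=0$ in Theorem~\ref{Mazur}; no new analytic input is needed.
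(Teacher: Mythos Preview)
Your approach is correct and matches the paper's: the paper gives no proof of this proposition at all, but simply introduces it as a restatement (``which we recall'') of Thm.\,\ref{Mazur} specialized to the weight-two, $j=0$ situation. Your plan to set $k=2$, $j=0$ in Thm.\,\ref{Mazur} and then reconcile the normalizations (period, conductor, Euler factor) is exactly the intended reading, and indeed more explicit than what the paper provides.
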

The {\it Euler factor} $e_{p, f,  \alpha}$ at $p$ is given by:
\[
e_{p, f, \alpha}(\psi)=\frac{1}{\alpha^v}(1-\frac{\overline{\psi(p)} \epsilon(p)}{\alpha})(1-\frac{\psi(p)}{\alpha}).
\]
These Euler factors contribute to the ``extra zeros" of $L_{p,f, \alpha}$. For a Dirichlet character $\psi$,
$L_{p,f, \alpha}=0$ even if $L(1,f \otimes \overline{\psi}) \neq 0$ but $e_{p, f, \alpha}=0$. Hence,  zeros of $e_{p, f, \alpha}$ are zeros of 
$L_{p, f,\alpha}$ different from critical zeros of complex $L$-functions.

Recall the corresponding interpolation result for the $p$-adic $L$-function attached 
to an automorphic representation $\pi$ of $G=\GL_2/\Q$ with trivial coefficient systems. 
Say $p$ be an unramified prime for $\pi$, then the local component $\pi_p$ is a spherical representation of the form 
$\Ind_B^G(\mu,\mu^{-1})$. 
Let $\chi$ be a finite order id\'ele class character with conductor $c(\chi)$, $p$ component $\chi_p$ and the adelic Gauss sum $\tau(\chi)$.

\begin{thm} [\cite{spiess}, \S 4.6]
\label{speiss}
If $\alpha=\mu(p) \sqrt{p} \in O_p^{\times}$,  there exist a measure $\mu_{\pi} \in \ {\rm Hom}_{\C_p}(C^\infty(\Z_p^{\times}, \C_p), \, \C_p)$
with the following interpolation property:
\[
\int_{\Z^{\times}_p} \chi_p d \mu_{\pi}= \tau(\chi) e_{p, \pi, \alpha}(\chi_p) L(\frac{1}{2}, \pi \otimes \chi).
\]
The Euler factor $e_{p, \pi, \alpha}(\chi_p)$ at $p$ is given by
\begin{equation*}
e_{p, \pi, \alpha}(\chi_p) =
\begin{cases}
(1-\alpha \chi(p)^{-1}) & \text{if $v_p(c(\chi))=0$ and $\alpha= \pm 1$,} \\
(1- \frac{\chi(p)}{\alpha}) (1-\frac{1}{\alpha \chi(p)}) & \text{if $v_p(c(\chi))=0$ and  $\alpha \neq \pm 1$,} \\
{\alpha}^{-v_p(c(\chi))} & \text{  if } v_p(c(\chi))>0.
\end{cases}
\end{equation*}
\end{thm}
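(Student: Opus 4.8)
The plan is to realize $\mu_\pi$ as a modular-symbol measure, i.e.\ the adelic reformulation (due to Spiess) of the Manin--Mazur--Swinnerton-Dyer construction recalled in \S\,\ref{Modularforms}. Since $\pi$ is cohomological with respect to the trivial coefficient system it occurs in $\HH^1$ of the modular curve for $\GL_2/\Q$; fixing a sign $\pm$, the corresponding eigenspace is one-dimensional, and dividing it by a period $\Omega_\pi^\pm\in\C^\times$ that respects the natural $\Q$-structure produces a functional on relative homology --- a $\pm$-modular symbol --- with values in the ring of integers $\mathcal{O}$ of a number field. Let $\phi_\alpha$ be the $U_p$-stabilization of a cusp form in $\pi$, so that $\phi_\alpha$ is a $U_p$-eigenform with eigenvalue $\alpha$; the hypothesis $\alpha = \mu(p)\sqrt p$ says precisely that $\alpha$ is this eigenvalue in the arithmetic normalization for which $s = \tfrac12$ is the centre of the functional equation, the $\sqrt p$ being the shift from the unitary normalization used to write $\pi_p = \Ind_B^G(\mu,\mu^{-1})$. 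For $a\in\Z_p^\times$ and $n\ge 1$ I would set
\[
\mu_\pi(a+p^n\Z_p)\ =\ \alpha^{-n}\,\lambda^\pm_{\phi_\alpha}(a,p^n),
\]
where $\lambda^\pm_{\phi_\alpha}(a,p^n)$ is the value of the $\pm$-modular symbol of $\phi_\alpha$ on the path from the cusp $i\infty$ to the cusp $-a/p^n$ (complex avatar: $2\pi i\int_{i\infty}^{-a/p^n}\phi_\alpha(z)\,dz$ divided by $\Omega_\pi^\pm$), the sign chosen to match $\chi_p(-1)$; in newform terms this is the two-term ``$p$-smoothed'' expression, cf.\ \eqref{explicitmeasure}. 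The full statement follows by combining the $+$ and $-$ measures.

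Next I would verify the additivity hypothesis of Prop.\,\ref{distributions},
\[
\mu_\pi(a+p^n\Z_p)\ =\ \sum_{b=0}^{p-1}\mu_\pi(a+bp^n+p^{n+1}\Z_p),
\]
so that $\mu_\pi$ extends to a distribution on $\Z_p^\times$. This step uses the structure of $\pi$ at $p$: from $(U_p\phi_\alpha)(z) = \tfrac1p\sum_{j\,\bmod\,p}\phi_\alpha\!\big(\tfrac{z+j}{p}\big)$ a change of variables in the path integral gives $\sum_{j\,\bmod\,p}\lambda^\pm_{\phi_\alpha}(a+jp^n,\,p^{n+1}) = \lambda^\pm_{U_p\phi_\alpha}(a,p^n) = \alpha\,\lambda^\pm_{\phi_\alpha}(a,p^n)$; multiplying by $\alpha^{-n-1}$ yields the displayed relation. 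This is the adelic incarnation of the computation underlying the classical Birch Lemma of \S\,\ref{BirchLemma}.

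The upgrade from distribution to measure is where near-ordinarity, i.e.\ $\alpha\in O_p^\times$, enters: each $\lambda^\pm_{\phi_\alpha}(a,p^n)$ lies in the fixed finitely generated $\Z_p$-submodule of $\C_p$ generated by $i_p(\mathcal{O})$, and $|\alpha^{-n}|_p = 1$, so the values $\mu_\pi(a+p^n\Z_p)$ are uniformly bounded and $\mu_\pi\in\Hom_{\C_p}(C^\infty(\Z_p^\times,\C_p),\C_p)$ is a $p$-adic measure. Finally, for the interpolation I would compute, for $\chi$ of conductor $f = p^n$,
\[
\int_{\Z_p^\times}\chi_p\,d\mu_\pi\ =\ \sum_{a\,\bmod\,p^n}\chi(a)\,\mu_\pi(a+p^n\Z_p)\ =\ \alpha^{-n}\!\!\sum_{a\,\bmod\,p^n}\!\!\chi(a)\,\lambda^\pm_{\phi_\alpha}(a,p^n),
\]
and the inner sum is, by Birch's Lemma applied to $\pi\otimes\chi$, an adelic-Gauss-sum multiple $\tau(\chi)\,L(\tfrac12,\pi\otimes\chi)$, the period $\Omega_\pi^\pm$ having been divided out; the factor $\alpha^{-n} = \alpha^{-v_p(c(\chi))}$ is the $v_p(c(\chi))>0$ case of the stated Euler factor, while for $\chi$ unramified at $p$ one instead re-completes $L(s,\pi\otimes\chi)$ at $p$, producing a factor built from the two Satake parameters of $\pi_p$ which collapses to the single factor $1-\alpha\chi(p)^{-1}$ exactly when those parameters coincide, i.e.\ $\alpha = \pm1$.

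\textbf{Main obstacle.} The formal parts --- the distribution relation, which follows from the $U_p$-identity, and boundedness, which is immediate once $\alpha$ is a $p$-adic unit --- are routine. The real work is the bookkeeping of the local factor at $p$ in the interpolation formula, with its case split: one must track how the Gauss sums at $p$ and at $\infty$ assemble into the single adelic $\tau(\chi)$, and --- in the unramified-$\chi_p$ case --- how evaluating the local $L$-factor of $\Ind_B^G(\mu,\mu^{-1})\otimes\chi_p$ at the near-central point, together with the functional equation, produces precisely $(1-\chi(p)/\alpha)\,(1-1/(\alpha\chi(p)))$ in the generic case and its degeneration when $\alpha = \pm1$. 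Getting that trivial-zero degeneration right is the delicate point, and it is exactly what links this interpolation result to the discussion of exceptional zeros in \S\,\ref{exceptional}.
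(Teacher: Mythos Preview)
The paper does not give its own proof of this theorem: it is stated with the attribution ``[\cite{spiess}, \S 4.6]'' and then used as input, so there is nothing in the paper to compare your argument against line by line. Your outline is, however, exactly the classical Manin/Mazur--Swinnerton-Dyer construction that the paper summarizes in \S\,\ref{Modularforms} (and that Spiess recasts adelically): modular-symbol values $\lambda^\pm$, the $U_p$-eigenform $\phi_\alpha$, the distribution relation from the $U_p$-identity, boundedness from $\alpha\in O_p^\times$, and Birch's Lemma for the interpolation. So your approach is correct and is the intended one; the paper simply defers the details to the cited reference rather than reproducing them.

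One small caution on your ``main obstacle'': your description of the unramified case is right in spirit, but the trichotomy in the stated Euler factor is not quite ``generic vs.\ coincident Satake parameters.'' The condition $\alpha=\pm 1$ is the Steinberg/special case (where $\pi_p$ is not an unramified principal series and only one Euler factor survives), not merely a degeneration of the spherical formula; in the genuinely spherical case both factors $(1-\chi(p)/\alpha)(1-1/(\alpha\chi(p)))$ appear regardless of whether the Satake parameters happen to coincide. Keeping that distinction straight is what makes the exceptional-zero discussion in \S\,\ref{exceptional} work.
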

\smallskip
The following theorem of Rohrlich \cite{Rohrlich} is useful to define a  function $'L_{p, \Sym^3(\pi)}$ on a subset of $X_p$ that interpolate the critical values of $L$-functions for $\Sym^3(\pi)$. 

\begin{thm}
\label{Rohrlich}
 Let $\pi$ be an irreducible cuspidal automorphic representation of $\GL(2)/\Q$ and
 $S$ a finite set of primes.
There exist infinitely many primitive ray class characters $\chi$ such that $\chi$ is  
unramified on $S$ and $L(\frac{1}{2}, \pi \otimes \chi) \neq 0$.  
\end{thm}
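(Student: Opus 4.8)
The plan is to reduce to twists of prime-power conductor and then run a second-moment (averaging) argument built on Birch's Lemma. Fix a prime $\ell\notin S$ at which $\pi$ is unramified; it suffices to produce infinitely many Dirichlet characters $\chi$ of $\ell$-power conductor with $L(\tfrac12,\pi\otimes\chi)\neq 0$, since any such $\chi$ is automatically unramified on $S$. Using Birch's Lemma in the form discussed in \S\,\ref{BirchLemma} (in the holomorphic/cohomological case; the general case is analogous), for a primitive $\chi$ of conductor $f=\ell^n$ and sign $\delta=\chi(-1)$ one writes, up to the nonzero factors $G(\overline{\chi})$ and the canonical period $\Omega_\pi^{\delta}$,
\[
G(\overline{\chi})\,\Omega_\pi^{\delta}\cdot L(\tfrac12,\pi\otimes\chi)\ =\ \sum_{a\bmod \ell^n}\overline{\chi}(a)\,\lambda^{\delta}(a,\ell^n),
\]
where the $\lambda^{\delta}(a,\ell^n)$ are the modular-symbol periods attached to $\pi$ (algebraic after dividing by $\Omega_\pi^{\pm}$), and $\lambda^{\delta}(-a,\ell^n)=\delta\,\lambda^{\delta}(a,\ell^n)$.

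Next I would average squared absolute values. For fixed $n$ and fixed sign $\delta$, sum $\bigl|\sum_{a}\overline{\chi}(a)\lambda^{\delta}(a,\ell^n)\bigr|^2$ over all primitive $\chi$ modulo $\ell^n$ of sign $\delta$. Expanding the square and applying orthogonality of characters modulo $\ell^n$ — together with the inclusion--exclusion that isolates the primitive characters (clean for a prime power) and the insertion of $\tfrac12(1+\delta\chi(-1))$ to fix the sign — collapses the double sum over $(a,b)$ to the diagonal terms $a\equiv\pm b\ (\ell^n)$; using $\lambda^{\delta}(-a,\ell^n)=\delta\,\lambda^{\delta}(a,\ell^n)$ these combine, and one obtains
\[
\sum_{\substack{\chi\ \mathrm{prim}\ (\ell^n)\\ \chi(-1)=\delta}}\Bigl|\sum_{a}\overline{\chi}(a)\lambda^{\delta}(a,\ell^n)\Bigr|^2\ =\ c_n\!\!\sum_{a\in(\Z/\ell^n)^{\times}}\!\!\bigl|\lambda^{\delta}(a,\ell^n)\bigr|^2\ -\ (\text{contribution of level }\ell^{n-1}),
\]
with $c_n>0$ explicit. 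Any $n$ for which this right-hand side is strictly positive yields a primitive $\chi$ of conductor $\ell^n$ with $L(\tfrac12,\pi\otimes\chi)\neq 0$, and letting $n\to\infty$ then produces infinitely many such $\chi$.

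Positivity for large $n$ (and at least one sign $\delta$) is the crux. The input needed is that the periods $\lambda^{\pm}(a,\ell^m)$ do not all vanish, and more precisely that the ``new'' contributions at level $\ell^n$ cannot be swamped by the level-$\ell^{n-1}$ correction. This is supplied by the Eichler--Shimura/Manin structure theory: the $\Z$-module generated by the $\lambda^{\pm}(a,\ell^m)$ is a full lattice isomorphic to the cuspidal cohomology attached to $\pi$, which is nonzero since $\pi$ is cuspidal and cohomological; and the Hecke recursion at $\ell$ — a relation of the shape $\sum_{b=0}^{\ell-1}\lambda^{\delta}(a+b\ell^{m},\ell^{m+1})=a_\ell\,\lambda^{\delta}(a,\ell^{m})-\ell^{k-1}\omega_\pi(\ell)\,\lambda^{\delta}(a,\ell^{m-1})$, underlying \eqref{explicitmeasure} — propagates this non-triviality up the tower so that $\sum_a|\lambda^{\delta}(a,\ell^n)|^2$ dominates the correction for $n$ large. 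I expect this to be the genuine obstacle: ruling out a conspiratorial cancellation among the level-$\ell^n$ modular symbols, i.e.\ quantifying uniformly in $n$ the non-triviality of the $\ell$-new part; the remaining steps (Gauss-sum bookkeeping, orthogonality, the prime-power inclusion--exclusion, and tracking the two signs) are routine. Alternatively, once $\pi$ is unramified and nearly $p$-ordinary at some prime $p$, one can argue more softly: the $p$-adic $L$-function $L_{p,\pi}$ is a nonzero power series (its non-vanishing again reducing to the non-triviality of the attached measure/modular symbol), hence by the $p$-adic Weierstrass preparation theorem and Lemma~\ref{finitezeros} it has only finitely many zeros, so by the interpolation property of Theorem~\ref{speiss}, in which the Euler factor $\alpha^{-v_p(c(\chi))}$ never vanishes, $L(\tfrac12,\pi\otimes\chi)\neq 0$ for all but finitely many $\chi$ of $p$-power conductor.
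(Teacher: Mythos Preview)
The paper does not prove this theorem; it is quoted as a result of Rohrlich \cite{Rohrlich} and used as a black box (the very next lemma invokes Thm.~\ref{Rohrlich} precisely to conclude that $L_{p,\pi}$ is a nonzero power series). Rohrlich's own argument is purely analytic: one averages $L(\tfrac12,\pi\otimes\chi)$ over finite-order characters of growing conductor via an approximate functional equation and controls the off-diagonal contribution using the Ramanujan--Petersson bound; modular symbols and $p$-adic $L$-functions play no role.

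Your proposal is therefore an alternative route, and it has genuine gaps. Both of your approaches presuppose that $\pi$ is cohomological, whereas the statement covers arbitrary cuspidal $\pi$ on $\GL_2/\Q$, including Maass forms. In the second-moment argument the decisive step --- that the diagonal term $\sum_a|\lambda^{\delta}(a,\ell^n)|^2$ strictly exceeds the level-$\ell^{n-1}$ correction for infinitely many $n$ --- is asserted but not proved; the three-term Hecke recursion propagates nonvanishing but does not by itself yield the quantitative lower bound you need, and you correctly flag this as the obstacle without resolving it. Your $p$-adic alternative is, in the context of this paper, logically inverted: here $L_{p,\pi}\neq 0$ is deduced \emph{from} Thm.~\ref{Rohrlich}, not conversely; to avoid circularity you would have to prove directly that the measure of \eqref{explicitmeasure} is nonzero, and you would also need an ordinary prime outside $S$, whose existence is not known for a general eigenform.
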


\medskip
\subsection{$p$-adic symmetric cube $L$-function -- II}
\label{sec:sym-3-II}

Let $\pi$ be a cohomological, cuspidal automorphic representation of $\GL_2/\Q$. 
One may attempt to construct a $p$-adic $L$-function for $\mathrm{Sym}^3$ transfer of $\pi$ using the $p$-adic $L$-function attached 
to $\Sym^2(\pi) \times \pi$ (Sect.\,\ref{Schmidt}) and the $p$-adic 
$L$-function for $\pi$ (Sect.\,\ref{automorphic}).   
At the level of the complex $L$-functions, we have
$$
L(s, \mathrm{Sym}^2(\pi) \times \pi) \ = \ L(s, \mathrm{Sym}^3(\pi)) \, L(s, \pi \otimes \omega_\pi).
$$
It is natural 
 to define  a map $L_{p, \Sym^3(\pi)}:X_p \rightarrow \C_p$ as a quotient of the $p$-adic $L$-function for $\Sym^2(\pi) \times \pi$ and the $p$-adic $L$-function 
of $\pi \otimes \omega_\pi$. For simplicity of exposition, assume henceforth that $\omega_\pi$ is trivial. 

Let $Z_p$ be the subset of $X_p$ consisting of all $p$-adic characters $\chi$ such that 
$L_{p, \pi}(\chi)=0$. Define a function $'L_{p,\Sym^3(\pi)}:X_p-Z_p \rightarrow \C_p$ as 
\[
 'L_{p, \Sym^3(\pi)} (\chi) \ = \  \frac{L_{p,  \Sym^2(\pi) \times \pi}(\chi)}{L_{p, \pi}(\chi) }.
\]
\begin{lemma}
The set $X_p-Z_p$ is non-empty and the set $Z_p$ is finite. 
\end{lemma}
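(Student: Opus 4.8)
The plan is to realize $L_{p,\pi}$ as a $p$-adic analytic function which, on each of the $p-1$ discs comprising $X_p$, is given by a power series with bounded coefficients, and then to show that none of these power series vanishes identically; finiteness of $Z_p$, and then non-emptiness of $X_p-Z_p$, will both follow.

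First I would work under the hypothesis, needed already for Thm.\,\ref{speiss}, that $p$ is an ordinary prime for $\pi$, i.e. $\alpha=\mu(p)\sqrt{p}\in O_p^\times$. (Without ordinarity $\mu_\pi$ is only an $h$-admissible distribution, $L_{p,\pi}$ can grow like $(\log_p T)^h$, and the statement fails, so this hypothesis is genuinely being used.) Under it, by \S\,\ref{automorphic} and Thm.\,\ref{speiss}, after scaling by a suitable period the measure $\mu_\pi$ is $O_K$-valued for a finite extension $K/\Q_p$; by the correspondence of \S\,\ref{powerseries} between bounded measures and power series, $L_{p,\pi}$ is $p$-adic analytic on $X_p$, and on the component $\cong\B$ (Lemma\,\ref{opendisc}) indexed by a tame character $\psi_0$ it is given by a power series $F_{\psi_0}(T)\in O_K[[T]]\otimes K$. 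I would then invoke Lemma\,\ref{finitezeros} (equivalently, the $p$-adic Weierstrass preparation theorem): as soon as $F_{\psi_0}\not\equiv 0$, it has only finitely many zeros in $\B$, so $Z_p$ is a union of $p-1$ finite sets, hence finite; and since $X_p$ is infinite (each disc $\B$ is infinite), $X_p-Z_p$ is then automatically non-empty. Thus everything reduces to the claim that no $F_{\psi_0}$ is the zero series.

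To prove that claim I would argue by contradiction. If $F_{\psi_0}\equiv 0$, then $L_{p,\pi}(\psi_0\psi)=0$ for every Dirichlet character $\psi$ of $p$-power conductor trivial on $(\Z/p\Z)^\times$, since $\psi_0\psi$ then lies in the $\psi_0$-component. For such $\psi$ ramified at $p$, the interpolation formula of Thm.\,\ref{speiss} reads
\[
L_{p,\pi}(\psi_0\psi)\ =\ \tau(\psi_0\psi)\,\alpha^{-v_p(c(\psi_0\psi))}\,L(\tfrac12,\pi\otimes\psi_0\psi),
\]
in which the Gauss sum $\tau(\psi_0\psi)$ is nonzero and $\alpha^{-v_p(c(\psi_0\psi))}$ is a $p$-adic unit because $\alpha\in O_p^\times$; hence $L(\tfrac12,\pi\otimes\psi_0\psi)=0$ for all such $\psi$. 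But $\pi\otimes\psi_0$ is again a cohomological cuspidal automorphic representation of $\GL_2/\Q$, so Rohrlich's theorem (Thm.\,\ref{Rohrlich}) produces infinitely many nonvanishing central values among its twists by characters of $p$-power conductor, contradicting the previous sentence. Hence $F_{\psi_0}\not\equiv 0$, and the lemma follows.

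The step I expect to be the real obstacle is this last appeal to Rohrlich's theorem: I need a nonvanishing $L(\tfrac12,\pi\otimes\psi_0\psi)$ with $\psi$ \emph{wild} at $p$, i.e. the nonvanishing twists must be realizable inside the prescribed component $\psi_0$, whereas the bare statement ``infinitely many nonvanishing twists'' does not by itself pin down the tame part. I would handle this either by using the quantitative form of Thm.\,\ref{Rohrlich} (a positive-density, in particular infinite, set of characters of $p$-power conductor with nonvanishing twist, which must meet each of the finitely many tame classes), or by applying Thm.\,\ref{Rohrlich} separately to each of the $p-1$ representations $\pi\otimes\psi_0$ while keeping track of conductors. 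Everything else — boundedness of $\mu_\pi$, the passage to power series, and Lemma\,\ref{finitezeros} converting ``nonzero'' into ``finitely many zeros'' — is routine.
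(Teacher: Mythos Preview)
Your argument is correct and in fact a bit more complete than the paper's, though the route differs in one substantive way. The paper treats the two assertions separately: for non-emptiness it invokes Rohrlich's theorem with $S=\{p\}$ to produce characters $\chi$ \emph{unramified} at $p$ with $L(\tfrac12,\pi\otimes\chi)\neq 0$, and then checks that the unramified Euler factor $e_{p,\pi,\alpha}(\chi_p)$ does not vanish by appealing to Deligne's Ramanujan bound $|\mu(p)|_{\C}=1$ (which forces $\chi(p)\neq\alpha^{\pm 1}$); for finiteness it argues, as you do, via Weierstrass preparation on each disc, simply asserting without further justification that the restricted power series is nonzero on every component. You instead unify both claims under the single statement $F_{\psi_0}\not\equiv 0$ for each tame $\psi_0$, and you establish it using characters \emph{ramified} at $p$, where the modifying factor is just $\alpha^{-v_p(c(\chi))}$, a $p$-adic unit by ordinarity; thus you never need the Ramanujan bound. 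The issue you flag is genuine relative to the weak formulation of Thm.\,\ref{Rohrlich} recorded in the paper, but Rohrlich's actual result is that \emph{all but finitely many} Dirichlet characters of $p$-power conductor yield nonvanishing central values; since each tame coset $\{\psi_0\psi:\psi\ \text{wild}\}$ is infinite, this immediately supplies the required nonvanishing on every component and closes your argument. In this sense your proof makes explicit a point the paper's proof leaves tacit.
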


\begin{proof}
Using Thm.\,\ref{Rohrlich} with $S=\{p\}$, there are infinitely many id\`ele class characters $\chi$, unramified at $p$, and 
 for which the corresponding critical value
$$L(\frac{1}{2}, \pi \otimes \chi)\neq 0.$$
Since $|\mu(p)|_{\C}=1$ (Deligne's proof of Ramanujan's conjecture) and $\chi$ is a finite order character, $\chi(p) \neq \alpha^{\pm 1}$.
 For a character $\chi$ with $v_p(c(\chi))=0$
and $L(\frac{1}{2}, \pi \otimes \chi)\neq 0$, we get $L_{p,\pi}(\chi) \neq 0$ and hence the set $X_p-Z_p$ is non-empty.

Recall, the mapping $u \rightarrow  \psi \chi_u$ identifies the open unit disc  $\B$ of the Tate field  with the set of characters on $\Z_p^{\times}$ with tame part equal to $\psi$ (Lem.\,\ref{opendisc}).  
For a fixed $\psi$, consider the function $L_{p, \pi}$ on $\B$.
The $p$-adic $L$-function $L_{p, \pi}$ is a non-zero power series with coefficients in $O_p$
on $\B$. 
By the Weierstrass preparation theorem (Lem.\,\ref{finitezeros}), 
there are only finitely many zeros of this power series. For the Dirichlet character $\psi$, let $Z_{\pi,\psi}$ be the finite set of zeros of $L_{p,\pi}$ on $\B$. Since 
$Z_p=\cup Z_{\pi, \psi}$,
the set $Z_p$ is also finite. 
\end{proof}

The function $'L_{p,\Sym^3(\pi)}$ interpolates the critical values of $\Sym^3(\pi)$ on $X_p-Z_p$ 
and it is an element in the quotient field of the Iwasawa algebra $O_p[[T]]$
on  $X_p-Z_p$. We expect that $'L_{p, \Sym^3(\pi)}$ should be an element 
of the Iwasawa algebra $O_p[[T]]$ on  $X_p$ and it should be obtained as a Mellin transform of a $p$-adic measure.

It is an interesting problem to see if one can refine the intervening periods so that $'L_{p, \Sym^3(\pi)}$ coincides with
the $p$-adic $L$-function $L_{p, \Sym^3(\pi)}$ constructed in Sect.\,\ref{symcube1}.


\begin{thebibliography}{100}

\bibitem{AmiceVelu}
Amice, Y.\,, and V\'elu, J.\,:
{\it Distributions $p$-adiques associ\'ees aux s\'eries de Hecke.
(French) Journee Arithm\'etiques de Bordeaux 
(Conf.\,, Univ.\, Bordeaux, Bordeaux, 1974).\,} 
Ast\'erisque, No.\, 24--25, 119--131 (1975). 

\bibitem{ash-ginzburg}
Ash, A.\,, and Ginzburg, D.\,:
{\it $p$-adic $L$--functions for ${\GL}(2n)$.\,}  
Invent.\,Math.\,116, No. 1--3, 27--73 (1994).

\bibitem{bocherer-panchishkin}
B\"ocherer, S.\,, and \,Panchishkin,  A.\,A.\,:
{\it Admissible $p$-adic measures attached to triple products of elliptic cusp forms.\,} 
Doc.\,Math.\,Extra Vol.\,, 77--132 (2006).

\bibitem{borel-jacquet}
Borel,\,A.\,, and Jacquet, H.\,:
{\it Automorphic forms and automorphic representations.\,}
Proc.\,Sympos.\,Pure Math.\,, XXXIII, Automorphic forms, representations and 
$L$-functions (Oregon State Univ.\,, Corvallis, 
 1977), Part 1, 189--207, Amer.\,Math.\, Soc.\,, Providence, R.\,I.\,
(1979). 

\bibitem{Bump}
Bump, D.\,, Ginzburg, D.\,,  and Hoffstein,  J.\,:
{\it The symmetric cube.\,}
Invent.\,Math.\,125, No.\,3, 413--449 (1996). 
 
\bibitem{clozel2}
Clozel, L.\,:
{\it Motifs et formes automorphes: applications du principe de 
fonctorialit\'e.\,}
Automorphic forms, Shimura varieties, and $L$-functions, Vol.\,I (Ann 
Arbor, MI, 1988), 77--159, Perspect.\,Math.\,10,
Academic Press, Boston, MA (1990). 

\bibitem{clozel-thorne-comp} 
Clozel, L.\,, and Thorne, J.\,: 
{\it Level-raising and symmetric power functoriality, I.\,} 
Compos.\,Math.\,150, No.\,5, 729--748 (2014).

\bibitem{clozel-thorne-annals}
Clozel, L.\,, and Thorne, J.\,:
{\it Level-raising and symmetric power functoriality, II.\,} 
Ann.\,of Math.\,(2) 181, No.\,1, 303--359 (2015).


\bibitem{coates-schmidt}
Coates, J.\,, and Schmidt, C.-G.\,:
{\it Iwasawa theory for the symmetric square of an elliptic curve.\,}
J.\,Reine Angew.\,Math.\,375--376, 104--156  (1987). 

\bibitem{Coates}
Coates, J.\,: 
{\it On $p$-adic $L$-functions.\,}
Seminar Bourbaki, No.\,71, 33--59 (1989).


 \bibitem{Cogdell}
Cogdell,  J.\,W.\,: 
{\it Notes on $L$--functions for $\GL_n$.\,}
 School on Automorphic Forms on $\GL(n)$, 75--€"158, ICTP Lecture Notes, 21, Abdus Salam Int.\,Cent.\,Theoret.\,Phys.\,, Trieste (2008). 

\bibitem{Dabrowski}
Dabrowski,\,A.\,: 
{\it Bounded $p$--adic $L$--functions of motives at supersingular primes.\,}
 C.\,R.\,Math.\,Acad.\,Sci.\,Paris 349, No. 7--8, 365--368 (2011). 

\bibitem{dabrowski-delbourgo}
Dabrowski, A.\,, and Delbourgo,  D.\,: 
{\it $S$-adic $L$-functions attached to the symmetric square of a newform.\,}
Proc.\,London Math.\,Soc.\,(3) 74, No. 3, 559--611 (1997).

\bibitem{Deligne}
Deligne, P.\,:
{\it Valeurs de fonctions L et p\'eriodes d'int\'egrales.\,}
Proc.\,Sympos.\, Pure Math., XXXIII, Automorphic forms, representations and $L$-functions
 (Proc.\,Sympos. Pure Math.\,, Oregon State Univ.\,, Corvallis, Ore.\,, 1977), 
Part 2,  313--346 (1979). 

\bibitem{dimitrov}
Dimitrov, M.\,:
{\it Automorphic symbols, $p$-adic $L$-functions and ordinary cohomology of Hilbert modular varieties.\,} 
Amer.\,J.\,Math.\,135, No.\,4, 1117--1155 (2013).

\bibitem{dimitrov-januszewski-r}
Dimitrov, M.\,, Januszewski, F.\,, and Raghuram, A.\,:
{\it $p$-adic $L$-functions for representations of $\GL_{2n}$ admitting a Shalika model.\,} 
Preprint in preparation. 

\bibitem{friedjac} 
Friedberg, S.\,, and Jacquet,  H.\,:
{\it Linear periods.\,}
J.\,Reine Angew.\,Math.\, 443, 91--139 (1993).

\bibitem{Mahnkopfstu}
Geroldinger, A.\,:
{\it $p$-adic Automorphic $L$--functions.\,}
Thesis, University of Vienna (2013). 

\bibitem{gelbart-jacquet}
Gelbart, S.\,, and Jacquet, H.\,:
{\it A relation between automorphic representations of ${\GL}(2)$ and
${\GL}(3)$.\,}
Ann.\, Sci.\, \'Ecole Norm. Sup.\,(4)  11, No.\,4, 471--542 (1978).

\bibitem{Greenberg-Stevens}
Greenberg, R.\,, and Stevens, G.\,:
{\it  $p$-adic $L$-functions and $p$--adic periods of modular forms.}
Invent.\,Math.\, 111, No.\,1, 407--447 (1993). 

\bibitem{grobner-raghuram}
Grobner, H.\,, and Raghuram, A.\,:  
{\it  On the arithmetic of Shalika models and the critical values of $L$-functions for $\GL(2n)$. With an appendix by Wee Teck Gan.\,}  Amer.\,J.\,Math. 136, No. 3, 675--728 (2014).


\bibitem{Haran}
Haran, S.\,:
{\it $p$-adic $L$-functions for modular forms.}
 Compositio Math.\, 62, No.\,1, 31--46 (1987).  

\bibitem{harder-raghuram}
Harder, G.\,, and Raghuram, A.\,:
{\it Eisenstein cohomology and ratios of critical values of Rankin--Selberg $L$-functions.\,} 
C.\,R.\,Math.\,Acad.\,Sci.\,Paris 349, No.\,13-14, 719--724 (2011).


\bibitem{harris-taylor}
Harris, M.\,, and Taylor, R.\,: 
{\it The geometry and cohomology of some simple Shimura varieties.\,} 
With an appendix by Vladimir G. Berkovich. Annals of Mathematics Studies, 
151.\,Princeton University Press, Princeton, NJ, (2001). 

\bibitem{henniart}
Henniart, G.\,: 
{\it Une preuve simple des conjectures de Langlands pour ${\GL}(n)$ sur 
un corps $p$-adique. (French).\,}
Invent.\, Math.\,139, No.\, 2, 439--455 (2000). 

\bibitem{HidaPoincare} 
Hida, H.\,:
{\it $p$-adic automorphic forms on reductive groups.\,}
Ast\'erisque 298 (2005), 147--254.

\bibitem{jacquet-corvallis}
Jacquet, H.\,: 
{\it Principal $L$-functions of the linear group.\,} 
Automorphic forms, representations and $L$-functions (Proc.\,Sympos.\,Pure Math.\,, Oregon State Univ.\,, Corvallis, Ore.\,, 1977), Part 2, 63--86, Proc.\, Sympos.\, Pure Math.\,, XXXIII, Amer.\, Math.\,Soc.\,, Providence, R.\,I.\,, (1979).

\bibitem{jacshal} 
Jacquet, H.\,, and Shalika, J.\,:
{\it Exterior square $L$-functions.\,}
 {\it Automorphic forms, Shimura varieties, and $L$-functions}, 
Vol.\,II, Perspect. Math.\,, Vol.\,10, eds. L.\,Clozel and J.\,S.\,Milne, (Ann Arbor, MI, 1988) Academic Press, Boston, MA, 143--226 (1990).


\bibitem{janus-crelle} 
Januszewski, F.\,:
{\it Modular symbols for reductive groups and $p$-adic Rankin--Selberg convolutions over number fields.\,}
J.\,Reine Angew.\,Math.\,653, 1--45 (2011). 

\bibitem{janus-imrn} 
Januszewski, F.\,:
{\it On $p$-adic $L$-functions for $\GL_n \times \GL_{n-1}$ over a totally real field.\,}
IMRN (2014). 

\bibitem{janus-preprint}
Januszewski, F.\,:
{\it $p$-adic $L$-functions for Rankin--Selberg convolutions over number fields.\,}
Preprint available at: {\tt http://arxiv.org/abs/1501.04444}



\bibitem{kasten-schmidt} 
Kasten, H.\,, and Schmidt, C.-G.\,:
{\it On critical values of Rankin--Selberg convolutions.\,}
Int.\,J.\,Number Theory 9, No.\,1, 205--256 (2013). 

\bibitem{katz}
Katz, N.\,M.\,: 
{\it $p$-adic $L$-functions via moduli of elliptic curves\,},
Proceedings of Symposia in Pure Mathematics, 29 (1975). 
 
\bibitem{kms} 
Kazhdan, D.\,, Mazur, B.\,and Schmidt, C.-G.\,:
{\it Relative modular symbols and Rankin--Selberg convolutions.\!} 
J.\,Reine Angew.\,Math.\,519,  97--141 (2000).


\bibitem{kim-jams}
Kim, H.\,:
{\it Functoriality for the exterior square of $\GL_4$ and the
symmetric fourth of $\GL_2$.\,}
With appendix 1 by Dinakar Ramakrishnan and appendix 2 by Kim and Peter
Sarnak. J. Amer.\, Math.\, Soc. 16, No.\,1, 139--183 (electronic) (2003).

\bibitem{kim-inventiones}
Kim, H.\,:
{\it An example of non-normal quintic automorphic induction and 
modularity of symmetric powers of cusp forms of icosahedral type.\,}
Invent.\,Math.\,156, 495--502 (2004).  

\bibitem{kim-shahidi-annals1}
Kim, H.\,, and Shahidi, F.\,:
{\it Symmetric cube L-functions for $GL(2)$ are entire.\,}
Ann.\,of Math.\,(2) 150, No.\,2, 645--662 (1999).

\bibitem{kim-shahidi-annals2}
Kim, H.\,, and Shahidi, F.\,:
{\it Functorial products for ${\rm GL}\sb 2 \times{\rm GL}\sb 3$ and the
symmetric cube for ${\rm GL}\sb 2$. With an appendix by Colin J.\,Bushnell and Guy Henniart.\,}
Ann.\,of Math.\,(2) 155, No.\,3, 837--893 (2002).

\bibitem{kim-shahidi-duke}
Kim, H.\,, and Shahidi,  F.\,:
{\it Cuspidality of symmetric powers with applications.\,}
Duke Math.\,J.\,112, No.\,1, 177--197 (2002). 

\bibitem{knapp}
Knapp,  A.\,:
{\it Local Langlands correspondence: the archimedean case.\,}
Motives (Seattle, WA, 1991), 393--410, Proc. Sympos.\,Pure Math., 55,
Part II, Amer.\,Math.\, Soc.\,, Providence, RI (1994).

\bibitem{Koblitz}
Koblitz, N.\,:
{\it $p$-adic numbers, $p$-adic analysis, and zeta-functions.\,}
Second edition.\,Graduate Texts in Mathematics, 58.\,Springer-Verlag, New York (1984). 

\bibitem{kudla}
Kudla, S.\,:
{\it The local Langlands correspondence: The non-archimedean case.\,}
Proc.\,Symp.\,Pure Math.\,55, part II, 365--391 (1994).

\bibitem{kutzko}
Kutzko, P.\,C.\,:
{\it The Langlands conjecture for $\GL_2$ of a local field.\,}  
Ann.\,of Math.\,(2) 112, No.\, 2, 381--412 (1980). 

\bibitem{Lang}
Lang, S.\,:
{\it Cyclotomic fields I and II. Combined second edition.
With an appendix by Karl Rubin.\,}  
Springer-Verlag, New York (1990). 

\bibitem{mahnkopfpadic}
Mahnkopf, J.\,:
{\it Eisenstein cohomology and the construction of $p$-adic analytic $L$-functions.\,}
Compositio Math.\,124, No.\,3, 253--304 (2000).

\bibitem{mahnkopf} 
Mahnkopf, J.\,:
{\it Cohomology of arithmetic groups, parabolic subgroups and the special values of automorphic $L$-Functions on $GL(n)$.\,} 
Journal of the Institute of Mathematics of Jussieu 4, 553--637 (2005). 

\bibitem{Manin}
Manin, Y.\,:
{\it Periods of cusp forms and $p$-adic Hecke series.} 
(Russian) Mat.\,Sb.\,(N.\,S.\,) 92 (134), 378--€"401 (1973). 

\bibitem{ManinJacquet}
Manin, Y.\,:
{\it Non-Archimedean integration and $p$-adic Jacquet-Langlands $L$-functions.\,}
Uspehi.\,Mat.\,Nauk 31, No.\,1, 5--€"54 (1976). 

\bibitem{Mazur}
Mazur, B.\,, and Swinnerton--Dyer, P.\,:
{\it  Arithmetic of Weil curves.\,}
Invent.\,Math.\,25, 1--61 (1974). 
 
 \bibitem{MTT}
 Mazur, B.\,, Tate, J.\,, and Teitelbaum, J.\,:
 {\it On $p$-adic analogues of the conjectures of Birch and Swinnerton-Dyer.\,}
 Invent.\,Math.\,84, No.\,1, 1--48 (1986). 
 
\bibitem{Mok}
Mok,  C.\,P.\,:
{\it  The exceptional zero conjecture for Hilbert modular forms.\,}
 Compos.\, Math.\,145, No.\,1, 1--55 (2009). 
 
\bibitem{Panchiskin}
Panchiskin, A.\,:
{\it  Motives over totally real fields and $p$-adic $L$-functions.\,}
 Annals Institute  Fourier 44, No.\,4, 989--1023 (1991). 
 
\bibitem{PollackDuke} 
Pollack, R.\,:
{\it On the $p$-adic $L$-function of a modular form at a supersingular prime.\,}
Duke Math.\,J.\, 118, No.\,3, 523--558 (2003).

 \bibitem{raghuram-imrn}
Raghuram,  A.\,: 
{\it On the special values of certain Rankin--Selberg $L$-functions and applications to odd 
symmetric power $L$-functions of modular forms.\,}
IMRN, No.\,2, 334--372 (2010).

\bibitem{raghuram-preprint}
Raghuram, A.\,:
{\it Critical values of Rankin--Selberg $L$-functions for $\GL_n \times \GL_{n-1}$ and the symmetric cube $L$-functions for $\GL_2$.\,} 
To appear in Forum Math. 

\bibitem{raghuram-shahidi-aim} 
Raghuram, A.\, and Shahidi, F.\,, 
{\it Functoriality and special values of L-functions.} 
{\it Eisenstein series and Applications}, eds.\,W.\,T.\,Gan, S.\,Kudla, and Y.\,Tschinkel, 
Progress in Mathematics 258,  271--294 (2008).

\bibitem{Rohrlich}
Rohrlich, D.\,:
{\it Nonvanishing of $L$-functions for $\GL(2)$.\,}
Invent.\,Math.\,97,  381--€"403  (1989).

\bibitem{schmidt-inv}
Schmidt, C.-G.\,:
{\it $p$-adic measures attached to automorphic representations of $\GL(3)$.\,} 
Invent.\,Math.\,92, 
597--631 (1988).

\bibitem{schmidt-inv2} 
Schmidt, C.-G.\,: 
{\it Relative modular symbols and $p$-adic Rankin--Selberg convolutions.\,}
Invent.\,Math.\,112, 31--76  (1993). 

\bibitem{scholl}
Scholl, A.-J.\,:
{\it Motives for modular forms.}
Invent.\,Math.\,100, No.\,2, 419--430 (1990). 
 
\bibitem{spiess} 
Spiess, M.\,: 
{\it On special zeros of $p$-adic $L$-functions of Hilbert modular forms.}
Invent.\,Math.\,196, No.\,1, 69--138 (2014). 

\bibitem{sun} 
Sun, B.\,:
 {\it Cohomologically induced distinguished representations and a non-vanishing hypothesis for algebraicity of critical $L$-values.\,} 
 {\tt arXiv:1111.2636}, preprint (2011). 

\bibitem{Vishik}
Vishik, M.\,M.\,:
{\it Nonarchimedean measures associated with Dirichlet series.\,}
Mat.\,Sb.\,(N.\,S.\,) 99 (141), No. 2, 248--260 (1976). 
\end{thebibliography}
\end{document}